\crefname{theorem}{theorem}{theorems}
\Crefname{theorem}{Theorem}{Theorems}
\crefname{lemma}{lemma}{lemmas}
\Crefname{lemma}{Lemma}{Lemmas}
\crefname{proposition}{proposition}{propositions}
\Crefname{proposition}{Proposition}{Propositions}
\crefname{definition}{definition}{definitions}
\Crefname{definition}{Definition}{Definitions}
\crefname{remark}{remark}{remarks}
\Crefname{remark}{Remark}{Remarks}
\crefname{example}{example}{examples}
\Crefname{example}{Example}{Examples}
\crefname{corollary}{corollary}{corollaries}
\Crefname{corollary}{Corollary}{Corollaries}
\newcounter{Hequation}
\g@addto@macro\equation{\stepcounter{Hequation}}\makeatother
\newcounter{taggedeq}
\pretocmd{\equation}{\stepcounter{taggedeq}}{}{}
\newtheorem{theorem}{Theorem}[section]
\newtheorem{proposition}{Proposition}[section]
\newtheorem{corollary}{Corollary}[section]
\newtheorem{lemma}{Lemma}[section]
\title[Rates of convergence in alignment models]{Rates of convergence in long time asymptotics of an alignment model with symmetry breaking}
\author[A. Surin]{%
  \normalfont Alexandre Surin\\[6pt] \\
  \small
  Ceremade (CNRS UMR~n\textsuperscript{o}~7534),\\
  PSL University, Université Paris-Dauphine,\\
  Place de Lattre de Tassigny, 75775 Paris 16, France\\
 Email: \texttt{surin@ceremade.dauphine.fr}
}
\date{\today}
\def\@setauthors{%
  \begingroup
  \def\thanks{\protect\thanks@warning}%
  \trivlist
  \centering\footnotesize \@topsep30\p@\relax
  \advance\@topsep by -\baselineskip
  \item\relax
    \author@andify\authors
    \def\\{\protect\linebreak}%
    {\authors}%
  \endtrivlist
  \endgroup
}
\def\@settitle{%
  \begin{center}%
    \baselineskip14\p@\relax
    \normalfont\LARGE\@title
  \end{center}%
}
\begin{document}

\begin{abstract}
    We consider a nonlinear Fokker-Planck equation derived from a Cucker-Smale model for flocking with noise. There is a known phase transition depending on the noise between a regime with a unique stationary solution which is isotropic (symmetry) and a regime with a continuum of polarized stationary solutions (symmetry breaking). If the value of the noise is larger than the threshold value, the solution of the evolution equation converges to the unique radial stationary solution. This solution is linearly unstable in the symmetry-breaking range, while polarized stationary solutions attract all solutions with sufficiently low entropy. We prove that the convergence measured in a weighted $L^2$ norm occurs with an exponential rate and that the average speed also converges with exponential rate to a unique limit which determines a single polarized stationary solution.
\end{abstract}
\maketitle
\noindent\textbf{Keywords:} Collective behavior, flocking, non-linear Fokker--Planck equation, phase transition.

\noindent\textbf{AMS Subject Classification:} 35B40; 35P15; 35Q92.
 
  \section{Introduction.} In various models of mathematical biology and social sciences, the dynamics of a population of individuals without leader is often described by mean field interaction forces that depend on the parameters of the agents, for instance, their relative positions and velocities. These interaction rules are the basis for models of collective behavior and the description of emergent structures. There are two  well-known models. The Cucker–Smale model \cite{CuckerSmale2007}  describes agents whose velocities tend to align with the average velocity of their  neighbors, with an interaction strength that decreases with distance. The Vicsek model~\cite{Vicsek1995}, describes particles moving at a constant speed but the direction of the velocity is determined by the average orientation of the neighbors and by a stochastic component, typically a Brownian noise. This noise takes into account individual randomness and estimation errors. The model exhibits a noise-dependent phase transition, marking the emergence of global alignment as the noise intensity decreases. We study a variant of the Cucker–Smale model with Brownian noise and a self-propulsion force. This framework has been investigated in ~\cite{BarbaroCanizoDegondphasetransi,Li2021,Pareschinumerique}  and \cite{Tugaut2014}. We consider the spatially homogeneous case, in which the evolution of the density $f(t,v)$ of individuals is described by 
\begin{equation}
\label{PDE}
\frac{\partial f}{\partial t} = D \, \Delta f + \nabla \cdot \big[ \left(\nabla \psi_{\alpha}(v)+v - \mathbf{u}_{f}   \right) f\big]\quad \text{with} \quad \psi_{\alpha}(v) = \frac{\alpha}{4} \,|v|^4 - \frac{\alpha}{2}\,|v|^2.
\end{equation}

\noindent Here $t \geq 0$ denotes the time variable, $v \in \mathbb{R}^d $ is the velocity variable, $D>0$ is the intensity of the noise,~ $\nabla \cdot$ and ~$\Delta$ respectively denote the divergence and the Laplacian with respect to ~$v$, while ~$\alpha>0$ is a parameter  of the external potential
and ~$\mathbf{u}_{f} $ denotes the average velocity
\begin{align*}
    \mathbf{u}_{f}:= \frac{\int_{\mathbb{R}^d} v \, f(t,v)\,dv}{\int_{\mathbb{R}^d}f(t,v)\,dv}.
\end{align*}
In contrast with the classical Vicsek model, we do not require that ~$|v|$ is constant.
For any ~$\mathbf{u}\in \mathbb{R}^d$, we define a Gibbs state as 
\begin{align}
\label{Gibbsstate}
    G_\mathbf{u}(v):=\mathcal{Z}(\mathbf{u})^{-1} {e^{-\frac{1}{D}\left(\frac{1}{2}|v-\mathbf{u}|^2+ \psi_{\alpha}(v)\right)}}
\end{align}
where ~$\mathcal{Z}(\mathbf{u})$ is a renormalization constant. A stationary state is a Gibbs state with a specific value of the speed ~$|\mathbf{u}|$. In \cite{BarbaroCanizoDegondphasetransi} it was proved  that a phase transition occurs and the following refined result is taken from \cite{Li2021}.

\begin{proposition}[{\rm \cite[Theorem~1.1]{Li2021}}]
\label{prop0}
Let ~$D_*>0$ be the unique positive root of 
\[h(D):=\int_{0}^{\infty}\left(s^{d+1}-s^{d+3} \right)e^{-\frac{\psi_{\alpha}(v)}{D}-\frac{|v|^2}{2D}}\,dv.\] 
For $D<D_*,$ let  $r(D)>0$ be the unique positive root of 
\begin{align}
\label{H}
     {H}(R):= \int_{\mathbb{R}^d}{(v_1-R)\,e^{-\frac{1}{D}\left(\psi_{\alpha}(v)+\frac{|v|^2}{2}-v_1R\right)}}\, dv. 
\end{align}

\begin{itemize}
    \item If $D\geq D_*$, then $G_0$ is the unique stationary state. It is isotropic and stable.
    \item If $D<D_*$, then 
    $G_\mathbf{u}$ is a stationary state for any $u\in \mathcal{S}\cup \{0\}$ where \begin{align}
    \label{S}
        \mathcal{S}:=r(D)\mathbb{S}^{d-1}
    \end{align} and there are no other stationary states. Moreover, $G_0$ is unstable and the set of polarized states $\{G_\mathbf{u}\}_{\mathbf{u} \in \mathcal{S}}$  is stable.
    
\end{itemize}
\end{proposition}

\noindent For two nonnegative functions $f$ and $g$ of unit mass  we define the \emph{relative entropy} as  
\begin{align*}
    \mathcal{H}[f|g]:= \int_{\mathbb{R}^d} f\log\left(\frac{f}{g}\right)dv
\end{align*}
and the \emph{free energy} associated to  \Cref{PDE} as
\begin{align}
\label{freenergy}
    \mathcal{F}[f]= D\int_{\mathbb{R}^d} f \log f \,dv + \int_{\mathbb{R}^d} f \,\psi_{\alpha} \,dv +\frac{1}{2}\int_{\mathbb{R}^d} |v-\mathbf{u}_{f}|^2\,f\,dv.
\end{align}

Our first main result deals with the large time asymptotics of the solution and the identification of a limiting velocity.

\begin{theorem}{}
\label{thm2}
Let $D<D_*$. Let $f$ be a nonnegative solution to \Cref{PDE} with  initial \\ \mbox{datum ~$f_{\mathrm{\mathrm{in}} } \in L^1_+(\mathbb{R}^d)$} such that ~$\mathcal{F}[f_{\mathrm{\mathrm{in}}}]<\mathcal{F}[G_{0}]$ and ~$\| f_{\mathrm{in}}\|_{L^1(\mathbb{R}^d)}=1$. Then
   the average speed $\mathbf{u}_{f}$ converges as $t \rightarrow\infty$ to a unique limit $\mathbf{u}_{\infty} \in \mathcal{S}$ and for any $\beta>0$,
  \begin{align*}
\lim_{t\rightarrow \infty} t^{\beta}\left(\mathcal{H}\big[f(t,\cdot) \mid G_{\mathbf{u}_{\infty}}\big] + |\mathbf{u}_{f}(t) - \mathbf{u}_{\infty}|^2\right) = 0
\end{align*}

\end{theorem}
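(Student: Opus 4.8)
The plan is to treat \Cref{PDE} as a gradient flow of $\mathcal F$, combine a LaSalle‑type argument with a local analysis near the manifold of polarized equilibria, and quantify the final convergence through a Łojasiewicz–Simon gradient inequality.

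\textbf{Lyapunov structure and limit set.} I would first record that $\mathcal F$ is a strict Lyapunov functional: writing \Cref{PDE} as $\partial_t f=\nabla\cdot\big(f\,\nabla(D\log f+\psi_\alpha+\tfrac12|v-u_f|^2)\big)$ and observing that $D\log f+\psi_\alpha+\tfrac12|v-u_f|^2=D\log(f/G_{u_f})+\mathrm{const}$, one gets
\[
\tfrac{d}{dt}\mathcal F[f(t)]=-D^2\,I\big[f(t)\mid G_{u_f(t)}\big]\le 0,
\]
where $I[\cdot\mid\cdot]$ is the relative Fisher information, so that $\int_0^\infty I[f(t)\mid G_{u_f(t)}]\,dt<\infty$. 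Together with uniform‑in‑time a priori bounds (polynomial and exponential velocity moments, using that the confining drift behaves like $\alpha|v|^2v$ at infinity, plus parabolic regularity) this makes $\{f(t,\cdot)\}_{t\ge1}$ relatively compact in a weighted $L^2$ space and gives $\mathcal F[f(t)]\downarrow\ell\le\mathcal F[f_{\mathrm{ini}}]<\mathcal F[G_0]$. By LaSalle's invariance principle the (nonempty, compact, connected) $\omega$‑limit set consists of stationary states on which $\mathcal F=\ell$; since $\ell<\mathcal F[G_0]$ this excludes $G_0$, so by \Cref{prop0} it lies in $\mathcal M:=\{G_u:u\in\mathcal S\}$, all of whose elements have the same free energy by rotational invariance. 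Hence $\ell=\mathcal F[G_{r(D)}]=\inf\mathcal F$, $\operatorname{dist}(f(t),\mathcal M)\to0$ and $|u_f(t)|\to r(D)$. I would also record the identity $D\,\mathcal H[f\mid G_u]=\mathcal F[f]-\inf\mathcal F+\tfrac12|u_f-u|^2$ for $u\in\mathcal S$ (parallel‑axis theorem together with $D\log\mathcal Z(u)=-\mathcal F[G_u]$), which reduces the quantity in \Cref{thm2} to $\big(\mathcal F[f(t)]-\inf\mathcal F\big)+|u_f(t)-u_\infty|^2$.

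\textbf{Local analysis and convergence of the whole trajectory.} Near $\mathcal M$ I would linearize: the Hessian of $\mathcal F$ at $G_u$, $u\in\mathcal S$, is nonnegative with kernel exactly the tangent space $T_u\mathcal M$ (the rotational zero modes) and a spectral gap on its orthogonal complement — the coercivity input borrowed from the weighted‑$L^2$ analysis. From it I would derive a Łojasiewicz–Simon inequality $|\mathcal F[f]-\inf\mathcal F|^{1-\theta}\le C\,I[f\mid G_{u_f}]^{1/2}$ in a tubular neighbourhood of $\mathcal M$. The standard Łojasiewicz argument then yields $\int^\infty I[f\mid G_{u_f}]^{1/2}\,dt<\infty$, so $f(t)$ converges in the energy norm to a single stationary state $G_{u_\infty}$, forcing $u_f(t)\to u_\infty$ for a unique $u_\infty\in\mathcal S$, and gives $\mathcal F[f(t)]-\inf\mathcal F\le C_\beta t^{-\beta}$ for every $\beta$. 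For the radial/tangential split of $u_f$ I would use $\dot u_f=-\alpha\,\mathbb E_f[(|v|^2-1)v]$: this equals $-\alpha g(|u_f|)\widehat{u_f}$ up to an $O(\operatorname{dist}(f,\mathcal M))$ error, and $g(r(D))=0$ because $G_u$, $u\in\mathcal S$, is stationary; hence $|u_f-u_\infty|^2\le C(\mathcal F[f]-\inf\mathcal F)+\big(\int_t^\infty|\dot u_f|\big)^2$ with both terms slaved to $\mathcal F[f]-\inf\mathcal F$, and feeding this into the identity above closes the proof.

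\textbf{Main obstacle.} The delicate point is the tangential (rotational) direction: because $T_u\mathcal M$ is a space of zero modes, the linearized flow does not damp $\widehat{u_f}$, so uniqueness of $u_\infty$ and its rate must come from the nonlinear structure — concretely from showing $\dot u_f$ carries essentially no component transverse to $\widehat{u_f}$ and then closing a bootstrap, with constants made uniform via the moment and regularity bounds above. It is precisely the loss incurred here — equivalently, the $\varepsilon$‑loss in transferring the energy‑norm coercivity to the relative‑entropy functional, which only gives the Łojasiewicz exponent up to $\theta<\tfrac12$ — that downgrades the expected exponential decay to the faster‑than‑any‑polynomial statement of \Cref{thm2}; the sharp exponential rate is obtained separately through the linear weighted‑$L^2$ theory.
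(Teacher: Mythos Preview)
Your outline is sound and the identity $D\,\mathcal H[f\mid G_u]=\mathcal F[f]-\mathcal F_*+\tfrac12|u_f-u|^2$ for $u\in\mathcal S$ is exactly what closes the argument. The paper, however, takes a more elementary route that avoids the infinite-dimensional {\L}ojasiewicz--Simon machinery entirely. Its key observation is the splitting $\mathcal F[f]-\mathcal F_*=D\,\mathcal H[f\mid G_{u_f}]+\big(\mathcal V(u_f)-\mathcal V_*\big)$ with $\mathcal V(u)=-D\log\mathcal Z(u)$ a smooth radial function on~$\mathbb R^d$: a Polyak--{\L}ojasiewicz inequality is proved only for this \emph{finite-dimensional} piece (a Taylor expansion, once one checks $\mathcal V''(r(D))>0$), and then $\mathcal V(u_f)-\mathcal V_*\le\tfrac{1}{2\mu}|u_f-u_{G_{u_f}}|^2\le C\,\mathcal H[f\mid G_{u_f}]^{1/p}$ via H\"older and Csisz\'ar--Kullback. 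Combined with the log-Sobolev inequality this gives $\tfrac{d}{dt}(\mathcal F-\mathcal F_*)\le -c\,(\mathcal F-\mathcal F_*)^p$ for every $p>1$, so ``every $\beta$'' falls out of the free H\"older exponent rather than from pushing a {\L}ojasiewicz exponent $\theta$ toward $\tfrac12$. For the uniqueness of $u_\infty$ the paper does essentially what you sketch, but concretely: it compares $f$ with $G_{u_*}$ where $u_*(t)=r(D)\,u_f(t)/|u_f(t)|$ is the radial projection onto $\mathcal S$, bounds $|\dot u_f|\le\alpha|u_f-u_*|+C\|f-G_{u_*}\|_{L^1}^{1/p}$, and controls both terms by $(\mathcal F-\mathcal F_*)^{1/(2p)}$, so $|\dot u_f|$ is integrable with the same arbitrary polynomial decay. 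What your approach buys is a unified abstract gradient-flow picture; what the paper's buys is that the LaSalle step is done by Barbalat's lemma from moment bounds alone and the whole proof stays in the $L^1$/entropy framework --- note that the weighted-$L^2$ compactness and Hessian spectral gap you invoke live naturally in $L^2(G_u^{-1}\,dv)$, which is the \emph{stronger} hypothesis of \Cref{thm3}, not of \Cref{thm2}, so making your route rigorous under the bare $L^1$ assumption would require additional work.
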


It was known from \cite{Li2021} that $\lim_{t\rightarrow \infty} \inf_{\mathbf{u}\in \mathcal{S}}{\mathcal{H}\big[ f(t,.)|G_\mathbf{u}\big]}=0$ but the existence of a unique limiting Gibbs state and the convergence of $\mathbf{u}_{f}(t)$ as $t\rightarrow \infty$ are new, thus answering a long standing open question. Under additional conditions on the initial datum, we obtain an exponential rate of convergence.

\begin{theorem}{}
\label{thm3}
There is a positive constant $\beta$ depending only on $\alpha$ and $D$ such that under the assumptions of \Cref{thm2}, if $f_{\mathrm{in}} \in \bigcap_{u \in \mathcal{S}} L^2(\mathbb{R}^d,G_\mathbf{u}^{-1}dv)$, then
     \begin{align*}
         \mathcal{H}\big[f(t,.)|G_{\mathbf{u}_{\infty}}\big] +|\mathbf{u}_{f}-\mathbf{u}_{\infty}|^2 =O(e^{-\beta t}) \text{ as } t\rightarrow +\infty.
     \end{align*}
\end{theorem}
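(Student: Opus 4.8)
The plan is to combine an entropy-entropy production (or rather a weighted $L^2$) argument with the convergence result from Theorem 1.2 to upgrade the polynomial decay to an exponential one. The key structural fact is that, by Theorem 1.2, we already know $u_f(t) \to u_\infty \in \mathcal{S}$ and $\mathcal{H}[f(t,\cdot)\mid G_{u_\infty}] \to 0$ faster than any polynomial; in particular, for $t$ large, $f(t,\cdot)$ lies in a small neighbourhood of the polarized stationary state $G_{u_\infty}$, where one can hope to linearize. First I would set $h = f/G_{u_\infty} - 1$ and work with the weighted $L^2$ norm $\|f(t,\cdot) - G_{u_\infty}\|_{L^2(G_{u_\infty}^{-1}dv)}^2 = \int h^2 G_{u_\infty}\,dv$; the assumption $f_{\mathrm{ini}} \in \bigcap_{u\in\mathcal{S}} L^2(\mathbb{R}^d, G_u^{-1}dv)$ guarantees this quantity is finite initially, and a propagation-of-moments / regularity estimate for \Cref{PDE} shows it stays finite and, crucially, stays bounded for all time. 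By Csiszár–Kullback–Pinsker type comparisons (in this Gaussian-like weighted setting, $\mathcal{H}[f\mid G_{u_\infty}] \lesssim \|f - G_{u_\infty}\|^2_{L^2(G_{u_\infty}^{-1})}$ once $f$ is close to $G_{u_\infty}$, with the reverse also controlled), it suffices to prove exponential decay of this $L^2$ quantity together with $|u_f - u_\infty|^2$.

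The core of the argument is a differential inequality. Differentiating $\frac12\frac{d}{dt}\int h^2 G_{u_\infty}\,dv$ along the flow \Cref{PDE} produces a linearized dissipation term of the form $-D\int |\nabla h|^2 G_{u_\infty}\,dv$ plus nonlinear corrections coming from (i) the quartic potential $\psi_\alpha$, (ii) the nonlocal coupling $u_f$ and the mismatch $u_f - u_\infty$, and (iii) genuinely quadratic-in-$h$ terms. The linearized operator $\mathcal{L}_{u_\infty} = -D\Delta + (\nabla\psi_\alpha(v) + v - u_\infty)\cdot\nabla$ is self-adjoint in $L^2(G_{u_\infty}\,dv)$, and its spectral properties are exactly what Proposition 1.1 encodes: in the symmetry-breaking regime $D<D_*$ the zero eigenvalue of $\mathcal{L}_{u_\infty}$ has a $(d-1)$-dimensional kernel corresponding to infinitesimal rotations of $u_\infty$ within the sphere $\mathcal{S}$ (the neutral/tangential directions), while the remaining spectrum has a strictly positive gap $\lambda>0$. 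This is where the stability of the polarized set from Proposition 1.1 gets used quantitatively: one needs a spectral-gap inequality $\int|\nabla h|^2 G_{u_\infty} \geq \lambda \int h^2 G_{u_\infty}$ valid on the orthogonal complement of the kernel, plus a mechanism that kills the kernel component. The kernel direction is precisely tracked by $u_f - u_\infty$: projecting the equation onto the $d$ coordinate functions $v_i$ (i.e. looking at the ODE satisfied by $u_f(t)$) and using that $G_{u_\infty}$ is stationary shows that $\frac{d}{dt}(u_f - u_\infty)$ is, to leading order, a contraction toward $\mathcal{S}$ transversally and neutral tangentially — but the tangential drift is itself quadratically small, so that $|u_f - u_\infty|$ converges and the already-established convergence $u_f \to u_\infty$ pins the limit. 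I would make this rigorous by writing a coupled system of inequalities for the pair $\big(E(t), P(t)\big) := \big(\int h^2 G_{u_\infty}\,dv,\ |u_f(t)-u_\infty|^2\big)$ of the schematic form $\dot E \le -2\lambda E + C\,P + C\,(E^{3/2} + E\sqrt{P})$ and $\dot P \le -c\,P_\perp + C\,E$, where $\lambda,c>0$ come from the spectral gap and the transversal contraction, and the superlinear remainder terms are absorbed for $t\ge t_0$ large by Theorem 1.2 (which makes $E(t), P(t)$ as small as we like). A Gronwall argument on this system then yields $E(t) + P(t) = O(e^{-\beta t})$ with $\beta = \beta(\alpha,D) \in (0, \min(\lambda,c))$.

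The main obstacle is the treatment of the neutral directions: because the kernel of $\mathcal{L}_{u_\infty}$ is nontrivial, a naive Lyapunov functional $E(t)$ alone cannot decay exponentially, and one must genuinely exploit that the zero mode corresponds to motion along $\mathcal{S}$, i.e. to a pure reparametrization $u_\infty \mapsto u'_\infty$, and show that this motion stops exponentially fast. Concretely, the delicate point is to establish that the tangential component of $u_f - u_\infty$ (or equivalently the $\mathcal{S}$-valued "center" of $f(t,\cdot)$) is a Cauchy sequence with exponential rate — this requires an estimate showing the tangential drift speed is dominated by $E(t)^{1/2}$ or by the transversal part, so that it is summable once $E$ decays. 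A secondary technical difficulty is the quartic growth of $\psi_\alpha$: controlling the nonlinear terms $\int (\nabla\psi_\alpha\cdot\nabla h)\,h\,G_{u_\infty}$ and higher moments requires weighted Poincaré/moment bounds with the Gaussian-times-Gibbs weight $G_{u_\infty}$, and uniform-in-time higher moment control on $f$, which I would obtain by a standard but somewhat technical energy estimate on \Cref{PDE} (propagation of $\int |v|^k f\,dv$ and of the $L^2(G_u^{-1})$ norm), using that these norms are finite at $t=0$ under the theorem's hypothesis and stay bounded by comparison with the free-energy dissipation from \eqref{freenergy}.
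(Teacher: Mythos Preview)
Your diagnosis of the main difficulty---the $(d-1)$-dimensional kernel of the linearization coming from rotations in $\mathcal{S}$---is correct, but the proposed treatment in the fixed frame around $G_{u_\infty}$ has a circularity you do not close. The inequality $\dot E \le -2\lambda E + CP + \ldots$ cannot hold as written: the spectral gap only dissipates the component of $h$ orthogonal to the kernel, so at best $\dot E \le -2\lambda E_\perp + \ldots$, while the kernel part $E_\parallel$ (which is essentially the tangential component $P_\parallel$ of $|u_f-u_\infty|^2$) carries no linear damping. You then want to bound the tangential drift by $E^{1/2}$ and integrate, but that requires the \emph{full} $E$ to already decay exponentially, and $E$ contains $E_\parallel$. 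The paper states this obstruction explicitly: the linearized operator around $G_{u_\infty}$ ``does not have a spectral gap if $u_f$ converges to $u_\infty$ tangentially to $\mathcal{S}$''; quantitatively, this is the degeneracy in \Cref{lemmeXingyu}, whose coercivity constant carries the factor $(v_g\cdot u)^2/(|v_g|^2|u|^2)$ and vanishes when $v_g\perp u$. (A side remark: your $\mathcal{L}_{u_\infty}$ omits the nonlocal $v_g$-term; the correct linearization \eqref{Lu} is what makes the quadratic form $Q_{1,u}$, not the plain weighted $L^2$ norm, the natural Lyapunov functional.)

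The key idea you are missing is a moving-frame (modulation) argument: linearize around $G_{u_*(t)}$ with $u_*(t)=r(D)\,u_f(t)/|u_f(t)|$ the projection of $u_f$ onto $\mathcal{S}$, rather than around the fixed $G_{u_\infty}$. In this frame $v_g=(u_f-u_*)/D$ is colinear with $u_*$ by construction, so the coercivity $Q_{2,u_*}[g]\ge \beta(D)^2 Q_{1,u_*}[g]$ holds with a \emph{uniform} constant and the kernel degeneracy disappears entirely. The price is an extra term $-(v\cdot u_*')(1+g)/D$ in the evolution of $g$, coming from the time-dependence of the reference state; but $|u_*'|\lesssim Q_{1,u_*}[g]^{1/2}$ (\Cref{contoleJ_*}), so this is an absorbable nonlinear remainder. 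One then obtains $\frac{d}{dt}Q_{1,u_*}[g]\le -2\beta(D)^2 Q_{1,u_*}[g]\big(1-K_D Q_{1,u_*}[g]^{1/2}\big)$, integrates this to exponential decay of $Q_{1,u_*}[g]$, and only afterward converts to $\mathcal{H}[f|G_{u_\infty}]+|u_f-u_\infty|^2$ by integrating $|u_*'|$ in time. This choice of reference state is precisely the device that breaks your circularity.
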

The additional assumption is slightly stronger than asking $f_{\mathrm{in}}$ to belong to $ L^2(\mathbb{R}^d,G_\mathbf{u}^{-1}dv)$ for only one~ $\mathbf{u}$. Indeed, the function 
 $v \mapsto \sqrt{G_\mathbf{u}(v) /\left(1 + |v|^{d+1}\right)^{}}$ belongs to $L^2(\mathbb{R}^d, G_\mathbf{u}^{-1}\, \mathrm{d}v)$ but not to~$ L^2(\mathbb{R}^d, G_w^{-1}\, \mathrm{d}v)$ if $w \neq \mathbf{u}$.
The proofs of \Cref{thm2} and \Cref{thm3} are based on a detailed analysis of the free energy. The exponential convergence is obtained by considering the projection of $\mathbf{u}_{f}(t)$ onto $\mathcal{S}$.

\indent
We refer to \cite{BarbaroCanizoDegondphasetransi} for an introduction to \eqref{PDE}. The Vicsek model \cite{Vicsek1995} is the limit of \eqref{PDE} as $\alpha \rightarrow \infty$ according to ~\cite{bostan2013asymptotic}. Models with a large class of interaction potentials but a strong assumption of uniform convexity were investigated in ~\cite{carrillo2003kinetic}. Their results do not apply to ~\Cref{PDE}. The first large time asymptotic result in the non-critical isotropic case ~$D>D_*$ was established in ~\cite{Li2021}. In this paper, Li proved the convergence to the unique stationary state ~$G_0$ with exponential decay rate. We give a different proof  which includes the study of the critical case ~$D=D_*$ and the more general initial datum of~ \Cref{thm2}.
 We adapt techniques from ~\cite{Frouvelle2021,FrouvelleLIU2012} for the Vicsek model.
 
\indent
This paper is organized as follows. Definitions and preliminary results are collected in~ \Cref{sec:preliminary}. In ~\Cref{sec:sec1}, we prove the convergence of the free energy (\Cref{prop1}). ~\Cref{thm2} and ~\Cref{thm3} are proved respectively in  ~\Cref{sec:sec4} and ~\Cref{sec:: section 5}.

\begin{section}{Preliminary results.}
\label{sec:preliminary}

\begin{subsection}{Existence and uniqueness}
    We provide here some already known results and references. Since \Cref{PDE} has a gradient flow structure, a natural approach is to consider a discrete gradient descent scheme in the Wasserstein space, as introduced by Jordan, Kinderlehrer, and Otto in \cite{JordanOttoKinder}. The drawback of this method is that it does not give uniqueness since the free energy is not convex. The existence is detailed in  Chapter~8 of \cite{Villaniopttranspbook}. The proof only requires an initial datum in $L^1(\mathbb{R}^d, \,dx)$ with finite entropy and second moment. In \cite{bolley2010stochastic}, Bolley, Cañizo and Carrillo proved the existence and uniqueness for a large class of kinetic PDE's which covers \eqref{PDE}. However, their method requires some additional  integrability conditions on the initial datum, see \cite{bolley2010stochastic} Theorem 1.2. In \cite{bolley2010stochastic,Villaniopttranspbook}, the solutions are measure-valued in the sense of distributions at $t=0$, and admit a density with respect to the Lebesgue measure for any $t>0$. Here, we assume that $f_{\mathrm{in}} \in L^1(\mathbb{R}^d)$ is nonnegative with unit mass and has finite free energy and second moment so that a solution exists. We use the same notation as in \cite{Li2021}.
\end{subsection}

\begin{subsection}{Fisher information.}
    The free energy defined by \eqref{freenergy} is a Lyapunov function in the sense that for a regular solution $f$ to \Cref{PDE}, we have:
\begin{align}
\label{fisher}
    \frac{d}{dt} \mathcal{F}[f]=-\,\mathcal{I}[f(t,.)]
\end{align}
where 
\begin{align*}
    \mathcal{I}[f(t,.)]=D^2 \int_{\mathbb{R}^d} \left|\nabla \log\left(\frac{f}{G_{\mathbf{u}_{f}}}\right)\right|^2f \, dv
\end{align*}
is the Fisher information. As a consequence, $t\mapsto \mathcal{F}[f(t,.)]$ is nonincreasing for a solution $f$ of \eqref{PDE}. 
\end{subsection}

\begin{subsection}{The compatibility equation.}
A stationary solution $f$ of \eqref{PDE} satisfies $\mathcal{I}[f]=0$. This \\ implies  $f=G_{\mathbf{u}_{f}}$, that is, $f=G_\mathbf{u}$ with the condition of compatibility
\begin{align}
\label{compatibilityequation}
    \mathbf{H}(\mathbf{u}):=\mathbf{u}-\mathbf{u}_{G_\mathbf{u}}=0.
\end{align}  Conversely, by substituting  $G_\mathbf{u}$ into \Cref{PDE}, one can check that~$G_\mathbf{u}$ is a stationary state \\ if~\hbox{$\mathbf{H}(\mathbf{u})=0$.} 
We can write for $\mathbf{u} \neq 0$ \begin{align}
\label{relationimp}
    \mathbf{H}(\mathbf{u})=-\frac{H(|\mathbf{u}|)}{K(|\mathbf{u}|)} \frac{\mathbf{u}}{|\mathbf{u}|}
\end{align}
where $H$ and $K$ are two real functions defined by \eqref{H} and 
\begin{align}
\label{K}
    {K}(R)= \int_{\mathbb{R}^d}{e^{-\frac{1}{D}\left(\frac{1-\alpha}{2}|v|^2+ \frac{\alpha}{4} |v|^4-v_1R\right)}}\ dv.
\end{align}
where $v_1$ denotes the first coordinate of $v\in (v_1,v_2,...v_d) \in \mathbb{R}^d$.
Hence, the stationary states of \Cref{PDE} are characterized by the zeros of $H$. The proof of \Cref{prop0} amounts to finding the zeros of $H$ and we refer to \cite{Li2021} for more details. 
\end{subsection}

\begin{subsection}{A moment lemma.}
For initial conditions with finite free energy, moments of all orders are finite and  uniformly bounded for any $t>0$.
    \begin{lemma}
    \label{lemmedesmoments}
     Let $f$ be the solution to \Cref{PDE} with initial datum ~$f_{\mathrm{in}} \in L^1_+(\mathbb{R}^d)$ such \mbox{that~$\mathcal{F}[f_{\mathrm{in}}]$} is finite and  $\| f_{\mathrm{in}}\|_{L^1(\mathbb{R}^d)}=1$ . Then for all  $k\in \mathbb{N}$, \begin{align*}
         \sup_{t>0}M_k(t)<+\infty \,\,\,\text{where} \,M_k(t):=\int_{\mathbb{R}^d} |v|^kf(t,v)\,dv.
     \end{align*}

\end{lemma}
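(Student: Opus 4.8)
The plan is to derive a differential inequality for $M_k(t)$ (or more conveniently for $\int (1+|v|^2)^{k/2} f\,dv$) directly from \Cref{PDE}, and then to close it using the fact that the free energy, and hence the entropy together with low-order moments, is controlled. First I would note that it suffices to treat even moments $M_{2m}(t)$, since odd moments are bounded by even ones via Cauchy–Schwarz; so I fix $m\in\mathbb{N}$ and compute, for a regular solution,
\begin{align*}
\frac{d}{dt}\int_{\mathbb{R}^d}|v|^{2m}f\,dv
= \int_{\mathbb{R}^d}\Big( D\,\Delta(|v|^{2m}) - \nabla(|v|^{2m})\cdot\big(\nabla\psi_\alpha(v)+v-u_f\big)\Big) f\,dv,
\end{align*}
after integrating by parts (the boundary terms vanish because, once we know moments are finite, the Gaussian-type tails of $f$ make all these integrals convergent — a point that should really be handled by first working with a truncated weight $\chi_R(|v|)|v|^{2m}$ and letting $R\to\infty$, or by a standard approximation argument referring to the existence theory cited above).

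The next step is to identify the sign of the dominant terms. We have $\Delta(|v|^{2m}) = 2m(2m+d-2)|v|^{2m-2}$, which is a lower-order (in $|v|$) contribution, and $\nabla(|v|^{2m}) = 2m|v|^{2m-2}v$, so the drift term produces
\begin{align*}
-\,2m\int_{\mathbb{R}^d}|v|^{2m-2}\Big(\alpha|v|^4 - \alpha|v|^2 + |v|^2 - u_f\cdot v\Big) f\,dv
= -\,2m\,\alpha\int_{\mathbb{R}^d}|v|^{2m+2}f\,dv + (\text{lower order}).
\end{align*}
The crucial gain is the coefficient $-2m\alpha<0$ in front of $M_{2m+2}$. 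Collecting terms and using $|u_f|\le C$ (which follows from \Cref{lemmedesmoments} applied with $k=1$, or bootstrapped from the free energy bound on the second moment), together with the elementary Young-type inequality $|v|^{2m+1}\le \varepsilon|v|^{2m+2}+C_\varepsilon|v|^{2m}$ and $|v|^{2m}\le \varepsilon|v|^{2m+2}+C_\varepsilon$, I obtain
\begin{align*}
\frac{d}{dt}M_{2m}(t) \le -c_1\,M_{2m+2}(t) + c_2\,M_{2m}(t) + c_3
\end{align*}
for positive constants $c_1,c_2,c_3$ depending on $m,\alpha,D$ and $\sup_t|u_f|$. Since $M_{2m+2}\ge c\,M_{2m}^{(m+1)/m} - C$ by Jensen's inequality (interpolating with the zeroth moment, which is $1$), the right-hand side is strictly negative once $M_{2m}$ is large, which yields a uniform-in-time bound $\sup_{t>0}M_{2m}(t)<\infty$ by a comparison/ODE argument, proceeding inductively on $m$ so that at each stage $M_{2m}$ is already known finite and the lower-order terms are controlled.

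The main obstacle is not the formal computation but making it rigorous: a priori one does not know the moments are finite, so one cannot integrate by parts freely, and the evolution of $M_{2m}$ is only meaningful once $M_{2m+2}$ is known to be finite for a.e.\ $t$. The clean way around this is to run the argument with the cut-off weight $\varphi_R(v):=|v|^{2m}\theta(|v|/R)$ for a smooth compactly supported $\theta$, derive the inequality with $R$-independent constants (the $-c_1$ term survives the cut-off with the right sign for the error terms controlled by $\partial\theta$ being supported where $|v|\sim R$), pass to a Gronwall bound uniform in $R$, and only then let $R\to\infty$ using monotone convergence; the base case is the finiteness of $M_2$, which is part of the standing hypothesis that $\mathcal{F}[f_{\mathrm{ini}}]$ is finite (indeed $\mathcal{F}[f]\le\mathcal{F}[f_{\mathrm{ini}}]$ controls $\int|v|^4 f$ from below modulo the entropy, and the entropy is bounded below by a negative multiple of the second moment via the standard Gibbs inequality $\int f\log f \ge -C - \tfrac{1}{4D}\int|v|^2 f$). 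I would also remark that the bound is in fact uniform over initial data with $\mathcal{F}[f_{\mathrm{ini}}]$ and $M_2(0)$ bounded, which is what later sections need.
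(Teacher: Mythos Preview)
Your proposal is correct and follows essentially the same strategy as the paper: control $M_4$ (and hence all lower moments) directly from the free energy bound, then propagate to higher moments via the differential identity for $M_k$ obtained from \eqref{PDE}. The paper invokes the explicit inequality \eqref{bddbelow} from \cite{Li2021} for the base case, which is exactly the entropy/Gibbs argument you sketch.

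The one genuine, if minor, difference is in how the moment ODE is closed. You keep the dominant negative term $-c_1 M_{2m+2}$ and convert it into superlinear damping on $M_{2m}$ via Jensen, $M_{2m+2}\ge c\,M_{2m}^{(m+1)/m}-C$. The paper instead discards the $M_{k+2}$ contribution altogether using the pointwise bound $|v|^k-|v|^{k+2}\le 1$, so that $\alpha k\int(|v|^k-|v|^{k+2})f\,dv\le \alpha k$, and then relies on the linear term $-kM_k$ (coming from the $+v$ in the drift) for the Gr\"onwall closure. Both routes give a uniform bound; the paper's is slightly shorter, while yours would still work even if the linear confinement term were absent. Your cut-off discussion is also more careful than the paper's, which handles finiteness of $M_m$ for a.e.\ $t$ by the informal remark that otherwise $\frac{d}{dt}M_{m-2}=-\infty$.
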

\begin{proof}
    We learn from Proposition~4.2 in \cite{Li2021} that:
    \begin{align}
    \label{bddbelow}
        \mathcal{F}[f] \geq -\frac{d}{2}\log(2\pi)\,D + \frac{\alpha}{4} \int_{\mathbb{R}^d}f\,|v|^4\,dv - \frac{D+\alpha}{2} \left( \int_{\mathbb{R}^d}f\,|v|^4\,dv\right)^\frac{1}{2}.
    \end{align}
Moreover, if $f$ solves \eqref{PDE}, then $\mathcal{F}[f(t,.)]\leq \mathcal{F}[f_{\mathrm{in}}] $ for all $t\geq 0$ and, as a consequence, $$\sup_{t>0}M_4(t)<+\infty.$$ Since $\lVert f(t,.)\rVert_{L^1(\mathbb{R}^d)}=1$ for all $t \geq 0$, we also have for $k\in [0,4]$ that $\sup_{t>0}M_k(t)<+\infty$ by Hölder's inequality.  Let $m \geq 5$. Suppose that for $k \leq m-1$, we have $\sup_{t>0}M_k(t)<+\infty$. Differentiating $M_k$, we obtain:
 \begin{align*}
 \frac{\mathrm{d}}{\mathrm{d}t} M_k&= Dk\,(d+k-2)\,M_{k-2} -k \,M_k +k\,\left(\int_{\mathbb{R}^d}|v|^{k-2}vf\, dv\right) \cdot \mathbf{u}_{f}  + \alpha\, k \int_{\mathbb{R}^d} (|v|^k-|v|^{k+2})f\,dv.
 \end{align*}
 Using this relation with $k=m-2$, we obtain that the moment of order $m$ is finite for a.e.~$t>0$ (else the derivative of the moment of order $m-2$ would be $-\infty$). Now using $|v|^k-|v|^{k+2} \leq 0 $ for ~$|v|\geq 1$ and ~$|v|^k-|v|^{k+2} \leq 1$ for ~$|v|\leq 1$, we obtain:
\[
\frac{\mathrm{d}}{\mathrm{d}t} M_k \leq D\,(d+k-2)M_{k-2} -k\, M_k +k \left(\int_{\mathbb{R}^d}|v|^{k-2}vf \,dv\right) \cdot \mathbf{u}_{f}  + \alpha \,k.
\]
 Using this inequality with $k=m$, we obtain that $\sup_{t>0}M_m(t) <+\infty$.
\end{proof}
\end{subsection}

\begin{subsection}{Properties of the free energy}

 The free energy is bounded from below in \( L^1(\mathbb{R}^d, (1 + |v|^4)\, \mathrm{d}v) \), thanks to~\eqref{bddbelow}. We know from Corollary~4.1 of~\cite{Li2021} that:
\[\mathcal{F}_*:=
\inf_{f \in L^1(\mathbb{R}^d, (1 + |v|^4)\, \mathrm{d}v)} \mathcal{F}[f] =
\begin{cases}
\mathcal{F}[G_0] & \text{if } D \geq D_* ,\\
\mathcal{F}[G_\mathbf{u}] \text{ with } |u| = r(D) & \text{if } D < D_*.
\end{cases}
\]
The value of \( \mathcal{F}[G_\mathbf{u}] \) does not depend on \(\text{ \( u \in  \mathcal{S} \) }\) defined by \eqref{S} since the free energy is invariant under rotations. 
Let us state here some useful identities. By definition, the normalized constant of a Gibbs  state is 
\begin{align}
\label{normalization}
    \mathcal{Z}(\mathbf{u}):=\int_{\mathbb{R}^d}e^{-\frac{1}{D}\left(\frac{1}{2}|v-\mathbf{u}|^2+ \psi_{\alpha}(v)\right)}\,dv.
\end{align}
With a direct computation, we obtain for all $\mathbf{u}\in \mathbb{R}^d$,
   \begin{align}
   \label{eqbase}
          \mathcal{F}[f]=\,D\,\mathcal{H}[f|G_\mathbf{u}]-\frac{1}{2}\,|\mathbf{u}-\mathbf{u}_{f}|^2 - D \log(\mathcal{Z}(\mathbf{u}))\,. 
      \end{align}
First using \eqref{eqbase} with $\mathbf{u}=\mathbf{u}_{f}$ and then with $f=G_\mathbf{u}$, we obtain thanks to 
\eqref{compatibilityequation}  that
\begin{align}
\label{equtile}
     \mathcal{F}[f]-\mathcal{F}_*= D \,\mathcal{H}[f|G_{\mathbf{u}_{f}}] + \mathcal{V}(\mathbf{u}_{f})-\mathcal{V}_*
\end{align}
where
\begin{align}
\label{V}
    \mathcal{V}(\mathbf{u}):=-\,D\log(\mathcal{Z}(\mathbf{u}))
\end{align}
is a radial function and $\mathcal{V}_*$ denotes the value of $\mathcal{V}$ on $\mathcal{S}$. This expression of the free energy  is crucial for proving \Cref{prop1}. Taking $\mathbf{u}\in \mathcal{S}$ in \Cref{eqbase}, we obtain
    \begin{align}
    \label{eq22}
          \mathcal{F}[f]-\mathcal{F}_*= D\,\mathcal{H}[f|G_\mathbf{u}]-\frac{1}{2}|\mathbf{u}_{f}-\mathbf{u}|^2.
    \end{align}

\end{subsection}

\subsection{Log-Sobolev and Poincaré inequality}
Let us state two inequalities which generalize the classical Gaussian Poincaré and Log-Sobolev inequalities.  
Since~ $\phi_\mathbf{u}(v):=\frac{1}{2}|v-\mathbf{u}|^2+ \frac{\alpha}{4} |v|^4 -\frac{\alpha}{2}|v|^2$ is a bounded perturbation of a uniformly convex potential, it follows from the \emph{carré du champ} method of ~\cite{bakry1985diffusions} and the Holley-Strook perturbation lemma ~\cite{holley1987logarithmic} that the Log-Sobolev and Poincaré inequalities apply with the measure $d\mu=G_\mathbf{u} \,dv$.
\\
\textbf{Poincaré inequality}: For all~ $\mathbf{u}\in \mathbb{R}^d$ there exists ~$\Lambda_\mathbf{u}>0$ such that for all ~$g\in H^1(\mathbb{R}^d,G_\mathbf{u}\,dv)$ satisfying~$\int_{\mathbb{R}^d}g\,G_\mathbf{u} \,dv=0$, there holds 
\begin{equation}
\label{Poincaréinequality}
\tag{P}
\int_{\mathbb{R}^d} |\nabla g|^2 G_\mathbf{u}\, dv \geq \Lambda_\mathbf{u} \int_{\mathbb{R}^d} g^2 G_\mathbf{u}\, dv.
\end{equation}
Since we will only use the Poincaré inequality with~ $\mathbf{u} \in \mathcal{S}$ we will denote by~ $\Lambda(D)$ the common value of ~$\Lambda_\mathbf{u}$ for all ~$\mathbf{u} \in \mathcal{S}$.\\
\textbf{Log-Sobolev inequality}: For all~ $\mathbf{u} \in \mathbb{R}^d$, there exists ~$\mathcal{K}_\mathbf{u}>0$ such that for all nonnegative function of unit mass ~$f$ that satisfy~$\sqrt{fG_\mathbf{u}^{-1}}\in H^1(\mathbb{R}^d,G_\mathbf{u}dv)$ there holds
\begin{equation}
\label{log_sob}
\tag{LSI}
\int_{\mathbb{R}^d} \left|\nabla \log\left( \frac{f}{G_\mathbf{u}}\right)\right|^2 f\, dv \geq \mathcal{K}_\mathbf{u} \int_{\mathbb{R}^d} \log \left(\frac{f}{G_\mathbf{u}}\right) f\, dv.
\end{equation}
In the following, we will use the Log-Sobolev inequality with ~$\mathbf{u}=\mathbf{u}_{f}$ in a neighborhood of~ $\mathcal{S}$. \\
Let  ~$\mathcal{K}_{\epsilon}=\inf_{\mathbf{u} \in \mathbb{R}^d\,: \,\mathrm{dist}(\mathbf{u},\mathcal{S})\leq \epsilon} \mathcal{K}_{\mathbf{u}}>0$ where $\mathrm{dist}(\mathbf{u},\mathcal{S})=\min_{v\in \mathcal{S}}|\mathbf{u}-v|$.

\begin{subsection}{Convergence of the norm of the average speed.}
\label{subsec:convavspeed}
The following result is taken from \cite{Li2021}. 
    \begin{lemma}
\label{lemmejfvois} Let $D>0$.
     Let $f$ be the solution to \Cref{PDE} with initial datum ~$f_{\mathrm{in}} \in L^1_+(\mathbb{R}^d)$ such \mbox{that~$\mathcal{F}[f_{\mathrm{in}}]<+\infty$} and  $\| f_{\mathrm{in}}\|_{L^1(\mathbb{R}^d)}=1$. 
     \\
\begin{itemize}
    \item      If $D\geq D_*$ we have
 \begin{align*}
         \lim_{t \rightarrow{\infty}}\mathbf{u}_{f}(t)=0\,.
         \end{align*}

\item If $D<D_*$ and $\mathcal{F}[f_{\mathrm{in}}]<\mathcal{F}[G_{0}]$ we have
     \begin{align*}
         \lim_{t \rightarrow{\infty}} \mathrm{dist}(\mathbf{u}_{f}(t),\mathcal{S})=0\,.
     \end{align*}
     \end{itemize}
\end{lemma}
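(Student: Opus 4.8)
The plan is to exploit the identity \eqref{equtile}, which decomposes the excess free energy as $\mathcal{F}[f]-\mathcal{F}_* = D\,\mathcal{H}[f|G_{u_f}] + \mathcal{V}(u_f)-\mathcal{V}_*$, together with the fact that $t\mapsto\mathcal{F}[f(t,\cdot)]$ is nonincreasing and bounded below by $\mathcal{F}_*$, hence converges to some limit $\mathcal{F}_\infty \geq \mathcal{F}_*$. First I would integrate the Fisher dissipation identity \eqref{fisher} in time to conclude that $\mathcal{I}[f(t,\cdot)]$ is integrable on $(0,\infty)$; combined with \Cref{lemmedesmoments} (uniform-in-time moment bounds of all orders) and parabolic regularity, this gives a sequence $t_n\to\infty$ along which $f(t_n,\cdot)$ converges — in a suitable weighted sense — to a function $f_\infty$ with $\mathcal{I}[f_\infty]=0$, i.e. $f_\infty = G_u$ for some $u$ satisfying the compatibility equation $\mathbf{H}(u)=0$. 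By \Cref{prop0}, $u\in\mathcal{S}\cup\{0\}$.

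The key point is then to rule out the possibility $u=0$ using the entropy hypothesis $\mathcal{F}[f_{\mathrm{ini}}]<\mathcal{F}[G_0]$. Since the free energy is nonincreasing, $\mathcal{F}_\infty \leq \mathcal{F}[f_{\mathrm{ini}}] < \mathcal{F}[G_0]$, whereas if the limit point were $G_0$ then by lower semicontinuity of $\mathcal{F}$ along the approximating sequence one would get $\mathcal{F}_\infty \geq \mathcal{F}[G_0]$, a contradiction. Hence every limit point of $\{f(t,\cdot)\}$ (along sequences realizing the $\liminf$ of the Fisher information) is a polarized Gibbs state $G_u$ with $|u|=r(D)$, so $\mathrm{dist}(u_f(t_n),\mathcal{S})\to 0$ along such sequences. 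To upgrade this to convergence of the full trajectory, I would argue by contradiction: if $\mathrm{dist}(u_f(t),\mathcal{S})\not\to 0$, there is $\varepsilon>0$ and $t_n\to\infty$ with $\mathrm{dist}(u_f(t_n),\mathcal{S})\geq\varepsilon$; by the moment bounds the $u_f(t_n)$ stay in a fixed compact set, so one extracts a convergent subsequence whose limit $\bar u$ has $\mathrm{dist}(\bar u,\mathcal{S})\geq\varepsilon$, yet the same compactness argument forces the associated density limit to be a stationary state, hence $\bar u\in\mathcal{S}\cup\{0\}$; the entropy barrier excludes $0$, leaving $\bar u\in\mathcal{S}$, a contradiction. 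The case $D\geq D_*$ is identical but easier, since $G_0$ is then the unique stationary state and no entropy barrier is needed.

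The main obstacle I anticipate is the compactness/passage-to-the-limit step: extracting from the uniform moment bounds and the time-integrability of the Fisher information a genuinely convergent subsequence $f(t_n,\cdot)\to f_\infty$ in a topology strong enough that (i) $u_{f(t_n)}\to u_{f_\infty}$ (which needs tightness, supplied by \Cref{lemmedesmoments}), and (ii) $\mathcal{I}[f_\infty]=0$ can be deduced — this requires some care, typically working with $\sqrt{f}$ in $H^1$ locally and using the Dunford–Pettis / weak-$L^1$ compactness together with lower semicontinuity of the Fisher information. Since the statement is quoted from \cite{Li2021}, in the write-up I would either reproduce this argument in condensed form or simply cite the corresponding result there, noting that the moment lemma above provides exactly the integrability input the argument needs.
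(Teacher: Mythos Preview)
Your approach differs from the paper's and has a gap in the upgrade step. The paper does not use compactness of trajectories or parabolic regularity at all. Instead it works directly with the scalar function $\gamma(t):=|u_f(t)-u_{G_{u_f(t)}}(t)|^{8/3}=|\mathbf{H}(u_f(t))|^{8/3}$. Using H\"older, the Csisz\'ar--Kullback inequality, the log-Sobolev inequality \eqref{log_sob} and the resulting de~Bruijn estimate $\int_0^\infty \mathcal{H}[f|G_{u_f}]\,dt<\infty$, one gets $\gamma\in L^1(\mathbb{R}_+)$; the moment lemma gives that $u_f$ and $u_f'=\alpha u_f-\alpha\int v|v|^2 f\,dv$ are uniformly bounded, hence $\gamma'$ is bounded. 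Barbalat's lemma then yields $\gamma(t)\to 0$, i.e.\ $H(|u_f(t)|)\to 0$ along the \emph{entire} trajectory. Since the zeros of $H$ are isolated and $|u_f|$ is continuous, $|u_f(t)|$ converges to one of them; the free-energy barrier excludes $0$ when $D<D_*$, exactly as you say.

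The gap in your argument is precisely where you write ``the same compactness argument forces the associated density limit to be a stationary state'' along the contradiction sequence $t_n$. Your identification of limit points as stationary states relied on choosing times so that $\mathcal{I}[f(t_n)]\to 0$; along an \emph{arbitrary} sequence with $\mathrm{dist}(u_f(t_n),\mathcal{S})\geq\varepsilon$ you have no such control, and lower semicontinuity of $\mathcal{I}$ then tells you nothing. To close this you would need either a LaSalle invariance principle (hence relative compactness of the full orbit and continuity of the semiflow in a topology in which $\mathcal{F}$ is continuous), or a direct proof that $\mathcal{I}[f(t)]\to 0$ for all $t$ --- which is essentially what the Barbalat argument provides, but at the level of the much simpler quantity $\gamma$. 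So the paper's route is both shorter and sidesteps the compactness step you correctly flagged as the main obstacle.
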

For completeness, let us give a proof with some additional details compared to \cite{Li2021}.
\begin{proof}
    First, we prove that for any $D>0$:
      $$\lim_{t\rightarrow \infty}|\mathbf{u}_{f}(t)-\mathbf{u}_{G_{\mathbf{u}_{f}}}(t)| =0.$$
 Let $\gamma(t)= |\mathbf{u}_{f}(t)-\mathbf{u}_{G_{\mathbf{u}_{f}}}(t)|^{\beta}$ with $\beta>0$.
    \begin{align*}
        \int_{0}^{\infty} \gamma(t)dt &= \int_{0}^{\infty} \left| \int_{\mathbb{R}^d}(vf -v\,G_{\mathbf{u}_{f}})\,dv \right|^{\beta}dt \\& \leq\int_{0}^{\infty} \left[  \left(\int_{\mathbb{R}^d} \,|f-G_{\mathbf{u}_{f}}|\,dv \right)^{\frac{3}{4}} \left(\int_{\mathbb{R}^d} |v|^4\,|f+G_{\mathbf{u}_{f}}|\,dv \right)^{\frac{1}{4}}\right]^{\beta}dt \\&\leq C \int_{0}^{\infty}  \left(\int_{\mathbb{R}^d}\, |f-G_{\mathbf{u}_{f}}|\,dv \right)^{\frac{3\beta}{4}}dt
    \end{align*}
using that $M_4$ is bounded by \Cref{lemmedesmoments}.
Choosing $\beta = 8/3$, we obtain
\begin{equation}
\label{eq0}
     \int_{0}^{\infty} \gamma(t)dt \leq  C \int_{0}^{\infty}  \left(\int_{\mathbb{R}^d} |f-G_{\mathbf{u}_{f}}|dv \right)^{2}.
\end{equation}
By the Csiszár-Kulback inequality, we know that 
\begin{align}
\tag{C-K}\label{eq100}
    \frac{1}{2} \left\| f - G_{\mathbf{u}_{f}} \right\|^2_{L^1(\mathbb{R}^d)} \leq \mathcal{H}[f | G_{\mathbf{u}_{f}}]
\end{align}
Using  \eqref{log_sob} and \eqref{fisher}, we obtain
\begin{align*}
    \frac{d}{dt} \mathcal{F}[f(t,.)]=-\mathcal{I}[f(t,.)]\leq -\,D^2 \,\mathcal{K}\, \mathcal{H}[f|G_{\mathbf{u}_{f}}],
\end{align*}
where $\mathcal{K}=\mathcal{K}_{\epsilon}$ is the positive constant in \eqref{log_sob} with $\epsilon=\sup_{t \geq 0}\mathrm{dist}(\mathbf{u}_{f}(t),\mathcal{S})<+\infty$ by \Cref{lemmedesmoments}. 
Since $t \mapsto \mathcal{F}[f(t,.)]$ is nonincreasing and bounded from below, it has a limit $\mathcal{F}_{\infty} \in \mathbb{R}$ as $t \rightarrow+\infty$. Integrating the previous inequality in time, we get a de Bruijn type estimate
\begin{align}
\label{eqq2}
    \int_{0}^{\infty} \mathcal{H}[f|G_{\mathbf{u}_{f}}] dt \leq \frac{1}{\mathcal{K}D^2}( \mathcal{F}[f_{\mathrm{in}}]- \mathcal{F}_{\infty}).
\end{align}
With \eqref{eq0}, \eqref{eq100} and \eqref{eqq2}, we obtain that  $\gamma(t)= W(\mathbf{u}_{f}(t))$ is integrable on $\mathbb{R}_+$, where
$W$ is defined on $\mathbb{R}^d$ by \mbox{$W(Y)= |Y-\mathbf{u}_{G_Y}|^{\beta}$}. Observe that  $\gamma'(t)= \nabla W(\mathbf{u}_{f}(t))  \mathbf{u}_{f}'(t)$ is bounded since $\mathbf{u}_{f} $ and~\mbox{$\mathbf{u}_{f}'=\alpha \mathbf{u}_{f}-\alpha\int_{\mathbb{R}^d}v|v|^2f\,dv$}
are uniformly bounded in $t\geq0$. Since $\gamma$ is integrable in time and uniformly continuous, we obtain $\lim_{t\rightarrow \infty} \gamma(t)=0$ thanks to Barbalat's lemma. Using \eqref{relationimp}, we obtain\,: 
\begin{align*}
    |\mathbf{u}_{f}(t)-\mathbf{u}_{G_{\mathbf{u}_{f}}}(t)| = -\,\frac{{H}(|\mathbf{u}_{f}|)}{{K}(|\mathbf{u}_{f}|)} \frac{\mathbf{u}_{f}}{|\mathbf{u}_{f}|}.
\end{align*}
We conclude that 
\begin{align*}
    \lim_{t\rightarrow \infty}{H}(|\mathbf{u}_{f}(t)|) =0\,.
\end{align*}
If $D\geq D_*$ then 0 is the unique zero of $H$ (see \cite{Li2021} for the proof), and 
\begin{align*}
    \lim_{t \rightarrow{\infty}}\mathbf{u}_{f}(t)=0\,.
\end{align*}
If $D<D_*$, we recall that the zeros of ${H}$ on $\mathbb{R}_+$ are exactly $r(D)$ and $0$ by \Cref{prop0}. If $\mathcal{F}[f_{\mathrm{in}}]<\mathcal{F}[G_0]$, the case $\lim_{t \rightarrow{\infty}}\mathbf{u}_{f}(t) =0$ is excluded, since in this case \[\mathcal{F}[f_{\mathrm{in}}]\geq \lim_{t\rightarrow \infty}\mathcal{F}[f]\geq \inf_{\mathbf{u}_{f}=0}\mathcal{F}[f]=\mathcal{F}[G_0]>\mathcal{F}[f_{\mathrm{in}}]. \]
We conclude that
\begin{equation*}
    \begin{aligned}
        \lim_{t \rightarrow{\infty}}\mathrm{dist}(\mathbf{u}_{f},\mathcal{S}) = 0\,. 
    \end{aligned}
\end{equation*}
\end{proof}
\end{subsection}
\end{section}

\begin{section}{A rate of decay for the free energy.}

\label{sec:sec1}
The aim of this section is to prove the convergence of the free energy to its minimum. The proof relies on a differential inequality on ~$\mathcal{F}[f]$ based on ~ \eqref{equtile}, \eqref{log_sob} and Polyak–Łojasiewicz type estimates ~\eqref{P_L_M_B}. It also gives an algebraic rate of convergence for the decay of the free energy.

\begin{subsection}{A detailed analysis of  \texorpdfstring{$\mathcal{V}$}{V}.}
    
Let us recall that the function $\mathcal{V}$ is defined through \eqref{V} and \eqref{normalization} as 
\begin{align}
\label{V2}
     \mathcal{V}(\mathbf{u})=-\,D\log\left( \int_{\mathbb{R}^d}e^{-\frac{1}{D}\left(\frac{1}{2}|v-\mathbf{u}|^2+ \psi_{\alpha}(v)\right)}\,dv\right).
\end{align}
In particular, $\mathcal{V}$ is smooth and radial and with a slight abuse of notation, we shall write $\mathcal{V}(\mathbf{u})=\mathcal{V}(|\mathbf{u}|)$. The key point is to prove estimates of Polyak–Łojasiewicz type, which are based on the following Taylor expansion.

\begin{lemma}[]
\label{P-L}
With $\mathcal{V}$ defined by \eqref{V2}, we have the following estimates.
\leavevmode\\
\begin{itemize}
    \item If \( D < D_* \). For all $\epsilon >0$,  there exist $\mu_1>0$ and $\mu_2>0$ which satisfy $|\mu_i-\mathcal{V}''(r(D))|<\epsilon$ for $i=1,2$ and $0<\delta< \mathrm{min}\big\{r(D)/2, \epsilon\big \}$ such that for all $\mathbf{u}$ satisfying $\mathrm{dist}(\mathbf{u},\mathcal{S})<\delta $ we have,
    \begin{align}\label{P_L_M_B}
        \frac{\mu_1}{2} \mathrm{dist}(\mathbf{u},\mathcal{S})^2 
        \leq \mathcal{V}(\mathbf{u}) - \mathcal{V}_* 
        \leq \frac{1}{2\mu_2} |\nabla \mathcal{V}(\mathbf{u})|^2 .
    \end{align}
    \item If \( D > D_* \). For all $\epsilon >0$,  there exist $\mu_1>0$ and $\mu_2>0$ which satisfy $|\mu_i-\mathcal{V}''(0)|<\epsilon$ for $i=1,2$ and $0<\delta<\epsilon$ such that for all $|\mathbf{u}|<\delta$ we have,
    \begin{align}
    \label{Pol_D>D_*}
    \frac{\mu_1}{2} |\mathbf{u}|^2\leq \mathcal{V}(\mathbf{u})-\mathcal{V}(0)\leq \frac{1}{2\mu_2}|\nabla\mathcal{V}(\mathbf{u})|^{2} .
\end{align}
\item If \( D = D_* \). For all $\epsilon >0$,  there exist $\mu_1>0$ and $\mu_2>0$ which satisfy $|\mu_i-\mathcal{V}^{(4)}(0)|<\epsilon$ for $i=1,2$ and  $0<\delta<\epsilon$ such that for all $|\mathbf{u}|<\delta$ we have,
\begin{align}
\label{polD=D_*}
    \frac{\mu_1}{24} |\mathbf{u}|^4\leq \mathcal{V}(\mathbf{u})-\mathcal{V}(0)\leq \ \frac{6^{\frac{4}{3}}}{24 \mu_2^{\frac{1}{3}}} |\nabla\mathcal{V}(\mathbf{u})|^{\frac{4}{3}}.
\end{align}
\end{itemize}
\end{lemma}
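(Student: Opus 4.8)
The plan is to prove all three cases of \Cref{P-L} from a single mechanism: a Taylor expansion of the smooth radial function $\mathcal{V}$ around its critical points, combined with the elementary fact that for a one-variable smooth function which attains a strict local minimum with a nonvanishing leading-order term, both a lower bound of Polyak–Łojasiewicz type and a quadratic (resp. quartic) lower bound hold near that point, with constants controlled by the leading Taylor coefficient. First I would reduce everything to the scalar variable $R=|u|$, writing $\mathcal{V}(u)=\mathcal{V}(R)$ and noting $|\nabla\mathcal{V}(u)|=|\mathcal{V}'(R)|$; the set $\mathcal{S}$ corresponds to $R=r(D)$ when $D<D_*$, and $\mathrm{dist}(u,\mathcal{S})=|R-r(D)|$ for $u$ in the relevant neighborhood (this uses $\delta<r(D)/2$ so that the nearest point of $\mathcal{S}$ is radial).

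For the case $D<D_*$: by \Cref{prop0} and the characterization of stationary states via the zeros of $H$, the point $R=r(D)$ is a critical point of $\mathcal{V}$ (since $\mathcal{V}'(R)$ is proportional to $H(R)$ up to a positive factor — this should be spelled out from \eqref{relationimp}, \eqref{H}, \eqref{K} and the definition \eqref{V}), and it is a local minimum of $\mathcal{V}$ with $\mathcal{V}''(r(D))>0$ because $\mathcal{V}_*$ is the global infimum of the free energy restricted to Gibbs states. Writing the second-order Taylor expansion $\mathcal{V}(R)=\mathcal{V}_*+\tfrac12\mathcal{V}''(r(D))(R-r(D))^2+o((R-r(D))^2)$ and $\mathcal{V}'(R)=\mathcal{V}''(r(D))(R-r(D))+o(R-r(D))$, I choose $\delta$ small enough that the remainders are dominated by any prescribed fraction of the leading term; setting $\mu_1=\mathcal{V}''(r(D))-\epsilon'$ and $\mu_2=\mathcal{V}''(r(D))-\epsilon'$ for suitable $\epsilon'$ yields $\tfrac{\mu_1}{2}(R-r(D))^2\le\mathcal{V}(R)-\mathcal{V}_*$ and, estimating $\mathcal{V}(R)-\mathcal{V}_*\le\tfrac12(\mathcal{V}''+\epsilon')(R-r(D))^2\le\tfrac{1}{2\mu_2}|\mathcal{V}'(R)|^2$ after controlling $|\mathcal{V}'(R)|^2\ge(\mathcal{V}''-\epsilon')^2(R-r(D))^2$. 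The cases $D>D_*$ (minimum at $R=0$, leading term $\tfrac12\mathcal{V}''(0)|u|^2$) and $D=D_*$ (degenerate minimum at $R=0$: $\mathcal{V}'(0)=\mathcal{V}''(0)=\mathcal{V}'''(0)=0$, $\mathcal{V}^{(4)}(0)>0$, leading term $\tfrac{1}{24}\mathcal{V}^{(4)}(0)|u|^4$) are identical in structure; the $4/3$ exponent and the constant $6^{4/3}$ in \eqref{polD=D_*} simply come from inverting $|\mathcal{V}'(R)|\approx\tfrac16\mathcal{V}^{(4)}(0)R^3$ to get $R^4\approx(6/\mathcal{V}^{(4)}(0))^{4/3}|\mathcal{V}'(R)|^{4/3}$ and matching the quartic lower bound.

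The main obstacle, and the point deserving the most care, is verifying that the leading Taylor coefficients have the correct sign and, at $D=D_*$, that the lower-order derivatives genuinely vanish — in other words, that $R=0$ is exactly a fourth-order minimum at the critical noise. This is where the specific structure of the model enters: one must connect $\mathcal{V}^{(k)}(0)$ to the behavior of $H$ near $0$ (since $H(R)/R$ or its derivatives govern the Hessian-type quantities), and use that $D_*$ is precisely the value at which the sign of the relevant coefficient changes. I would extract these sign facts from the analysis of $H$ in \cite{Li2021} underlying \Cref{prop0}, possibly recording them as a preliminary observation; everything after that is the soft one-variable argument above. A secondary, purely bookkeeping, obstacle is producing $\mu_1,\mu_2$ and $\delta$ in the order quantifiers demand ($\epsilon$ given first, then $\mu_i$ within $\epsilon$ of the leading coefficient, then $\delta<\min\{r(D)/2,\epsilon\}$): this is handled by fixing $\epsilon'<\epsilon$ first, choosing $\mu_1=\mu_2=\mathcal{V}''(r(D))-\epsilon'$ (or the analogue), and only then shrinking $\delta$ to absorb the Taylor remainders.
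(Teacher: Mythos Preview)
Your proposal takes essentially the same route as the paper: reduce to the scalar variable $R=|u|$, Taylor-expand $\mathcal{V}$ and $\mathcal{V}'$ around the relevant critical point, and then the inequalities \eqref{P_L_M_B}--\eqref{polD=D_*} drop out once the leading Taylor coefficient is known to be strictly positive. You also correctly flag the sign/nondegeneracy of those coefficients as the real content.

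One point to tighten. Your sentence ``it is a local minimum of $\mathcal{V}$ with $\mathcal{V}''(r(D))>0$ because $\mathcal{V}_*$ is the global infimum of the free energy restricted to Gibbs states'' does not actually give strict positivity: a global minimum only yields $\mathcal{V}''(r(D))\ge 0$, and you need strict inequality for the estimates to hold with $\mu_1,\mu_2>0$. You recognize this later as ``the main obstacle'', but be aware that it is not simply a matter of reading off a sign from \cite{Li2021}. The paper proves $\mathcal{V}''(r(D))>0$ (equivalently $H'(r(D))<0$, via $\mathcal{V}''(r(D))=-H'(r(D))/K(r(D))$) by a dedicated argument: Rolle's theorem gives a zero of $H'$ in $(0,r(D))$, and a monotonicity argument (log-convexity of $h'$, i.e.\ $h''/h'$ nondecreasing, obtained via Cauchy--Schwarz on the angular integral) shows this zero is unique, forcing $H'(r(D))<0$. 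Likewise, for $D=D_*$ the paper shows $\mathcal{V}^{(4)}(0)>0$ by observing $\mathcal{V}^{(4)}(0)=-H^{(3)}(0)/K(0)$ and proving $H^{(3)}(0)\neq 0$ via strict convexity of $t\mapsto\int_0^\infty s^t e^{-\phi_\alpha(s)/D_*}\,ds$ (three consecutive equal values would contradict convexity). These are short but genuine computations; once you have them, the rest of your Taylor-expansion argument goes through exactly as you describe.
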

In the following, we will denote by 
\begin{equation}
\label{N}
\mathcal{N}:=
\left\{
\begin{array}{ll}
\{\mathbf{u} \in \mathbb{R}^d \text{ such that } \mathrm{dist}(\mathbf{u},\mathcal{S}) < \delta\} & \text{if } D < D_* \\
B(0,\delta) & \text{if } D \geq D_*
\end{array} 
\right.
\quad \text{where } \delta \text{ is defined as in \Cref{P-L}.}
\end{equation}

\begin{proof}
    The proof follows from a Taylor expansion of $\mathcal{V}$ and $\mathcal{V}'$ around $\mathbf{u}\in \mathcal{S}$ if $D<D_*$ and around $0$ if $D\geq D_*$. Since $\mathcal{S}$ and $0$ are critical points of $\mathcal{V}$ (see \Cref{lemmappendic1}), one has to check that:
    \begin{itemize}
        \item $\mathcal{V}''(r(D))>0$ if $D<D_*$,
        \item $\mathcal{V}''(0)>0 $ if $D>D_*$,
        \item $\mathcal{V}''(0)=\mathcal{V}^{(3)}(0)=0$ and $\mathcal{V}^{(4)}(0)>0$ if $D=D_*$. \qedhere
    \end{itemize}
\end{proof}
We prove these three points in the following lemmas by studying the functions $H$ and $K$ introduced in  \cite{Li2021} defined by \eqref{H} and \eqref{K},
\[K(r)= \int_{\mathbb{R}^d}{e^{-\frac{1}{D}\left(\frac{1-\alpha}{2}|v|^2+ \frac{\alpha}{4} |v|^4-v_1r\right)}}dv \text{ and } H(r)= \int_{\mathbb{R}^d}{(v_1-r)e^{-\frac{1}{D}\left(\frac{1-\alpha}{2}|v|^2+ \frac{\alpha}{4} |v|^4-v_1r\right)}}dv\]
and 
 \[\mathcal{Z}(r)= \int_{\mathbb{R}^d}{e^{-\frac{1}{D}(\frac{1}{2}|v-re_1|^2+ \frac{\alpha}{4} |v|^4 -\frac{\alpha}{2}|v|^2)}dv} \text{ according to } \eqref{normalization}.\]
 \Cref{lemmappendic1} is a reformulation of results coming from \cite{Li2021} for the function $\mathcal{V}$.

\begin{lemma}
\label{lemmappendic1}
    Let $D>0$, then the set of minimizers of $\mathcal{V}$ is $\mathcal{S}$ if $D<D_*$ and $\{0\} $ if $D\geq D_*$.
\end{lemma}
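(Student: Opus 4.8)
The goal is to identify the minimizers of $\mathcal{V}$ with $\mathcal{S}$ (resp.\ $\{0\}$) when $D<D_*$ (resp.\ $D\geq D_*$). The plan is to connect $\mathcal{V}$ to the functions $H$ and $K$ from \eqref{H}--\eqref{K}, for which the zero structure is already known via \Cref{prop0}, and then to use the fact that $\mathcal{V}$ is a smooth radial function that is coercive (it tends to $+\infty$ as $|u|\to\infty$, since $\psi_\alpha$ grows quartically while $|v-u|^2/2$ only shifts the Gaussian-type weight, so $\mathcal{Z}(u)$ stays bounded and in fact $\log\mathcal{Z}(u)\to-\infty$). Coercivity plus smoothness guarantees that global minimizers exist and are attained at critical points.

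\medskip\noindent\textbf{Step 1: differentiate $\mathcal{V}$.} Writing $\mathcal{V}(u)=\mathcal{V}(R)$ with $R=|u|$ and completing the square in \eqref{V2} to bring it to the form involving \eqref{K}, one gets $\mathcal{V}(R)=-D\log K(R)+\tfrac12 R^2 + (\text{const})$ up to the $|v|^2$--versus--$|v-u|^2$ bookkeeping; differentiating and using that $K'(R)=\tfrac1D\int v_1 e^{(\cdots)}dv$ yields a clean identity of the shape $\mathcal{V}'(R)=\tfrac1{K(R)}\,H(R)\cdot(\text{positive factor})$, consistent with \eqref{relationimp} and \eqref{compatibilityequation} (indeed $\mathbf H(u)=-\mathcal{V}'(|u|)\,u/|u|$ up to a positive factor, since the compatibility equation is exactly the critical point equation for the free energy). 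Hence the critical points of $\mathcal{V}$ on $(0,\infty)$ are precisely the positive zeros of $H$, together with $R=0$ (which is always critical by radial symmetry and smoothness).

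\medskip\noindent\textbf{Step 2: use the known zero structure of $H$.} By \Cref{prop0} and the discussion following \eqref{K}: if $D\geq D_*$, the only zero of $H$ on $\mathbb{R}_+$ is $0$, so $\mathcal{V}$ has a unique critical point, which by coercivity must be the global minimizer; hence the minimizer set is $\{0\}$. If $D<D_*$, the zeros of $H$ on $\mathbb{R}_+$ are exactly $0$ and $r(D)$, so $\mathcal{V}$ has exactly two critical values on the radial variable, at $R=0$ and $R=r(D)$. Coercivity forces the global minimum to be attained at one of them; to rule out $R=0$ I would invoke that $\mathcal{F}_*=\mathcal{F}[G_u]$ with $|u|=r(D)$ strictly below $\mathcal{F}[G_0]$ (stated just above \eqref{normalization}, from \cite[Corollary~4.1]{Li2021}), together with \eqref{eqbase} evaluated at $f=G_0$ and at $f=G_u$: since $\mathcal{H}[G_w|G_w]=0$ and $\mathbf H=0$ on stationary states, \eqref{eqbase} gives $\mathcal{F}[G_w]=\mathcal{V}(w)-D\log\mathcal Z$-type cancellation reducing to $\mathcal{F}[G_w]=\mathcal{V}(|w|)$ up to an additive constant independent of $w$. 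Thus $\mathcal{V}(r(D))<\mathcal{V}(0)$, so the minimum is at $R=r(D)$, i.e.\ the minimizer set is $\mathcal{S}$.

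\medskip\noindent\textbf{Main obstacle.} The routine-but-delicate part is the bookkeeping in Step 1: carefully completing the square in the exponent of \eqref{V2} to pass from $\tfrac12|v-u|^2+\psi_\alpha(v)$ to the exponent $\tfrac{1-\alpha}{2}|v|^2+\tfrac\alpha4|v|^4-v_1R$ appearing in $K$, tracking the resulting $+\tfrac12 R^2$ term and an $R$-independent constant, and then differentiating under the integral sign (justified by the moment bounds, since the quartic weight makes all derivatives integrable) to obtain the exact relation $\mathcal{V}'(R)=\big(H(R)/K(R)\big)$ — matching \eqref{relationimp} so that critical points of $\mathcal{V}$ coincide with zeros of $H$. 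Once that identity is in hand, everything else is a direct consequence of \Cref{prop0} and the coercivity of $\mathcal{V}$. The remaining derivative computations ($\mathcal{V}''(r(D))>0$, $\mathcal{V}''(0)>0$ for $D>D_*$, and the degeneracy $\mathcal{V}''(0)=\mathcal{V}^{(3)}(0)=0<\mathcal{V}^{(4)}(0)$ at $D=D_*$) needed for \Cref{P-L} are then obtained by differentiating this identity and reading off the sign of $H'$ at its zeros — which is again available from Li's analysis, since the transversality of the zero $r(D)$ and the order of vanishing at $0$ are exactly what drives the phase transition.
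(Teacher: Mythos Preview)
Your proposal is correct, and Step~1 is exactly what the paper does (with one sign slip: the identity is $\mathcal{V}'(r)=-H(r)/K(r)$, not $+H(r)/K(r)$; this does not affect the identification of critical points with zeros of $H$). The genuine difference is in Step~2. The paper does not use coercivity or the external comparison $\mathcal{F}[G_u]<\mathcal{F}[G_0]$; instead it reads off the \emph{sign} of $H$ on each interval --- using $\lim_{r\to\infty}H(r)=-\infty$ and, for $D<D_*$, the fact (from \cite[Propositions~2.2--2.3]{Li2021}) that $H>0$ on $(0,r(D))$ and $H<0$ on $(r(D),\infty)$ --- so that $\mathcal{V}'$ has a definite sign on each interval and the monotonicity of $\mathcal{V}$ immediately locates the global minimum. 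Your route reaches the same conclusion but trades this monotonicity argument for coercivity plus the free-energy comparison from \cite[Corollary~4.1]{Li2021}; this is valid (and makes the link $\mathcal{F}[G_w]=\mathcal{V}(|w|)$ for stationary $w$ explicit), though slightly less self-contained than the paper's approach. Your closing remarks on $\mathcal{V}''$, $\mathcal{V}^{(3)}$, $\mathcal{V}^{(4)}$ pertain to the subsequent lemmas rather than to the present one.
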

\begin{proof}
Notice that
$\mathcal{Z}(r)= e^{-\frac{r^2}{2D}} K(r)$. 
By differentiating  \eqref{V} and using  $DK'(r)-rK(r)=H(r)$, we obtain
\begin{align*}
    \mathcal{V}'(r)
    &= - \frac{H(r)}{ K(r)}.
\end{align*}
 It is shown in the proof of \cite[Proposition~2.3]{Li2021}
  for the case $d \geq  2$, and in the proof of \cite[Proposition~2.2]{Li2021} for $d = 1$, that
\begin{align*}
    \lim_{r \rightarrow \infty} H(r) = -\infty.
\end{align*}
Let $D\geq D_*$
Since the only zero of $H$ is $0$ by \Cref{prop0} and $H$ is continuous, we conclude that~ $H(r)<0$ for all $r>0$ \emph{i.e.} $\mathcal{V}'(r) > 0$ for all $r > 0$ so $0$ is the unique minimum of $\mathcal{V}$.
\\
Otherwise, if $0<D<D_*$, the function $H$ has a unique zero on $\mathbb{R}_+^*$  denoted by $r(D)$ by \Cref{prop0}. Moreover, for $r \in (0,r(D))$, we have $H(r)>0$, and for $r>r(D)$, we have $H(r)<0$ according to the proof of \cite[Proposition~2.2, Proposition~2.3]{Li2021}. Since $K(r)$ is positive, this concludes the lemma. 
\end{proof}
Taking another derivative and since $H(r(D))=0$,
 we obtain:
 \begin{align}
 \label{lasteq}
     {\mathcal{V}}''(r(D))=- \frac{H'(r(D))}{K(r(D))}.
 \end{align}

\begin{lemma}
\label{Hprime}
    Let  $D<D_*$, then $\mathcal{V}''(r(D))>0.$
\end{lemma}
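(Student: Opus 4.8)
The plan is to go through \eqref{lasteq}: since $K(r(D))>0$, the claim is equivalent to $H'(r(D))<0$. Differentiating \eqref{H} under the integral sign — legitimate by dominated convergence, the integrand and its $R$‑derivative being dominated, uniformly for $R$ in compacts, by integrable functions thanks to the super‑exponential decay in $v$ — gives
\[
H'(r)=-K(r)+\frac1D\int_{\mathbb{R}^d}(v_1-r)\,v_1\,e^{-\frac1D\left(\frac{1-\alpha}{2}|v|^2+\frac{\alpha}{4}|v|^4-v_1r\right)}dv .
\]
At $r=r(D)$ one has $H(r(D))=0$, i.e.\ $\int_{\mathbb{R}^d}(v_1-r(D))\,e^{-(\cdots)}dv=0$; subtracting $r(D)$ times this null integral replaces $(v_1-r(D))v_1$ by $(v_1-r(D))^2$, so
\[
H'(r(D))=K(r(D))\Bigl(\tfrac1D\sigma^2-1\Bigr),\qquad \sigma^2:=\int_{\mathbb{R}^d}\bigl(v_1-r(D)\bigr)^2\,G_{r(D)e_1}(v)\,dv .
\]
The quantity $\sigma^2$ is exactly the variance of the first coordinate under $G_{r(D)e_1}$, because the compatibility equation \eqref{compatibilityequation} forces $\int_{\mathbb{R}^d}v\,G_{r(D)e_1}\,dv=r(D)e_1$. (Equivalently, $\mathcal{Z}(r)=e^{-\frac{r^2}{2D}}K(r)$ and \eqref{V2} give $\mathcal{V}(r)=\tfrac{r^2}{2}-D\log K(r)$, hence $\mathcal{V}'(r)=r-m(r)$ and $\mathcal{V}''(r)=1-\tfrac1D\int_{\mathbb{R}^d}(v_1-m(r))^2G_{re_1}\,dv$, where $m(r):=\int_{\mathbb{R}^d}v_1\,G_{re_1}\,dv$.) Thus \Cref{Hprime} reduces to the variance bound $\sigma^2<D$.

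One inequality is immediate: by \Cref{lemmappendic1}, $\mathcal{V}'=-H/K$ is negative on $(0,r(D))$ and positive on $(r(D),\infty)$, so $r(D)$ is a local minimum of the smooth radial function $\mathcal{V}$, whence $\mathcal{V}''(r(D))\ge0$, i.e.\ $\sigma^2\le D$. For the strict inequality I would exploit the quartic structure through moment identities: writing $d\mu:=G_{r(D)e_1}\,dv$ and using the integration by parts $\int_{\mathbb{R}^d}\partial_1g\,d\mu=\tfrac1D\int_{\mathbb{R}^d}g\bigl((1-\alpha)v_1+\alpha|v|^2v_1-r(D)\bigr)d\mu$ with $g=1$ and $g=v_1$, together with $\int v_1\,d\mu=r(D)$, one obtains $\int|v|^2v_1\,d\mu=r(D)$ and $D=(1-\alpha)\int v_1^2\,d\mu+\alpha\int|v|^2v_1^2\,d\mu-r(D)^2$; eliminating $\int v_1^2\,d\mu$ yields the clean identity
\[
D-\sigma^2=\alpha\int_{\mathbb{R}^d}\bigl(|v|^2-1\bigr)v_1^2\,d\mu,\qquad\text{i.e.}\qquad \mathcal{V}''(r(D))=\frac{\alpha}{D}\int_{\mathbb{R}^d}\bigl(|v|^2-1\bigr)v_1^2\,G_{r(D)e_1}(v)\,dv .
\]
So the lemma is equivalent to the strict positivity of $\int_{\mathbb{R}^d}(|v|^2-1)v_1^2\,G_{r(D)e_1}\,dv$, a quantity which the previous step shows to be $\ge0$.

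The main obstacle is precisely this upgrade to strict positivity, i.e.\ excluding the degenerate case $\sigma^2=D$: the sign change of $H$ at $r(D)$ by itself gives only $H'(r(D))\le0$, and as $H$ is real‑analytic it could a priori vanish to higher order there, so ruling this out genuinely uses the shape of $\psi_\alpha$. A concrete route is to study $f(r):=D\,m(r)-r\int_{\mathbb{R}^d}(v_1-m(r))^2G_{re_1}\,dv$, for which $f(0)=0$ and, using $m'(r)=\tfrac1D\int_{\mathbb{R}^d}(v_1-m(r))^2G_{re_1}\,dv$, one gets $f'(r)=-\tfrac rD\kappa_3(r)$ with $\kappa_3(r):=\int_{\mathbb{R}^d}(v_1-m(r))^3G_{re_1}\,dv$; since $f(r(D))=r(D)(D-\sigma^2)$, the lemma follows once $\int_0^{r(D)}t\,\kappa_3(t)\,dt<0$ is established. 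One expects $\kappa_3(t)<0$ for all $t>0$ because $G_{te_1}$ is the base measure $\propto e^{-\phi/D}dv$ — whose $v_1$‑marginal is bimodal and mildly platykurtic in the symmetry‑breaking regime — exponentially tilted toward positive $v_1$, which renders the marginal law of $v_1$ left‑skewed; making this rigorous (or, alternatively, reading off from the study of $H$ and $K$ in \cite{Li2021} that $r(D)$ is a simple zero of $H$) is where I anticipate the real difficulty.
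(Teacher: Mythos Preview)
Your reductions are correct and the identity $\mathcal{V}''(r(D))=\tfrac{\alpha}{D}\int_{\mathbb{R}^d}(|v|^2-1)\,v_1^2\,G_{r(D)e_1}\,dv$ is a clean reformulation, but the proposal stops exactly where the content lies. The non-strict bound $\mathcal{V}''(r(D))\ge0$ is immediate from $r(D)$ being a minimum of $\mathcal{V}$, and you rightly note that real-analyticity of $H$ does not by itself exclude a higher-order zero at $r(D)$. Your route through the third cumulant reduces the lemma to $\int_0^{r(D)}t\,\kappa_3(t)\,dt<0$, but you do not prove this (nor the pointwise claim $\kappa_3<0$, whose heuristic ``left-skewed tilt'' justification is not obviously uniform in $\alpha,D$), and invoking \cite{Li2021} for simplicity of the zero is a deferral, not an argument. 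So there is a genuine gap.

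The paper closes it by an entirely different, global argument that bypasses moments at $r(D)$. Writing
\[
H(r)=\alpha\,|\mathbb{S}^{d-2}|\int_0^\infty (1-s^2)\,s^d\,e^{-\phi_\alpha(s)/D}\,h\!\left(\tfrac{rs}{D}\right)ds,
\]
with $h(s)=\int_0^\pi\cos\theta\,(\sin\theta)^{d-2}e^{s\cos\theta}\,d\theta$ (and $h=\sinh$ for $d=1$), Rolle's theorem produces a zero $r_1\in(0,r(D))$ of $H'$ since $H(0)=H(r(D))=0$. The key step is that $c(s):=h''(s)/h'(s)$ is non-decreasing, by Cauchy--Schwarz in the measure $\cos^2\theta\,(\sin\theta)^{d-2}e^{s\cos\theta}\,d\theta$; hence $s\mapsto s\,c(s)$ is increasing, and therefore so is $k(s):=h'(r_2 s/D)/h'(r_1 s/D)$ for any $r_2>r_1$. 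Splitting the $s$-integral for $H'$ at $s=1$ and pivoting at $k(1)$, the strict monotonicity of $k$ converts the equality $H'(r_1)=0$ into $H'(r_2)<0$ for every $r_2>r_1$. In particular $H'(r(D))<0$, and \eqref{lasteq} gives the lemma. This monotonicity-of-ratio trick is the missing idea in your attempt; to salvage the cumulant approach you would need an independent sign argument for $\kappa_3$ that seems no easier than the original question.
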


\begin{proof}We will prove that $H'$ has a unique zero $\mathbf{u}_0$ and $0<\mathbf{u}_0<r(D)$.
    Since $D<D_{*}$, thanks to \Cref{prop0}, we know that $H(0)=0$ and   $H(r(D))=0$ with $r(D)>0$. Using Rolle's lemma, we conclude that $H'$ has at least one zero on $(0,r(D))$.
    \\
Concerning uniqueness,
let $r_1 \in (0,r(D))$ be a zero of $H'$ and $r_2>r_1.$ 
In the following, we use the notation~\mbox{$\phi_{\alpha}(s):=\frac{\alpha}{4}s^4+\frac{1-\alpha}{2}s^2=\psi_{\alpha}(s)+\frac{s^2}{2}$} coming from \cite{Li2021}.
Thanks to \cite[Lemma~2.1 and \mbox{Section 2.3}]{Li2021},   we can write,

\begin{align} \label{eqpolaire} H(r) = \alpha |\mathbb{S}^{d-2}| \int_{0}^{\infty} (1 - s^2) s^d e^{- \frac{\phi_{\alpha}(s)}{D}} h\left(\frac{rs}{D}\right) ds \quad \text{where } h(s) = \begin{cases}
  \int_{0}^{\pi} \cos\theta (\sin\theta)^{d-2} e^{s \cos\theta} d\theta & \text{if } d \geq 2 \\
  \sinh(s) & \text{if } d=1,
\end{cases}
\end{align}
with the convention $|\mathbb{S}^0|=|\mathbb{S}^{-1}|=2$. If $d\geq 2$, we deduce from 
\begin{align*}
    c(s)&:=\frac{h''(s)}{h'(s)}
    = \frac{\int_{0}^{\pi}  (\cos\theta)^3 (\sin \theta )^{d-2} e^{ s \cos \theta} d \theta}{\int_{0}^{\pi}  (\cos\theta)^2 (\sin \theta )^{d-2} e^{ s \cos \theta} d \theta}
\end{align*}
that,
\begin{align*}
    c'(s)&= \frac{\int_{0}^{\pi}  (\cos\theta)^4 (\sin \theta )^{d-2} e^{ s \cos \theta} d \theta}{\int_{0}^{\pi}  (\cos\theta)^2 (\sin \theta )^{d-2} e^{ s \cos \theta} d \theta} - \left(\frac{\int_{0}^{\pi}  (\cos\theta)^3 (\sin \theta )^{d-2} e^{ s \cos \theta} d \theta}{\int_{0}^{\pi}  (\cos\theta)^2 (\sin \theta )^{d-2} e^{ s \cos \theta} d \theta}\right)^2 \geq 0,
\end{align*}
where we used the Cauchy-Schwarz inequality with the measure $d\mu= (\cos\theta)^2\, (\sin \theta )^{d-2} \,e^{ s \cos \theta} \,d \theta$. \\
If $d=1$, it is straightforward that $c(s)=h''(s)/h'(s)$ is non-decreasing.
For all $d\geq 1$,
let $k$ be the function defined by~ \mbox{$k(s)= {h'(\frac{r_2 s}{D})}/{ h'(\frac{r_1 s}{D})}$}. Differentiating  $k$ and using that $s\mapsto s\,c(s)$ is monotone increasing, we obtain that $k$ is increasing on $(0,+\infty)$. Finally we have for $d\geq 1$,

\begin{align*}
\int_{0}^1 (1-s^2)s^{d+1} e^{-\frac{\phi_{\alpha}(s)}{D}} h'\left(\frac{r_2 s}{D}\right)ds 
&= \int_{0}^1 (1-s^2)s^{d+1} e^{-\frac{\phi_{\alpha}(s)}{D}} k(s)h'\left(\frac{r_1 s}{D}\right)ds \\
&< \int_{0}^1 (1-s^2)s^{d+1} e^{-\frac{\phi_{\alpha}(s)}{D}} k(1)h'\left(\frac{r_1 s}{D}\right)ds \\
&\hspace{1cm}= \int_{1}^{\infty} (s^2-1)s^{d+1} e^{-\frac{\phi_{\alpha}(s)}{D}} k(1)h'\left(\frac{r_1 s}{D}\right)ds \\
&\hspace{1cm}< \int_{1}^{\infty} (s^2-1)s^{d+1} e^{-\frac{\phi_{\alpha}(s)}{D}} k(s)h'\left(\frac{r_1 s}{D}\right)ds \\
&\hspace{2cm}= \int_{1}^{\infty} (s^2-1)s^{d+1} e^{-\frac{\phi_{\alpha}(s)}{D}} h'\left(\frac{r_2 s}{D}\right)ds,
\end{align*}
that is, $H'(r_2)<0.$
In the end, for $d\geq 1$ we obtain $H'(r(D))<0 $, \emph{i.e.}, ~$\mathcal{V}''(r(D))>0$.
\end{proof}

\begin{lemma}
\label{lemmeappendice3}
    Let $D>D_*$, then $\mathcal{V}''(0)> 0$. If $D=D_*$, then  $\mathcal{V}''(0)=\mathcal{V}^{(3)}(0)=0$ and $\mathcal{V}^{(4)}(0) > 0$.
\end{lemma}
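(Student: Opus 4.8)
The plan is to derive everything from the identity $\mathcal{V}'(r) = -H(r)/K(r)$ established in the proof of \Cref{lemmappendic1}, combined with the polar-coordinate representation \eqref{eqpolaire} of $H$. First I would reduce the computation of the low-order derivatives of $\mathcal{V}$ at $0$ to derivatives of $H$ at $0$. Since $K(0)>0$ and $K$ is smooth, writing $\mathcal{V}'(r) = -H(r)/K(r)$ and differentiating gives: $\mathcal{V}''(0) = -H'(0)/K(0)$; and because $H$ is an odd function of $r$ (this is visible from \eqref{eqpolaire}, since $h$ is odd: $h(-s)=-h(s)$ for $d=1$ trivially and for $d\ge 2$ by the substitution $\theta\mapsto\pi-\theta$), we get $H(0)=0$, $H''(0)=0$, hence $\mathcal{V}^{(3)}(0)$ will vanish whenever $\mathcal{V}''(0)=0$. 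More precisely, from oddness of $H$ and $\mathcal{V}'(r)=-H(r)/K(r)$ one checks that $\mathcal{V}'$ is odd, so $\mathcal{V}$ is even, which immediately forces $\mathcal{V}^{(3)}(0)=0$ regardless; the real content is the sign of $\mathcal{V}''(0)$ and, in the critical case, of $\mathcal{V}^{(4)}(0)$.

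Next I would compute $H'(0)$ explicitly. Differentiating \eqref{eqpolaire} in $r$ at $r=0$ and using $h'(0) = \int_0^\pi \cos^2\theta\,(\sin\theta)^{d-2}\,d\theta > 0$ (with $h'(0)=1$ for $d=1$), one obtains
\begin{align*}
H'(0) = \frac{\alpha|\mathbb{S}^{d-2}|\,h'(0)}{D}\int_0^\infty (1-s^2)\,s^{d+1}\,e^{-\phi_\alpha(s)/D}\,ds.
\end{align*}
Thus $\mathcal{V}''(0) = -H'(0)/K(0)$ has the opposite sign of $\Phi(D):=\int_0^\infty(1-s^2)s^{d+1}e^{-\phi_\alpha(s)/D}\,ds$. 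The definition of $D_*$ in \cite{Li2021} is precisely the value at which this integral (or an equivalent quantity governing the first bifurcation from $G_0$) changes sign: for $D>D_*$ one has $\Phi(D)<0$, so $\mathcal{V}''(0)>0$; for $D<D_*$, $\Phi(D)>0$; and at $D=D_*$, $\Phi(D_*)=0$, giving $\mathcal{V}''(0)=0$ and hence also $\mathcal{V}^{(3)}(0)=0$. I would make the link to $D_*$ precise by quoting the characterization of $D_*$ from \cite{Li2021} (the threshold is defined exactly by the vanishing of the derivative of $H$ at $0$, equivalently of $\mathcal{V}''(0)$).

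For the critical case $D=D_*$ it remains to show $\mathcal{V}^{(4)}(0)>0$. Since $\mathcal{V}$ is even and $\mathcal{V}''(0)=0$, we have $\mathcal{V}^{(4)}(0) = -\big(H/K\big)^{(3)}(0)$, and expanding the quotient using $H(0)=H''(0)=0$ gives $\mathcal{V}^{(4)}(0) = -H^{(3)}(0)/K(0)$. So I must show $H^{(3)}(0)<0$ when $D=D_*$. Differentiating \eqref{eqpolaire} three times at $r=0$ brings down a factor $h^{(3)}(0)\,(s/D)^3$, and $h^{(3)}(0) = \int_0^\pi \cos^4\theta\,(\sin\theta)^{d-2}\,d\theta>0$, so
\begin{align*}
H^{(3)}(0) = \frac{\alpha|\mathbb{S}^{d-2}|\,h^{(3)}(0)}{D^3}\int_0^\infty (1-s^2)\,s^{d+3}\,e^{-\phi_\alpha(s)/D}\,ds.
\end{align*}
At $D=D_*$ we know $\int_0^\infty(1-s^2)s^{d+1}e^{-\phi_\alpha(s)/D_*}\,ds=0$, i.e. $\int_0^\infty s^{d+1}e^{-\phi_\alpha/D_*}ds = \int_0^\infty s^{d+3}e^{-\phi_\alpha/D_*}ds$; I would use this together with a correlation (Chebyshev/FKG-type) inequality to deduce $\int_0^\infty(1-s^2)s^{d+3}e^{-\phi_\alpha/D_*}ds<0$. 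Concretely, with the probability measure $d\nu = s^{d+1}e^{-\phi_\alpha/D_*}ds / Z$ on $(0,\infty)$, the vanishing condition says $\mathbb{E}_\nu[s^2]=1$, and the target integral equals $Z\cdot\mathbb{E}_\nu[s^2(1-s^2)] = Z(\mathbb{E}_\nu[s^2] - \mathbb{E}_\nu[s^4]) = Z(1 - \mathbb{E}_\nu[s^4]) \le Z(1 - (\mathbb{E}_\nu[s^2])^2) = 0$ by Jensen, with strict inequality since $s^2$ is non-constant under $\nu$. Hence $H^{(3)}(0)<0$ and $\mathcal{V}^{(4)}(0)>0$.

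The main obstacle I anticipate is bookkeeping rather than conceptual: making the identification of $D_*$ with the sign-change of $\Phi(D)$ fully rigorous by correctly citing the corresponding statement in \cite{Li2021} (the paper's $D_*$ is defined via the critical behavior of $H$ near $0$, so this should be a direct quotation, but one must check the normalization conventions match, e.g. the $d=1$ vs $d=2$ split and the $|\mathbb{S}^{d-2}|$ convention), and carefully justifying the differentiation under the integral sign in \eqref{eqpolaire} — which is harmless because $\phi_\alpha(s)/D = \tfrac{\alpha}{4D}s^4 + \cdots$ grows quartically, giving uniform-in-$r$ (locally) integrable majorants for all $r$-derivatives. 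The Jensen/correlation step in the critical case is the only genuinely substantive inequality, and it is short.
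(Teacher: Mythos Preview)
Your proposal is correct and follows essentially the same route as the paper: both reduce everything to the signs of $H'(0)$ and $H^{(3)}(0)$ via $\mathcal{V}'=-H/K$, quote \cite{Li2021} for the characterization of $D_*$ through $H'(0)$, and settle the critical case with a moment inequality for $\int_0^\infty(1-s^2)s^{d+3}e^{-\phi_\alpha/D_*}\,ds$. The only differences are cosmetic: you obtain $\mathcal{V}^{(3)}(0)=0$ from the oddness of $H$ (the paper uses that $0$ is a minimizer of $\mathcal{V}$), and your Jensen/variance step $\mathbb{E}_\nu[s^4]>(\mathbb{E}_\nu[s^2])^2=1$ gives $H^{(3)}(0)<0$ in one stroke, whereas the paper argues $H^{(3)}(0)\le 0$ and $H^{(3)}(0)\neq 0$ separately, the latter via strict convexity of $t\mapsto\int_0^\infty s^t e^{-\phi_\alpha/D_*}\,ds$ --- which is the same moment inequality in a different guise.
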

\begin{proof}
If $D>D_*$, $\mathcal{V}''(0)>0$ follows from \eqref{lasteq} and  $H'(0)<0$. For $D=D_*$, we notice that $H'(0)=0$ by \cite[Proposition~2.1]{Li2021}, that is, $\mathcal{V}''(0)=0$.
 Since $0$ is a minimum of $\mathcal{V}$, we conclude that $\mathcal{V}^{(3)}(0)=0$. Since  
 \begin{align*}
     \mathcal{V}^{(4)}(0)= -\frac{H^{(3)}(0)}{K(0)}\geq 0
 \end{align*}
   because  $H^{(3)}(0) \leq 0$ and $H'(r)<0$ for $r>0$ by  \cite[Proposition~2.3]{Li2021}, it is enough to prove that $H^{(3)}(0) \neq 0$. 
For $d=1$, it is proved in  \cite[Lemma~2.1]{Li2021}. For $d\geq 2$,
using the relation \eqref{eqpolaire} and differentiating $H$ with respect to $r$, we obtain:
\begin{itemize}
    \item $H'(0)= \frac{\alpha}{D_*} |\mathbb{S}^{d-2}|\int_{0}^{\infty}(1-s^2)\,s^{d+1} \,e^{- \frac{\phi_{\alpha}(s)}{D_*}} \, ds$,
    \item $H^{(3)}(0)=\frac{\alpha}{D_*^3} |\mathbb{S}^{d-2}|\int_{0}^{\infty}(1-s^2)\,s^{d+3}\, e^{- \frac{\phi_{\alpha}(s)}{D_*}}  ds$.
\end{itemize}
The function 
\begin{align*}
    \varphi(t):=\int_{0}^{\infty} s^t\, e^{- \frac{\phi_{\alpha}(s)}{D_*}}  \,ds ,
\end{align*}
 is strictly convex because 
 \begin{align*}
     \varphi''(t)= \int_{0}^{\infty} \log(s)^2\,s^t \,e^{- \frac{\phi_{\alpha}(s)}{D_*}}  \,ds>0.
 \end{align*}
  We know from \cite[Proposition 2.1]{Li2021}  that $H'(0)=0$, that is $\varphi(d+1)= \varphi(d+3)$ and since $\varphi$ is strictly convex, we can't have $\varphi(d+1)=\varphi(d+3)=\varphi(d+5)$ that is $H^{(3)}(0) \neq 0$.
\end{proof}

\end{subsection}
\begin{subsection}{Case \texorpdfstring{$D<D_*$}{DD*}}
We recall that $\mathcal{F}_*=\inf{\mathcal{F}[f]}$.
\label{subsec:3.1}
\begin{proposition}
\label{prop1}
Under the assumptions of \Cref{thm2}, for all  $\beta>0$, 
    \begin{align*}
        \lim_{t\rightarrow+\infty}t^{\beta}\left(\mathcal{F}[f(t,.)]-\mathcal{F}_*\right) =0.
        \end{align*}

\end{proposition}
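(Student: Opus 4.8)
The plan is to establish a closed differential inequality for the quantity $E(t):=\mathcal{F}[f(t,\cdot)]-\mathcal{F}_*\geq 0$ and then integrate it. Starting from the identity~\eqref{equtile}, we write $E(t)=D\,\mathcal{H}[f\mid G_{u_f}]+\bigl(\mathcal{V}(u_f)-\mathcal{V}_*\bigr)$, where both terms on the right are nonnegative: the first by the Csisz\'ar--Kullback inequality (or just Jensen), the second by \Cref{lemmappendic1}. The entropy production identity~\eqref{fisher} gives $E'(t)=-\mathcal{I}[f]$, and by the Log-Sobolev inequality~\eqref{log_sob} applied with $u=u_f$ (valid with the uniform constant $\mathcal{K}_\epsilon$ once $u_f$ is within distance $\epsilon$ of $\mathcal{S}$, which holds for $t$ large by \Cref{lemmejfvois}), we get $\mathcal{I}[f]\geq D^2\mathcal{K}_\epsilon\,\mathcal{H}[f\mid G_{u_f}]$. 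So the entropy part of $E$ is controlled: $-E'(t)\geq D\mathcal{K}_\epsilon\cdot D\,\mathcal{H}[f\mid G_{u_f}]$.

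The remaining task is to control the potential part $\mathcal{V}(u_f)-\mathcal{V}_*$ by the entropy production as well, and this is where the Polyak--\L ojasiewicz estimate~\eqref{P_L_M_B} enters. For $t$ large, $u_f\in\mathcal{N}$, so \eqref{P_L_M_B} gives $\mathcal{V}(u_f)-\mathcal{V}_*\leq \tfrac{1}{2\mu_2}|\nabla\mathcal{V}(u_f)|^2$. Now I need to relate $|\nabla\mathcal{V}(u_f)|$ to quantities already present. Since $\mathcal{V}'(r)=-H(r)/K(r)$ and, by~\eqref{relationimp}, $\mathbf{H}(u)=u-u_{G_u}$ points along $u/|u|$ with magnitude $H(|u|)/K(|u|)$, we have $|\nabla\mathcal{V}(u_f)|=|u_f-u_{G_{u_f}}|$. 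This last quantity is exactly $\bigl|\int_{\mathbb{R}^d}v(f-G_{u_f})\,dv\bigr|$, which by H\"older (splitting $|f-G_{u_f}|=|f-G_{u_f}|^{3/4}|f-G_{u_f}|^{1/4}$ and using the uniform fourth-moment bound of \Cref{lemmedesmoments}, exactly as in the proof of \Cref{lemmejfvois}) is bounded by $C\,\|f-G_{u_f}\|_{L^1}^{3/4}$, hence by $C\,\mathcal{H}[f\mid G_{u_f}]^{3/8}$ via Csisz\'ar--Kullback. Thus $\mathcal{V}(u_f)-\mathcal{V}_*\leq C'\,\mathcal{H}[f\mid G_{u_f}]^{3/4}$.

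Combining, for $t\geq t_0$ we obtain $E(t)\leq D\,\mathcal{H}[f\mid G_{u_f}]+C'\mathcal{H}[f\mid G_{u_f}]^{3/4}$. Since $\mathcal{H}[f\mid G_{u_f}]\to 0$ (it is integrable in time by~\eqref{eqq2}, and one can argue it tends to $0$; alternatively note $E(t)\to 0$ from the already-known $\inf_u\mathcal{H}[f\mid G_u]\to0$), for $t$ large the $3/4$ power dominates, so $E(t)\leq C''\,\mathcal{H}[f\mid G_{u_f}]^{3/4}$, i.e. $\mathcal{H}[f\mid G_{u_f}]\geq c\,E(t)^{4/3}$. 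Feeding this into $-E'(t)\geq D^2\mathcal{K}_\epsilon\,\mathcal{H}[f\mid G_{u_f}]\geq c'\,E(t)^{4/3}$ yields the differential inequality $E'(t)\leq -c'\,E(t)^{4/3}$. Integrating $\frac{d}{dt}E^{-1/3}\geq \tfrac{c'}{3}$ gives $E(t)\leq C\,t^{-3}$ for large $t$, and then $t^\beta E(t)\to 0$ for every $\beta<3$. Since the statement asks this for \emph{all} $\beta>0$, one bootstraps: a polynomial rate this strong is already more than enough to conclude $\lim_{t\to\infty}t^\beta E(t)=0$ for every $\beta>0$ is \emph{not} immediate from a fixed power $-3$; here I would instead observe that once $E$ decays polynomially one can return to the inequality and note that the P--\L{} exponent near $\mathcal{S}$ is quadratic (the $\mu_1$ side of~\eqref{P_L_M_B}) and re-run the argument to upgrade the power, or simply remark that $E(t)=O(t^{-3})$ already implies the conclusion of the proposition after noting the proposition's "all $\beta$" is only used later through this polynomial bound.

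\textbf{Main obstacle.} The delicate point is the interpolation step converting $|\nabla\mathcal{V}(u_f)|^2$ into a power of the entropy: one must be careful that the exponent $3/4>1/2$ so that near equilibrium the entropy term $D\,\mathcal{H}$ (linear) absorbs the potential term, which is what makes $E(t)$ comparable to $\mathcal{H}[f\mid G_{u_f}]$ and closes the loop. Equally, justifying the use of the uniform Log-Sobolev and P--\L{} constants requires first invoking \Cref{lemmejfvois} to confine $u_f(t)$ to the neighborhood $\mathcal{N}$ for $t\ge t_0$, so the whole argument is only run for large times, which is harmless since we only claim an asymptotic statement.
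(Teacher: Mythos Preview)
Your overall strategy matches the paper's proof: decompose $E(t)=D\,\mathcal{H}[f\mid G_{u_f}]+(\mathcal{V}(u_f)-\mathcal{V}_*)$ via~\eqref{equtile}, invoke the Polyak--\L{}ojasiewicz bound~\eqref{P_L_M_B} and the identity $|\nabla\mathcal{V}(u_f)|=|u_f-u_{G_{u_f}}|$, control this by a power of $\mathcal{H}[f\mid G_{u_f}]$ through H\"older plus Csisz\'ar--Kullback, and close with~\eqref{log_sob} into a differential inequality $E'\le -c\,E^{p}$.

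The genuine gap is in the H\"older step, and you flagged it yourself. By committing to the fourth moment you lock in $|u_f-u_{G_{u_f}}|\le C\|f-G_{u_f}\|_{L^1}^{3/4}$, hence $\mathcal{V}(u_f)-\mathcal{V}_*\le C'\,\mathcal{H}^{3/4}$, which forces $p=4/3$ and yields only $E(t)=O(t^{-3})$. None of your proposed repairs work: iterating $E'\le -c\,E^{4/3}$ cannot improve the exponent $-3$; the $\mu_1$ side of~\eqref{P_L_M_B} points the wrong way for this purpose; and the claim that ``all $\beta$'' is inessential downstream is false, since \Cref{convdeJf} and \Cref{thm2} themselves assert the conclusion for every $\beta>0$. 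The fix the paper uses is simple and was within your reach: \Cref{lemmedesmoments} bounds \emph{all} moments $M_q$ uniformly in time, so one may leave the H\"older exponents $p,q\in(1,\infty)$ free and obtain $\mathcal{V}(u_f)-\mathcal{V}_*\le A_q\,(2\mathcal{H})^{1/p}$ for any $p>1$. This gives $E'\le -c_p\,E^{p}$ and hence $E(t)=O(t^{-1/(p-1)})$; letting $p\downarrow 1$ (equivalently $q\to\infty$) yields every $\beta>0$.

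A secondary point: your justification that $\mathcal{H}[f\mid G_{u_f}]\to 0$ is loose. Time-integrability~\eqref{eqq2} alone does not force convergence to zero, and the statement $\inf_{u\in\mathcal{S}}\mathcal{H}[f\mid G_u]\to 0$ is derived in the paper \emph{from} this proposition, so invoking it here would be circular. The clean argument (which the paper uses) is that the differential inequality $E'\le -D^2\mathcal{K}\,\varphi^{-1}(E)$, with $\varphi^{-1}$ increasing and $\varphi^{-1}(0)=0$, already forces $E(t)\to 0$; then $D\,\mathcal{H}[f\mid G_{u_f}]\le E(t)\to 0$ follows from~\eqref{equtile} and the nonnegativity of $\mathcal{V}(u_f)-\mathcal{V}_*$.
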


\begin{proof}
Let $\epsilon>0$. By \Cref{P-L} there exist positive constants $\mu_1$ and $\mu_2$ such that $|\mu_i-\mathcal{V}''(r(D))|<\epsilon$ and a neighborhood $\mathcal{N}=\mathcal{N}_{\epsilon}$ of $\mathcal{S}$ such that
\eqref{P_L_M_B} holds for all $\mathbf{u}\in \mathcal{N}$.
By \Cref{lemmejfvois}, that is $\lim_{t\rightarrow \infty}\mathrm{dist}(\mathbf{u}_{f}(t),\mathcal{S})=0$, there exists $t_0>0$ such that for all $t\geq t_0$ we have $\mathbf{u}_{f}(t) \in \mathcal{N}$. Hence, we have,
\begin{align}
\label{domV}
    \mathcal{V}(\mathbf{u}_{f}(t))-\mathcal{V}_* \leq \frac{1}{2\mu_2}|\nabla\mathcal{V}(\mathbf{u}_{f}(t))|^2 &=\frac{1}{2\mu_2}|\mathbf{u}_{f}-\mathbf{u}_{G_{\mathbf{u}_{f}}}|^2 \notag \\
    &\leq \frac{1}{2\mu_2} \left(\int_{\mathbb{R}^d} |v|^q\,|f+G_{\mathbf{u}_{f}}|dv \right)^{\frac{2}{q}}\vert \vert f-G_{\mathbf{u}_{f}}\vert \vert _{L^1(\mathbb{R}^d)}^{\frac{2}{p}} \notag\\
    & \leq A_q (2\mathcal{H}[f(t,.)|G_{\mathbf{u}_{f}(t)}])^{\frac{1}{p}} \text{ for all } t\geq t_0,
\end{align}
using \eqref{eq100}.
Here $p$ and $q$ are  Hölder conjugate exponents in $(1,+\infty)$ and  $$A_q= \frac{1}{2\mu_2}\sup_{t>t_0}{\left(\int_{\mathbb{R}^d} |v|^q|f+G_{\mathbf{u}_{f}}|\,dv \right)^{\frac{2}{q}}} $$  is finite according to \Cref{lemmedesmoments}. Using  \eqref{domV} and \eqref{equtile}, we obtain 
\begin{align}
\label{eqbof}
    \mathcal{F}[f]-\mathcal{F}_* \leq  \varphi(\mathcal{H}[f|G_{\mathbf{u}_{f}}]) \,\,\,\forall
t\geq t_0
\end{align}
where $\varphi(x)=D\,x + 2^{\frac{1}{p}}A_q \,x^{\frac{1}{p}}$ is monotone increasing. Hence,
\begin{align}
\label{gronwall}
    \frac{d}{dt} (\mathcal{F}[f(t,.)]-\mathcal{F}_*)=-\mathcal{I}[f(t,.)] 
    \leq  -D^2 \mathcal{K} \mathcal{H}[f(t,.)|G_{\mathbf{u}_{f}(t)}]
    \leq -D^2 \mathcal{K} \varphi^{-1}\left(\mathcal{F}[f(t,.)]-\mathcal{F}_*\right),
\end{align}
using \eqref{log_sob} with constant $\mathcal{K}=\mathcal{K}_{\epsilon}$ since $\mathrm{dist}(\mathbf{u}_{f}(t),\mathcal{S})<\epsilon$ for all $t \geq t_0$.
Since $\mathcal{F}[f(t,.)]-\mathcal{F}_* \geq 0$ and ~$\mathcal{F}[f]$ is nonincreasing, this proves that
\begin{align}
\label{limF}
    \lim_{t \rightarrow \infty}\mathcal{F}[f(t,.)]=\mathcal{F}_*.
\end{align}

Using \eqref{gronwall} and \eqref{limF}, we obtain  that there exists $t_1\geq t_0$
\begin{align}
\label{ineqadif}
    \frac{d}{dt} (\mathcal{F}[f(t,.)]-\mathcal{F}_*) \leq  -C(\mathcal{F}[f(t,.)]-\mathcal{F}_*)^{p} \, \,\,\,\forall t \geq t_1,
\end{align}
for some constant $C>0$. Integrating \eqref{ineqadif} with respect to $t\geq t_1$, we obtain the result with $\beta<1/(p-1)$.

\end{proof}

\end{subsection}
\begin{subsection}{Case \texorpdfstring{$D\geq D_*$}{DD*}}
\label{sec: D D*}
In \cite{Li2021}, the exponential decay of $f $ to $G_0$ with respect to the $L^2(\mathbb{R}^d,G_0^{-1}dv)$ is proved for $D>D_*$ and for initial data $f_{\mathrm{in}} \in L^2(\mathbb{R}^d,G_0^{-1}dv)$. In this section, following the same method as in 
\Cref{subsec:3.1}, we obtain that the convergence of $f$ to $G_0$ (in relative entropy) is still true by only assuming $f_{\mathrm{in}} \in L^1(\mathbb{R}^d)$ and $\mathcal{F}[f_{\mathrm{in}}]<+ \infty$. We only get an arbitrary algebraic rate of convergence. The limit case $D=D_*$ is also of interest. We know that for $D=D_*$, \Cref{PDE} has a unique stationary state $G_0$. To our knowledge, there is no convergence result in the literature even under the  condition  $f_{\mathrm{in}} \in L^2(G_0^{-1})$. We are going to prove this convergence in relative entropy.

\begin{proposition}
\label{prop2}
Let $D\geq D_*$ and
    let $f$ be the solution to the \Cref{PDE} with initial datum \mbox{~$f_{\mathrm{in}} \in L^1_+(\mathbb{R}^d)$}  such that $\mathcal{F}[f_{\mathrm{ini
    }}]<+\infty$ and $\|f_{\mathrm{in}} \|_{L_1(\mathbb{R}^d)}=1$.
    For all $\beta >0$ if $D>D_*$ and all $\beta \in (0,2)$ if $D=D_*$, then
    \begin{align*}
        \lim_{t\rightarrow \infty }t^{\beta}\left(\mathcal{F}[f(t,.)]-\mathcal{F}[G_0]\right)=0.
    \end{align*}
\end{proposition}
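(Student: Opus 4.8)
\textbf{Proof proposal for Proposition~\ref{prop2}.}

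The plan is to mirror the argument of \Cref{prop1}, using \Cref{P-L} in the appropriate regime ($D>D_*$ gives \eqref{Pol_D>D_*}, $D=D_*$ gives \eqref{polD=D_*}) together with \Cref{lemmejfvois} (which for $D\geq D_*$ gives $u_f(t)\to 0$), the log-Sobolev inequality \eqref{log_sob}, and the identity \eqref{equtile}, which also holds with $\mathcal{F}_*=\mathcal{F}[G_0]$ and $\mathcal{V}_*=\mathcal{V}(0)$ when $D\geq D_*$ since $0$ is then the unique minimizer of $\mathcal{V}$ by \Cref{lemmappendic1}. First I would fix $\epsilon>0$ and use \Cref{P-L} to get $\mu_1,\mu_2>0$ and a ball $\mathcal{N}=B(0,\delta)$ on which the relevant Polyak--Łojasiewicz estimate holds; by \Cref{lemmejfvois} there is $t_0$ with $u_f(t)\in\mathcal{N}$ for $t\geq t_0$. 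Then, exactly as in \eqref{domV}, I bound $|\nabla\mathcal{V}(u_f)|=|u_f-u_{G_{u_f}}|$ by Hölder and Csiszár--Kullback \eqref{eq100}, getting $|\nabla\mathcal{V}(u_f)|^2\leq A_q\,(2\mathcal{H}[f|G_{u_f}])^{1/p}$ with $A_q$ finite by \Cref{lemmedesmoments}.

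For $D>D_*$ the upper bound in \eqref{Pol_D>D_*} is $\mathcal{V}(u_f)-\mathcal{V}(0)\leq \frac{1}{2\mu_2}|\nabla\mathcal{V}(u_f)|^2$, so combining with \eqref{equtile} gives $\mathcal{F}[f]-\mathcal{F}[G_0]\leq \varphi(\mathcal{H}[f|G_{u_f}])$ with $\varphi(x)=Dx+2^{1/p}\tilde A_q\,x^{1/p}$ increasing, verbatim as in \eqref{eqbof}; then \eqref{fisher}, \eqref{log_sob}, monotonicity of $\mathcal{F}$ and boundedness below yield $\mathcal{F}[f(t,\cdot)]\to\mathcal{F}[G_0]$, hence $\mathcal{H}[f|G_{u_f}]\to 0$, hence eventually $\varphi(x)\leq C x^{1/p}$, producing the differential inequality $\frac{d}{dt}(\mathcal{F}[f]-\mathcal{F}[G_0])\leq -C(\mathcal{F}[f]-\mathcal{F}[G_0])^p$ as in \eqref{ineqadif}, which integrates to an arbitrary algebraic rate $t^{\beta}(\mathcal{F}[f]-\mathcal{F}[G_0])\to 0$ for every $\beta>0$. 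For $D=D_*$ the only change is the exponent: \eqref{polD=D_*} gives $\mathcal{V}(u_f)-\mathcal{V}(0)\leq \frac{6^{4/3}}{24\mu_2^{1/3}}|\nabla\mathcal{V}(u_f)|^{4/3}$, and feeding in $|\nabla\mathcal{V}(u_f)|^{2}\lesssim \mathcal{H}[f|G_{u_f}]^{1/p}$ yields $|\nabla\mathcal{V}(u_f)|^{4/3}\lesssim \mathcal{H}[f|G_{u_f}]^{2/(3p)}$, so $\mathcal{F}[f]-\mathcal{F}[G_0]\leq D\,\mathcal{H}[f|G_{u_f}]+C\,\mathcal{H}[f|G_{u_f}]^{2/(3p)}$; once $\mathcal{H}[f|G_{u_f}]\to 0$ the second term dominates and $\mathcal{H}[f|G_{u_f}]\geq c(\mathcal{F}[f]-\mathcal{F}[G_0])^{3p/2}$. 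With \eqref{log_sob} this gives $\frac{d}{dt}(\mathcal{F}[f]-\mathcal{F}[G_0])\leq -C(\mathcal{F}[f]-\mathcal{F}[G_0])^{3p/2}$, whose integration produces the rate $t^{-1/((3p/2)-1)}=t^{-2/(3p-2)}$; letting $p\downarrow 1$ the exponent $2/(3p-2)$ approaches $2$, which is exactly why one gets every $\beta\in(0,2)$ but cannot reach $\beta=2$.

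The main obstacle, and the reason the $D=D_*$ statement is restricted to $\beta<2$, is the degeneracy of the minimizer of $\mathcal{V}$: because $\Hessian\mathcal{V}(0)=0$ and only the fourth-order term is nondegenerate (\Cref{lemmeappendice3}), the Polyak--Łojasiewicz inequality \eqref{polD=D_*} relates $\mathcal{V}-\mathcal{V}(0)$ to $|\nabla\mathcal{V}|^{4/3}$ rather than $|\nabla\mathcal{V}|^2$, which degrades the power in the final differential inequality and caps the achievable algebraic rate. A minor technical point to handle carefully is that \eqref{fisher} and the manipulations around it require enough regularity and moment control on $f$; this is supplied by \Cref{lemmedesmoments} and the existence theory quoted in \Cref{sec:preliminary}, and can be invoked exactly as in the proof of \Cref{prop1}. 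One should also note that \Cref{lemmejfvois} is used here only in the form $u_f(t)\to 0$, which holds for all $D\geq D_*$ under the mere assumption $\mathcal{F}[f_{\mathrm{ini}}]<+\infty$, so no entropy condition $\mathcal{F}[f_{\mathrm{ini}}]<\mathcal{F}[G_0]$ is needed in this proposition.
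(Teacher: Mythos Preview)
Your proposal is correct and follows essentially the same route as the paper: the paper's proof simply lists the three ingredients (log-Sobolev near $0$, $u_f\to 0$ from \Cref{lemmejfvois}, and the Polyak--\L{}ojasiewicz estimates \eqref{Pol_D>D_*}/\eqref{polD=D_*}) and states that redoing the computations of \Cref{prop1} yields $\beta=1/(p-1)$ for $D>D_*$ and $\beta=2/(3p-2)$ for $D=D_*$, which is exactly what you derived. Your explicit tracking of the exponent $2/(3p)$ in the critical case and the resulting differential inequality with power $3p/2$ is precisely the computation the paper leaves implicit.
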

\begin{proof}
    The proof goes along the same steps as the proof of \Cref{prop1}:
    \begin{enumerate}
    \item A log-Sobolev inequality holds for $G_\mathbf{u}$ where $\mathbf{u} $ belongs to a neighborhood of $0$ and it is also the case since $-\log(G_0)$ is a bounded perturbation of a uniformly convex potential.
    \item $\mathbf{u}_{f} $ converges to $0$ according to the case  $D\geq D_*$ of \Cref{lemmejfvois}.
    \item Estimates \eqref{Pol_D>D_*} and \eqref{polD=D_*} apply. 
\end{enumerate}
Doing the same computations as in \Cref{prop1}, we conclude with $\beta< 1/(p-1)$ for $D>D_*$ and with $\beta< 2/(3p-2) $ for $D=D_*$.
\end{proof}

\begin{corollary}
    Under the assumptions of \Cref{prop2}, for all $\beta>0$ if $D>D_*$ and all $\beta \in (0,1)$ if $D=D_*$ then, 
    \begin{align*}
        \lim_{t\rightarrow \infty}t^{\beta} \mathcal{H}[f(t,.)|G_0]=0.
    \end{align*}
\end{corollary}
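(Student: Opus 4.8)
The plan is to deduce the corollary directly from \Cref{prop2} by combining the free-energy decay with the identity \eqref{eq22} (or rather its analogue at $u=0$) and the Polyak--Łojasiewicz lower bounds from \Cref{P-L}. Recall from \eqref{eqbase} with $u=0$ that
\begin{align*}
    \mathcal{F}[f]-\mathcal{F}[G_0]= D\,\mathcal{H}[f|G_0]-\tfrac{1}{2}|u_f|^2,
\end{align*}
so that $D\,\mathcal{H}[f|G_0] = (\mathcal{F}[f]-\mathcal{F}[G_0]) + \tfrac12 |u_f|^2$. Thus it suffices to control $|u_f(t)|^2$ by (a power of) $\mathcal{F}[f(t,.)]-\mathcal{F}[G_0]$ and invoke \Cref{prop2}.

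First I would note that by \Cref{lemmejfvois} (case $D\geq D_*$) we have $u_f(t)\to 0$, so for $t$ large enough $u_f(t)\in\mathcal{N}=B(0,\delta)$ and the lower bounds in \eqref{Pol_D>D_*}, \eqref{polD=D_*} apply, giving $\tfrac{\mu_1}{2}|u_f|^2\leq \mathcal{V}(u_f)-\mathcal{V}(0)$ if $D>D_*$ and $\tfrac{\mu_1}{24}|u_f|^4\leq \mathcal{V}(u_f)-\mathcal{V}(0)$ if $D=D_*$. On the other hand, from \eqref{equtile} specialized at the minimizer $0$ (so $\mathcal{F}_*=\mathcal{F}[G_0]$ and $\mathcal{V}_*=\mathcal{V}(0)$), we get $\mathcal{V}(u_f)-\mathcal{V}(0)\leq \mathcal{F}[f]-\mathcal{F}[G_0]$ because $D\,\mathcal{H}[f|G_{u_f}]\geq 0$. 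Hence $|u_f(t)|^2 \leq \tfrac{2}{\mu_1}(\mathcal{F}[f(t,.)]-\mathcal{F}[G_0])$ when $D>D_*$, and $|u_f(t)|^2 \leq \big(\tfrac{24}{\mu_1}\big)^{1/2}(\mathcal{F}[f(t,.)]-\mathcal{F}[G_0])^{1/2}$ when $D=D_*$.

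Plugging these into $D\,\mathcal{H}[f|G_0] = (\mathcal{F}[f]-\mathcal{F}[G_0]) + \tfrac12 |u_f|^2$, in the case $D>D_*$ we get $\mathcal{H}[f(t,.)|G_0] = O\big(\mathcal{F}[f(t,.)]-\mathcal{F}[G_0]\big)$, so \Cref{prop2} (which gives $t^{\beta}(\mathcal{F}[f]-\mathcal{F}[G_0])\to 0$ for all $\beta>0$) yields $t^{\beta}\mathcal{H}[f(t,.)|G_0]\to 0$ for all $\beta>0$. In the case $D=D_*$, the dominant term is $|u_f|^2 = O\big((\mathcal{F}[f]-\mathcal{F}[G_0])^{1/2}\big)$, so $\mathcal{H}[f(t,.)|G_0] = O\big((\mathcal{F}[f(t,.)]-\mathcal{F}[G_0])^{1/2}\big)$; since \Cref{prop2} gives $t^{\beta'}(\mathcal{F}[f]-\mathcal{F}[G_0])\to 0$ for every $\beta'\in(0,2)$, taking $\beta' = 2\beta$ shows $t^{\beta}\mathcal{H}[f(t,.)|G_0]\to 0$ for every $\beta\in(0,1)$. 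This matches the stated ranges of $\beta$.

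The only mildly delicate point — and the main thing to get right — is the loss of a square root in the degenerate case $D=D_*$: the relative entropy is comparable not to the free-energy gap itself but to its square root, because the velocity term $|u_f|^2$ is only quartically controlled by $\mathcal{V}(u_f)-\mathcal{V}(0)$ through a degenerate Hessian. This is exactly why the admissible exponent drops from $(0,2)$ for the free energy to $(0,1)$ for the entropy. Everything else is a direct substitution, so no separate hard estimate is needed.
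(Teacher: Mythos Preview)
Your proof is correct and follows essentially the same approach as the paper: derive $D\,\mathcal{H}[f|G_0]=(\mathcal{F}[f]-\mathcal{F}[G_0])+\tfrac12|u_f|^2$ from \eqref{eqbase}, bound $|u_f|^2$ (respectively $|u_f|^4$) by $\mathcal{V}(u_f)-\mathcal{V}(0)\leq \mathcal{F}[f]-\mathcal{F}[G_0]$ via the lower bounds in \Cref{P-L} and \eqref{equtile}, and conclude with \Cref{prop2}. Your explanation of why the admissible exponent halves at $D=D_*$ is the same mechanism the paper uses, just made more explicit.
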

\begin{proof}
 By \Cref{lemmejfvois}, we know that $\lim_{t \rightarrow \infty}\mathbf{u}_{f}(t)=0$. Using a Taylor expansion near $0$, using \Cref{lemmeappendice3}, we obtain that there exist positive constants $C_1$ and $C_2$ such that for $t $ large enough, 
  \begin{align*}
      |\mathbf{u}_{f}|^2\leq  C_1\left(\mathcal{V}(\mathbf{u}_{f})-\mathcal{V}(0) \right)\leq C_1\left(\mathcal{F}(f)-\mathcal{F}(G_0) \right)  \text{ if } D>D_*,
  \end{align*} 
\begin{align*}
    |\mathbf{u}_{f}|^4 \leq  C_2\left(\mathcal{V}(\mathbf{u}_{f})-\mathcal{V}(0) \right)\leq C_2\left(\mathcal{F}(f)-\mathcal{F}(G_0) \right) \text{ if } D=D_*,
\end{align*}
where we used the inequality $\mathcal{V}(\mathbf{u}_{f})-\mathcal{V}(0)\leq \mathcal{F}(f)-\mathcal{F}(G_0)$ which is a consequence of \eqref{equtile}.
This concludes the proof using \Cref{prop2} and \eqref{eq22}.
\end{proof}

\end{subsection}

\begin{section}{Convergence of \texorpdfstring{$\mathbf{u}_{f}$}{uf} and proof of~\texorpdfstring{\Cref{thm2}}{thm2}}

\label{sec:sec4}
Since we know from \Cref{prop1} and \Cref{lemmejfvois} that~$ \lim_{t\rightarrow \infty}\mathcal{F}[f]=\mathcal{F}_*$  and  \\ \hbox{~$ \lim_{t \rightarrow \infty}{\mathrm{dist}(\mathbf{u}_{f}, \mathcal{S})}=0$}, we deduce from \eqref{eq22} that $\lim_{t \rightarrow \infty}\inf_{\mathbf{u}\in \mathcal{S}}\mathcal{H}[f(t,.)|G_\mathbf{u}]=0$. We now want to prove that $f$ converges to a unique stationary solution (in relative entropy). Using \eqref{eq22} this is equivalent to proving that $\mathbf{u}_{f}$ converges.
 We proved in \Cref{subsec:convavspeed} that $\mathbf{u}_{f}$ converges to $\mathcal{S}$, which means that $\lim_{t\rightarrow\infty}|\mathbf{u}_{f}(t)|=r(D), $ but we did not prove yet that $\mathbf{u}_{f}(t)$ converges as $t \rightarrow + \infty$ to some fixed $\mathbf{u}\in \mathcal{S}$.

    \begin{proposition}
    \label{convdeJf}
       Under the assumptions of \Cref{thm2},  there exists $\mathbf{u}_{\infty} \in \mathcal{S}$ such that for all $\beta >0$, 
        \begin{align*}
            \lim_{t\rightarrow \infty}t^{\beta}|\mathbf{u}_{f}(t)- \mathbf{u}_{\infty}| =0.
        \end{align*}
    \end{proposition}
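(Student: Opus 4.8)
The plan is to split $u_f(t)$ into its radial and angular parts. By \Cref{lemmejfvois} we already know $|u_f(t)|\to r(D)>0$, so for $t$ large the unit vector $\omega(t):=u_f(t)/|u_f(t)|\in\mathbb{S}^{d-1}$ is well defined and $C^1$ in $t$. It then suffices to show that $\omega(t)$ converges to some $\omega_\infty$ faster than any power of $t$: setting $u_\infty:=r(D)\,\omega_\infty\in\mathcal{S}$, the triangle inequality gives $|u_f(t)-u_\infty|\le\mathrm{dist}(u_f(t),\mathcal{S})+r(D)\,|\omega(t)-\omega_\infty|$, and the first term is already $o(t^{-\beta})$ for every $\beta>0$: indeed, \eqref{equtile} (dropping the nonnegative term $D\,\mathcal{H}[f|G_{u_f}]$) gives $\mathcal{V}(u_f)-\mathcal{V}_*\le\mathcal{F}[f]-\mathcal{F}_*$, and the lower bound in \eqref{P_L_M_B} combined with \Cref{prop1} then forces $\mathrm{dist}(u_f(t),\mathcal{S})^2\le\tfrac{2}{\mu_1}(\mathcal{F}[f(t,\cdot)]-\mathcal{F}_*)=o(t^{-\beta})$ for every $\beta$.

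The core of the argument is an identity for $u_f'$. Differentiating the first moment of \eqref{PDE}, as in the proof of \Cref{lemmejfvois}, gives $u_f'=\alpha u_f-\alpha\int_{\mathbb{R}^d}v|v|^2 f\,dv$. Using that $G_{u_f}$ is stationary, i.e. $\int_{\mathbb{R}^d}(\alpha|v|^2 v+(1-\alpha)v-u_f)\,G_{u_f}\,dv=0$, together with $u_{G_{u_f}}=u_f-\mathbf{H}(u_f)$, one computes $\alpha\int v|v|^2 G_{u_f}\,dv=\alpha u_f+(1-\alpha)\mathbf{H}(u_f)$, and therefore $u_f'=(\alpha-1)\mathbf{H}(u_f)-\alpha\int_{\mathbb{R}^d}v|v|^2(f-G_{u_f})\,dv$. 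The key point is that by \eqref{relationimp} the vector $\mathbf{H}(u_f)$ is parallel to $u_f$, hence to $\omega$, so it contributes nothing to the component of $u_f'$ orthogonal to $\omega$. Since $\omega'=|u_f|^{-1}\big(u_f'-\omega(\omega\cdot u_f')\big)$ is orthogonal to $\omega$, this yields $|\omega'(t)|\le\alpha\,|u_f(t)|^{-1}\big|\int_{\mathbb{R}^d}v|v|^2(f-G_{u_f})\,dv\big|\le C\int_{\mathbb{R}^d}|v|^3\,|f-G_{u_f}|\,dv$ for $t$ large.

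Next I would estimate the right-hand side. Hölder's inequality with conjugate exponents $(p,q)$ and the uniform moment bounds of \Cref{lemmedesmoments} (which also bound all moments of $G_{u_f}$, since $|u_f|$ is bounded and $G_u$ has quartic-exponential decay) give $\int|v|^3|f-G_{u_f}|\,dv\le C\,\|f-G_{u_f}\|_{L^1(\mathbb{R}^d)}^{1/p}$, whence by \eqref{eq100} one gets $\int|v|^3|f-G_{u_f}|\,dv\le C\,\mathcal{H}[f|G_{u_f}]^{1/(2p)}$. Finally \eqref{equtile} gives $D\,\mathcal{H}[f|G_{u_f}]\le\mathcal{F}[f]-\mathcal{F}_*$, which by \Cref{prop1} decays faster than any power of $t$; hence $|\omega'(t)|=o(t^{-\beta})$ for every $\beta>0$. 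In particular $\omega'\in L^1(\mathbb{R}_+)$, so $\omega(t)$ converges to a limit $\omega_\infty$, and $|\omega(t)-\omega_\infty|\le\int_t^\infty|\omega'(s)|\,ds=o(t^{-\beta})$ for every $\beta$. Combined with the bound on $\mathrm{dist}(u_f,\mathcal{S})$ from the first paragraph, this gives $t^\beta|u_f(t)-u_\infty|\to0$ for all $\beta>0$.

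I expect the main obstacle to be the identity for $u_f'$ in the second step, namely showing that the tangential part of $u_f'$ is not merely $O(1)$ but is controlled by $f-G_{u_f}$; this is exactly what makes the direction $\omega$ freeze, and it rests on the radial structure of the compatibility map $\mathbf{H}$ in \eqref{relationimp}. Once this identity is established, the remaining steps are a routine interpolation (via \Cref{lemmedesmoments} and \eqref{eq100}) and the integration of a quantity that decays faster than any polynomial thanks to \Cref{prop1}.
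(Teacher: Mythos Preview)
Your proof is correct, but it takes a somewhat different route from the paper. The paper bounds the full derivative $|u_f'|$ directly: starting from $u_f'=\alpha u_f-\alpha\int v|v|^2 f\,dv$, it compares $f$ to $G_{u_*}$ with $u_*(t)=r(D)\,u_f(t)/|u_f(t)|\in\mathcal{S}$ (rather than to $G_{u_f}$), uses the identity $u_*=\int|v|^2 v\,G_{u_*}\,dv$ valid for $u_*\in\mathcal{S}$, and then estimates $|u_f-u_*|=\mathrm{dist}(u_f,\mathcal{S})$ via \eqref{P_L_M_B} and $\|f-G_{u_*}\|_{L^1}$ via \eqref{eq100} and \eqref{eq22}. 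This yields $|u_f'(t)|\le N_\beta\,t^{-\beta/(2p)}$ and the result follows by integration. Your approach instead decomposes $u_f$ into radial and angular parts, handles the radial part exactly as the paper does, and for the angular part compares to $G_{u_f}$, exploiting that $\mathbf{H}(u_f)$ is parallel to $u_f$ so the defect from stationarity is purely radial and drops out of $\omega'$. Both routes use the same ingredients (\Cref{prop1}, \Cref{lemmedesmoments}, \eqref{eq100}, and the Polyak--{\L}ojasiewicz bound \eqref{P_L_M_B}); yours is slightly more geometric and makes the role of the radial symmetry of $\mathbf{H}$ explicit, while the paper's is a little more economical since it avoids the splitting altogether.

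One minor point of wording: you write ``using that $G_{u_f}$ is stationary,'' but $G_{u_f}$ is \emph{not} a stationary solution of \eqref{PDE} unless $\mathbf{H}(u_f)=0$. The identity you actually need, $\int(\alpha|v|^2 v+(1-\alpha)v-u)\,G_u\,dv=0$, is simply the statement $\int(\nabla\psi_\alpha+v-u)\,G_u\,dv=0$, which holds for \emph{every} $u\in\mathbb{R}^d$ by integration by parts against $D\nabla G_u+(\nabla\psi_\alpha+v-u)G_u=0$. With this fixed, your computation $\alpha\int v|v|^2 G_{u_f}\,dv=\alpha u_f+(1-\alpha)\mathbf{H}(u_f)$ and the resulting expression for $u_f'$ are correct.
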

\begin{proof}
    Using
    \begin{align}
    \label{derivéuf}
        \frac{d \mathbf{u}_{f}}{dt}=\alpha \mathbf{u}_{f}- \alpha \int_{\mathbb{R}^d}v|v|^2fdv 
    \end{align}
    and that for all $\mathbf{u} \in \mathcal{S}$ we have $\mathbf{u}= \int_{\mathbb{R}^d}|v|^2\,v G_{\mathbf{u}}\,dv$, we obtain
    \begin{align*}
     \left|  \frac{d\mathbf{u}_{f} }{dt}(t)\right|& \leq  \alpha \left| \mathbf{u}_{f} -\mathbf{u}_{} \right| + \alpha \int_{\mathbb{R}^d}  |v|^3 \, | f-G_{\mathbf{u}_{}}|\, dv \\
     &\leq  \alpha \left| \mathbf{u}_{f} -\mathbf{u}_{} \right| + \alpha \left( \int_{\mathbb{R}^d} \, |v|^{3q} | f-G_{\mathbf{u}_{}}| \,dv \right)^{\frac{1}{q}} \left( \int_{\mathbb{R}^d} | f-G_{\mathbf{u}_{}}| dv \,  \right)^{\frac{1}{p}} \\
     & \leq  \alpha \left| \mathbf{u}_{f}(t) -\mathbf{u}_{} \right| + C_q \,\lVert f(t,.)-G_{\mathbf{u}_{}}\rVert_{L^1(\mathbb{R}^d)}^{\frac{1}{p}}.
\end{align*}
Here, $p$ and $q$ are Hölder conjugate exponents in $(1,+\infty)$ and $$C_q = \sup_{ t >t_0}{ \alpha \left( \int_{\mathbb{R}^d}  |v|^{3q} | f-G_{\mathbf{u}_{}}| dv \right)^{\frac{1}{q}}} < +\infty$$
because $f$ has finite moments for $t>0$ by \Cref{lemmedesmoments}.
Since this is true for all $\mathbf{u}_{} \in \mathcal{S}$, we can choose 
\begin{align}
\label{ustar}
    \mathbf{u}_{*}(t)= r(D) \frac{\mathbf{u}_{f}(t)}{|\mathbf{u}_{f}(t)|}. 
\end{align}
Since $\lim_{t\rightarrow \infty}\mathrm{dist}(\mathbf{u}_{f},\mathcal{S})=0$, there exists $t_0>0$ such that for all $t\geq t_0$, $\mathbf{u}_{f}(t)\in \mathcal{N}$ where $\mathcal{N}$ is a neighborhood of $\mathcal{S}$ where  \eqref{P_L_M_B} applies. Hence, $\forall t \geq t_0$,
\begin{align*}
     |\mathbf{u}_{f}(t)-\mathbf{u}_{*}(t)|^2 =\mathrm{dist}(\mathbf{u}_{f}(t),\mathcal{S})^2\leq \frac{2}{\mu_1}(\mathcal{V}(\mathbf{u}_{f}(t))-\mathcal{V}_*) \leq \frac{2}{\mu_1}(\mathcal{F}[f(t,.)]- \mathcal{F}_*) 
\end{align*}
and 
\begin{align*}
    ||f(t,.)-G_{\mathbf{u}_{*}(t)} ||_{L^1(\mathbb{R}^d)}^{\frac{1}{p}} \leq \mathcal{H}[f(t,.)|G_{\mathbf{u}_{*}(t)}]^{\frac{1}{2p}} \leq \left( \frac{1}{D}(\mathcal{F}[f(t,.)]- \mathcal{F}_*)+ \frac{1}{2D} |\mathbf{u}_{f}(t)-\mathbf{u}_{*}(t)|^2\right)^{\frac{1}{2p}}
\end{align*}
by \eqref{eq100} and \eqref{eq22}.
Using \Cref{prop1}, we conclude that for all $\beta>0$, there exists a constant ~$N_{\beta}>0$ such that
\begin{align}
\label{deriv0}
    \left|  \frac{d\mathbf{u}_{f}}{dt}\right|\leq N_{\beta}t^{-\frac{\beta}{2p}}.
\end{align}
Integrating this inequality on $(t,+\infty)$, we obtain \Cref{convdeJf} with $\mathbf{u}_{\infty} \in \mathcal{S}$ because of \Cref{lemmejfvois}.
\end{proof}

\begin{proof}[Proof of \Cref{thm2}.]
Let $\mathbf{u}_{\infty}=\lim_{t\rightarrow \infty}\mathbf{u}_{f}(t)$ which is well defined by \Cref{convdeJf}. Using \eqref{eq22}, that is, 
\begin{align*}
    \mathcal{H}[f(t,.)|G_{\mathbf{u}_{\infty}}]=\frac{1}{D}\left(\mathcal{F}[f(t,.)]-\mathcal{F}_*+\frac{1}{2}|\mathbf{u}_{f}-\mathbf{u}_*|^2 \right),
\end{align*}
we conclude thanks to \Cref{convdeJf} and \Cref{prop1}.
\end{proof}

\end{section}

\begin{section}{Linearization of the evolution operator and proof of~\texorpdfstring{\Cref{thm3}}{thm3}.}
\label{sec:: section 5}
We know from the previous section that there exists an average speed limit $\mathbf{u}_{\infty} \in \mathcal{S}$ for $\mathbf{u}_{f}$ and that $f$ converges in relative entropy to $G_{\mathbf{u}_{\infty}}$. It is natural to consider the linearized evolution operator around the steady state $G_{\mathbf{u}_{\infty}}$. However, as it is mentioned in \cite{Li2021}, the linearized operator around $G_{\mathbf{u}_{\infty}}$ does not have a spectral gap if $\mathbf{u}_{f}$ converges to $\mathbf{u}_{\infty}$ tangentially to $\mathcal{S}$. We will avoid this problem by considering the evolution around $G_*:=G_{\mathbf{u}_*}$ with $\mathbf{u}_*$ defined by \eqref{ustar}. Note that $G_*$ depends on $t$.

\begin{subsection}{Perturbation around a stationary state.}

   First of all, let us define the quadratic forms associated to the free energy $\mathcal{F}$ and the Fisher information $\mathcal{I}$ as in \cite{Li2021}.
    For $\mathbf{u}_{} \in \mathcal{S}$ and
\begin{align*}
     g \in X_\mathbf{u}:=  \left\{ g \in L^2(\mathbb{R}^d,G_{\mathbf{u}_{}}dv) \text{ :} \int_{\mathbb{R}^d} g\,G_{\mathbf{u}_{\,}}dv=0 \right\},
\end{align*}
  we define:
    \begin{itemize}
        \item $Q_{1,\mathbf{u}_{}}[g]:= \lim_{\epsilon \rightarrow 0}{\frac{2}{\epsilon^2}\big( \mathcal{F}[G_{\mathbf{u}_{}}(1+ \epsilon g)]-\mathcal{F}_*\big)}=D \int_{\mathbb{R}^d} g^2 G_{\mathbf{u}_{}}dv -D^2 |\mathbf{v}_g|^2, $
        \item $Q_{2,\mathbf{u}_{}}[g]:= \lim_{\epsilon \rightarrow 0}{\frac{1}{\epsilon^2} \mathcal{I}[G_{\mathbf{u}_{}}(1+ \epsilon g)]}=D^2 \int_{\mathbb{R}^d} |\nabla g -\mathbf{v}_g|^2 G_{\mathbf{u}_{}}dv,$
    \end{itemize}
where \begin{align}
\label{vg}
    \mathbf{v}_g= \frac{1}{D} \int_{\mathbb{R}^d} vgG_{\mathbf{u}}dv.
\end{align} For $\mathbf{u} \in \mathcal{S}$ fixed, we define on $X_\mathbf{u} $ the bilinear form associated to the quadratic form $Q_{1,\mathbf{u}}$: 
 \begin{align}
 \label{definitionQ1}
    \langle g_1,g_2\rangle_{\mathbf{u}} = D \int_{\mathbb{R}^d}g_1g_2 G_{\mathbf{u}_{}}dv -D^2 v_{g_1}v_{g_2}.
    \end{align}
Since $G_{\mathbf{u}}$ is a minimum of $\mathcal{F}$, the bilinear form $\langle \cdot ,\cdot \rangle_{\mathbf{u}}$ is nonnegative. We denote by
\begin{align}
\label{normL2classique}
    \lVert g\rVert_{2,\mathbf{u}}:= \sqrt{\int_{\mathbb{R}^d}g^2\,G_\mathbf{u}\,dv}
\end{align} the classical weighted $L^2$ norm on $X_\mathbf{u}$. The following lemma is proved in \cite[Section~5.1]{Li2021} for $\mathbf{u}=0$. By adapting the computations, we get the following result for any  $\mathbf{u} \in \mathbb{R}^d.$

\begin{lemma}
   Let $\mathbf{u}\in \mathcal{S}$.  Then, $f$ is a solution of \eqref{PDE} if and only if $g=(f-G_\mathbf{u})G_\mathbf{u}^{-1}$ solves
   \begin{align}
\label{evoldeg}
    \partial_t g = \mathcal{L}_\mathbf{u}g + \mathcal{R}_\mathbf{u}g
\end{align}
with 
\begin{align}
\label{Lu}
    \mathcal{L}_\mathbf{u}g= D \Delta g +( \nabla \psi_{\alpha}+v-\mathbf{u})(\mathbf{v}_g-\nabla g)
\end{align}
and 
\begin{align}
\label{Ru}
    \mathcal{R}_\mathbf{u}g= -\mathbf{v}_g \cdot [D \nabla g-( \nabla \psi_{\alpha}+v-\mathbf{u})g].
\end{align}
Here, $\mathcal{L}_\mathbf{u}$ is a linear operator defined on $X_\mathbf{u}$ and $\mathcal{R}_\mathbf{u}$ is the nonlinear remaining term. We recall that ~$\mathbf{v}_g$ is defined by \eqref{vg} and $\psi_{\alpha}$ by \eqref{PDE}.
\end{lemma}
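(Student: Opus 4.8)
The plan is to substitute $f = G_u(1+g)$ directly into \eqref{PDE}, expand, and sort the outcome into a part that is linear in $g$ and a quadratic remainder. Fix $u\in\mathcal{S}$ once and for all. Two structural facts will be used throughout: $G_u$ is independent of $t$ and strictly positive, and, since $G_u=\mathcal{Z}(u)^{-1}e^{-\phi_u/D}$ with $\phi_u(v)=\tfrac12|v-u|^2+\psi_\alpha(v)$, one has
\[
D\,\nabla G_u = -(\nabla\psi_\alpha+v-u)\,G_u .
\]
Write $b_u:=\nabla\psi_\alpha+v-u=\nabla\phi_u$ as a local abbreviation. Because $G_u>0$ everywhere, the correspondence $f\mapsto g=(f-G_u)G_u^{-1}$ is a bijection with inverse $g\mapsto G_u(1+g)$, so it suffices to check that the two equations are equivalent after multiplying (resp.\ dividing) by $G_u$; in particular nothing is lost in this reduction.

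First I would record the effect of the substitution on the nonlinear average $u_f$. Since $u\in\mathcal{S}$, the compatibility equation \eqref{compatibilityequation} gives $u_{G_u}=u$, and, using $\int g\,G_u\,dv=0$ (equivalently $\int f\,dv=1$),
\[
u_f=\int_{\mathbb{R}^d} v\,f\,dv = u + \int_{\mathbb{R}^d} v\,g\,G_u\,dv = u + D\,v_g ,
\]
with $v_g$ as in \eqref{vg}. Hence $\nabla\psi_\alpha+v-u_f = b_u - D\,v_g$; this is the one place where the hypothesis $u\in\mathcal{S}$ is genuinely used.

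Next I would exploit the cancellation produced by the Gibbs weight. With $f=G_u(1+g)$,
\[
D\,\nabla f + b_u\,f = D(1+g)\nabla G_u + D\,G_u\nabla g + b_u\,G_u(1+g) = D\,G_u\,\nabla g ,
\]
so that $D\Delta f+\nabla\cdot(b_u f) = D\,\nabla\cdot(G_u\nabla g) = G_u\,(D\Delta g - b_u\cdot\nabla g)$, using $D\nabla G_u=-b_u G_u$ once more. The remaining piece of the drift, $-D\,v_g\cdot\nabla f$ (with $v_g$ constant in $v$), becomes $G_u\big[(b_u\cdot v_g)(1+g)-D\,v_g\cdot\nabla g\big]$ after inserting $\nabla f=-\tfrac1D(1+g)b_u G_u+G_u\nabla g$. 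Adding these two contributions, dividing by $G_u$, and using $\partial_t f = G_u\,\partial_t g$, equation \eqref{PDE} turns into
\[
\partial_t g = D\Delta g - b_u\cdot\nabla g + (b_u\cdot v_g)(1+g) - D\,v_g\cdot\nabla g .
\]
Collecting the terms linear in $g$ as $\mathcal{L}_u g = D\Delta g + b_u\cdot(v_g-\nabla g)$ and the quadratic ones as $\mathcal{R}_u g = -v_g\cdot(D\nabla g - b_u g)$ gives exactly \eqref{evoldeg}; every step is reversible, so the equivalence holds in both directions.

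I do not expect a real obstacle here: the argument is bookkeeping. The two points requiring care are the identity $u_f-u=D\,v_g$ (the only place $u\in\mathcal{S}$ enters, via $u_{G_u}=u$) and the consistency of the splitting $\mathcal{L}_u+\mathcal{R}_u$ — one must remember that $v_g$ is itself linear in $g$, so that $b_u\cdot v_g$ belongs to the linear part $\mathcal{L}_u$ while $v_g\cdot b_u\,g$ and $v_g\cdot\nabla g$ are quadratic and belong to $\mathcal{R}_u$.
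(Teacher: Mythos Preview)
Your computation is correct. The paper itself does not give a proof of this lemma --- it is stated and then followed immediately by the existence proposition --- so there is nothing to compare against; your direct substitution $f=G_u(1+g)$, together with the identities $D\nabla G_u=-b_uG_u$ and $u_f=u+D v_g$ (the latter using $u\in\mathcal{S}$), is exactly the intended bookkeeping, and your split into linear and quadratic parts matches \eqref{Lu}--\eqref{Ru}.
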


\begin{proposition}
\label{exuniciteL2}
    If $g_{\mathrm{in}}  \in X_\mathbf{u}$. Then there exists a unique distributional solution of \eqref{evoldeg} in $X_T:=  L^2(0,T;V) \cap  C([0,T),H)\cap H^1(0,T;V')$ with $H=X_\mathbf{u}$ and $V=H^1(\mathbb{R}^d,G_\mathbf{u}\,dx)$.
\end{proposition}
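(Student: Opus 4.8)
The plan is to recast \eqref{evoldeg} as a semilinear parabolic equation in the Gelfand triple $V\hookrightarrow H\hookrightarrow V'$, with $H=X_u$ carrying the weighted norm $\lVert\cdot\rVert_{2,u}$ of \eqref{normL2classique} and $V=H^1(\mathbb{R}^d,G_u\,dx)\cap X_u$, and then to combine the classical well-posedness theory for linear parabolic equations in variational form with a fixed point argument for the nonlinear remainder $\mathcal{R}_u$. The preliminary step is to integrate by parts so as to eliminate the cubically growing drift $\nabla\psi_\alpha+v-u=-D\nabla\log G_u$ from the weak formulation: using $\int_{\mathbb{R}^d}(\nabla\psi_\alpha+v-u)\varphi\,G_u\,dv=D\int_{\mathbb{R}^d}G_u\nabla\varphi\,dv$ one finds, for $g,h\in V$,
\begin{align*}
-\int_{\mathbb{R}^d}(\mathcal{L}_u g)\,h\,G_u\,dv &= D\int_{\mathbb{R}^d}\nabla g\cdot\nabla h\,G_u\,dv - D\,v_g\cdot\int_{\mathbb{R}^d}\nabla h\,G_u\,dv =: a_u(g,h),\\
\int_{\mathbb{R}^d}(\mathcal{R}_u g)\,h\,G_u\,dv &= D\,v_g\cdot\int_{\mathbb{R}^d}g\,\nabla h\,G_u\,dv .
\end{align*}
Since $G_u$ has quartic–exponential tails, all moments $\int_{\mathbb{R}^d}|v|^k G_u\,dv$ are finite, so $|v_g|\le C_u\lVert g\rVert_{2,u}$ with $C_u<\infty$; hence $a_u$ is bounded on $V\times V$ and satisfies a Gårding inequality $a_u(g,g)+\lambda_0\lVert g\rVert_{2,u}^2\ge\tfrac{D}{2}\lVert g\rVert_V^2$ for a suitable $\lambda_0\ge0$, while $\mathcal{R}_u$ extends to a bounded bilinear map obeying $\lVert\mathcal{R}_u g\rVert_{V'}\le C\lVert g\rVert_{2,u}^2$, in particular locally Lipschitz from $H$ to $V'$. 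One also reads off from these formulas that $\mathcal{L}_u$ and $\mathcal{R}_u$ preserve $\int_{\mathbb{R}^d}g\,G_u\,dv=0$, so the dynamics stays in $X_u$.

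Granting this, the classical variational theory gives, for every $g_{\mathrm{ini}}\in H$ and every $F\in L^2(0,T;V')$, a unique $g\in X_T=L^2(0,T;V)\cap C([0,T);H)\cap H^1(0,T;V')$ with $g(0)=g_{\mathrm{ini}}$ solving $\partial_t g=\mathcal{L}_u g+F$ in $V'$ for a.e.\ $t$, together with $\lVert g\rVert_{X_T}\le C\big(\lVert g_{\mathrm{ini}}\rVert_{2,u}+\lVert F\rVert_{L^2(0,T;V')}\big)$. I would then solve \eqref{evoldeg} on a short interval $[0,T_0]$ by Banach's fixed point theorem applied to $w\mapsto g$, where $g$ solves the linear problem with source $F=\mathcal{R}_u w$: on the ball of radius $M\approx\lVert g_{\mathrm{ini}}\rVert_{2,u}$ in $C([0,T_0];H)$ this is a self-map and a contraction once $T_0$ is small, since $\lVert\mathcal{R}_u w-\mathcal{R}_u\tilde w\rVert_{L^2(0,T_0;V')}\le C\sqrt{T_0}\,M\,\lVert w-\tilde w\rVert_{C([0,T_0];H)}$ by the bilinear bound. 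Uniqueness in the full space $X_{T_0}$ (not merely in the ball) follows by testing the equation for the difference of two solutions against that difference, absorbing the $V$-norm through the Gårding inequality and the bilinear estimate, and applying Grönwall's lemma. The same argument produces a maximal solution on $[0,T_{\max})$ with the blow-up alternative: if $T_{\max}<\infty$ then $\lVert g(t)\rVert_{2,u}\to\infty$ as $t\uparrow T_{\max}$.

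The step I expect to be the real obstacle is excluding blow-up, i.e.\ promoting the local solution to $X_T$ for every $T$. At the abstract level one only has $|v_g|\le C_u\lVert g\rVert_{2,u}$, and substituting this into the energy identity $\tfrac12\tfrac{d}{dt}\lVert g\rVert_{2,u}^2=-a_u(g,g)+D\,v_g\cdot\int_{\mathbb{R}^d}g\,\nabla g\,G_u\,dv$ gives only the Riccati-type bound $\tfrac{d}{dt}\lVert g\rVert_{2,u}^2\le C(1+\lVert g\rVert_{2,u}^2)\lVert g\rVert_{2,u}^2$, which a priori allows finite-time blow-up. The resolution is to observe that $v_g$ is in fact a difference of mean velocities: with $f:=G_u(1+g)$ one has $v_g=\tfrac1D\int_{\mathbb{R}^d}v(f-G_u)\,dv=\tfrac1D\big(u_f-u_{G_u}\big)$, and $f$ is a distributional solution of \eqref{PDE}; in the setting of \Cref{thm3}, $g_{\mathrm{ini}}=(f_{\mathrm{ini}}-G_u)/G_u$ with $f_{\mathrm{ini}}\in L^1_+(\mathbb{R}^d)$ of unit mass and finite free energy, so by the uniqueness result of \cite{bolley2010stochastic} this $f$ coincides with the solution considered in \Cref{sec:preliminary}, and $\sup_{t>0}M_1(t)<\infty$ by \Cref{lemmedesmoments}. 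Hence $|v_g(t)|\le D^{-1}\big(\sup_{s>0}M_1(s)+|u_{G_u}|\big)$ is bounded uniformly in time, the term $D\,v_g\cdot\int_{\mathbb{R}^d}g\,\nabla g\,G_u\,dv$ is controlled by $\tfrac{D}{4}\lVert\nabla g\rVert_{2,u}^2+C\lVert g\rVert_{2,u}^2$, and the energy identity closes into the \emph{linear} inequality $\tfrac{d}{dt}\lVert g\rVert_{2,u}^2\le C\lVert g\rVert_{2,u}^2$; Grönwall's lemma then bounds $\lVert g(t)\rVert_{2,u}$ on every bounded interval, so $T_{\max}=+\infty$ and $g\in X_T$ for all $T$. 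Summarising, the two points to be careful with are that $\mathcal{R}_u$ maps into $V'$ rather than $H$, which is why the variational (rather than a mild-solution) framework is needed, and the recognition of $v_g$ as a mean-velocity difference, which downgrades the apparently supercritical nonlinearity to a harmless one via the moment \Cref{lemmedesmoments}.
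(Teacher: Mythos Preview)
The paper states \Cref{exuniciteL2} without proof, so there is no argument to compare against; your proposal supplies what the paper leaves implicit. The overall strategy---integrate the drift $\nabla\psi_\alpha+v-u=-D\nabla\log G_u$ by parts to obtain a bounded bilinear form $a_u$ on $V\times V$ satisfying a G{\aa}rding inequality, invoke Lions' theorem for the linear part, run a Banach fixed point on the bilinear remainder $\mathcal{R}_u$ (which indeed maps $H$ into $V'$ with $\lVert\mathcal{R}_u g\rVert_{V'}\le C\lVert g\rVert_{2,u}^2$), and continue via a blow-up alternative---is the standard and correct route. Your weak formulations of $\mathcal{L}_u$ and $\mathcal{R}_u$ are right, as is the observation that both operators preserve the zero-mean constraint, so the dynamics stays in $X_u$.

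The one point that deserves comment is the passage from local to global existence. You correctly note that the raw energy identity gives only a Riccati-type inequality, and you close it by recognising $v_g=\tfrac1D(u_f-u_{G_u})$ and invoking the uniform moment bound of \Cref{lemmedesmoments} for $u_f$. But \Cref{lemmedesmoments} rests on the free-energy decay \eqref{fisher}--\eqref{bddbelow} and therefore requires $f_{\mathrm{ini}}=G_u(1+g_{\mathrm{ini}})\in L^1_+(\mathbb{R}^d)$, whereas \Cref{exuniciteL2} as stated only assumes $g_{\mathrm{ini}}\in X_u$, which permits $f_{\mathrm{ini}}$ to change sign; in that case the entropy is undefined and \Cref{lemmedesmoments} does not apply. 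Your global argument thus proves the proposition under the additional hypothesis $g_{\mathrm{ini}}\ge-1$. This is harmless for the paper: the only place \Cref{exuniciteL2} is invoked is in the proof of \Cref{lemmestar}, under the hypotheses of \Cref{thm3}, where $f_{\mathrm{ini}}\ge0$ and has finite free energy, so your argument covers exactly what is needed. If one wanted the proposition in the full generality stated, one would have to either produce a sign-independent a~priori bound on $v_g$ or downgrade the conclusion to local-in-time well-posedness.
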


An important observation is that the linear operator $\mathcal{L}_\mathbf{u}$ is symmetric with respect to the scalar product~$\langle \cdot , \cdot \rangle_\mathbf{u}$. More precisely we have the following lemma which follows from several integration by parts and is detailed in \cite[Lemma~5.1]{Li2021}.
\begin{lemma}
\label{symmetrie}
For all $\mathbf{u} \in \mathcal{S}$ and $g,h \in H^1(\mathbb{R}^d,G_\mathbf{u}\,dv)$, we have,
 
  \[\langle\mathcal{L}_\mathbf{u}g,h\rangle_\mathbf{u}= -D^2 \int_{\mathbb{R}^d}  (\nabla g - \mathbf{v}_g)\cdot (\nabla h-v_h)\,G_{\mathbf{u}}\,dv
\text{ and } 
    \langle\mathcal{R}_\mathbf{u}g,h\rangle_\mathbf{u}= D^2\, \mathbf{v}_g \cdot\int_{\mathbb{R}^d}g\,(\nabla h- v_h)\,  G_{\mathbf{u}}\,dv.\]

\end{lemma}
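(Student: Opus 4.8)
The plan is to prove both identities by direct integration by parts, being careful to exploit the structure of the Gibbs measure $G_u$. The crucial observation is that the drift in $\mathcal{L}_u$ and $\mathcal{R}_u$, namely $b(v):=\nabla\psi_\alpha(v)+v-u$, is exactly $-D\nabla\log G_u$; equivalently $\nabla G_u = -D^{-1} b\, G_u$. This is what makes the first-order operator $g\mapsto D\Delta g + b\cdot(v_g-\nabla g)$ symmetric with respect to the weighted measure. First I would record the weighted integration-by-parts formula: for suitable $\phi,\xi$,
\begin{align*}
    \int_{\mathbb{R}^d} (\Delta \phi)\, \xi\, G_u\, dv &= -\int_{\mathbb{R}^d} \nabla\phi\cdot\nabla\xi\, G_u\, dv + \frac{1}{D}\int_{\mathbb{R}^d} (b\cdot\nabla\phi)\,\xi\, G_u\, dv,
\end{align*}
which follows from $\nabla\cdot(G_u\nabla\phi) = (\Delta\phi)G_u - D^{-1}(b\cdot\nabla\phi)G_u$ and the decay of $G_u$ (guaranteed since $\psi_\alpha$ grows like $|v|^4$). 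Moment integrability of $g,h\in H^1(\mathbb{R}^d,G_u\,dv)$ against $G_u$ is automatic from the Gaussian-type tails, so all boundary terms vanish.

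For the first identity, I would expand $\langle \mathcal{L}_u g, h\rangle_u = D\int (\mathcal{L}_u g)\, h\, G_u\, dv - D^2 v_{\mathcal{L}_u g}\cdot v_h$. The term $D\int (\mathcal{L}_u g) h\, G_u\, dv$ splits into $D^2\int(\Delta g)h\,G_u\,dv$ and $D\int (b\cdot(v_g-\nabla g))\,h\,G_u\,dv$. Applying the IBP formula above to the first piece turns $D^2\int(\Delta g)h\,G_u$ into $-D^2\int\nabla g\cdot\nabla h\,G_u + D\int(b\cdot\nabla g)h\,G_u$; the $+D\int(b\cdot\nabla g)h\,G_u$ cancels the $-D\int(b\cdot\nabla g)h\,G_u$ coming from the drift term, leaving $-D^2\int\nabla g\cdot\nabla h\,G_u + D\, v_g\cdot\int b\, h\, G_u\, dv$. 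Now $\int b\,h\,G_u\,dv = -D\int h\,\nabla G_u\,dv = D\int G_u\nabla h\,dv = D^2 v_h$ after one more IBP (using $\int \nabla h\,G_u\,dv=-\int h\nabla G_u\,dv = D^{-1}\int h\, b\, G_u\,dv$, consistent with definition \eqref{vg}). So $D\int(\mathcal{L}_ug)h\,G_u\,dv = -D^2\int\nabla g\cdot\nabla h\,G_u\,dv + D^3 v_g\cdot v_h$. It remains to compute $v_{\mathcal{L}_u g} = \frac{1}{D}\int v\,(\mathcal{L}_u g)\,G_u\,dv$; by a similar componentwise IBP one finds $\int v_i \mathcal{L}_u g\, G_u\,dv$ reduces to $-D^2$ times the $i$-th component of $\int(\nabla g - v_g)G_u\,dv$, and $\int\nabla g\,G_u\,dv = D v_g$, $\int v_g G_u\,dv = v_g$ (mass $1$), giving $v_{\mathcal{L}_u g}=0$ — hence $-D^2 v_{\mathcal{L}_u g}\cdot v_h$ contributes nothing. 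Combining and rewriting $-D^2\int\nabla g\cdot\nabla h\,G_u + D^3 v_g\cdot v_h$ as $-D^2\int(\nabla g - v_g)\cdot(\nabla h - v_h)\,G_u\,dv$, using $\int\nabla h\,G_u = D v_h$ and $\int v_h G_u = v_h$ to absorb the cross terms and the constant, yields the claimed formula.

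For the second identity, I would expand $\langle\mathcal{R}_u g,h\rangle_u = D\int(\mathcal{R}_u g)h\,G_u\,dv - D^2 v_{\mathcal{R}_u g}\cdot v_h$. Since $\mathcal{R}_u g = -v_g\cdot(D\nabla g - b\,g)$ and $D\nabla g\,G_u - b\,g\,G_u = D\nabla(gG_u)$, we get $\mathcal{R}_u g\,G_u = -v_g\cdot D\nabla(g G_u)$, so $\int(\mathcal{R}_u g)h\,G_u\,dv = -D\,v_g\cdot\int h\,\nabla(gG_u)\,dv = D\,v_g\cdot\int (\nabla h)\,g\,G_u\,dv$ by IBP. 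Then the $v_{\mathcal{R}_u g}$ term: $D v_{\mathcal{R}_u g} = \int v\,(\mathcal{R}_u g)\,G_u\,dv = -D\,v_g\cdot\int v\otimes\nabla(gG_u)\,dv$, and componentwise IBP gives $\int v_i\,\partial_j(gG_u)\,dv = -\delta_{ij}\int g\,G_u\,dv = 0$ by the zero-mass condition $g\in X_u$ — wait, more precisely $\int v_i\partial_j(gG_u)\,dv = -\delta_{ij}\int gG_u\,dv$, which vanishes since $g\in X_u$; thus $v_{\mathcal{R}_u g}\cdot v_h$ reduces to $-v_g\cdot(\int gG_u\,dv)(\ldots)=0$. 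Hence $\langle\mathcal{R}_u g,h\rangle_u = D^2\,v_g\cdot\int(\nabla h)\,g\,G_u\,dv$, and since $\int g\,v_h\,G_u\,dv = v_h\int gG_u\,dv = 0$, this equals $D^2\,v_g\cdot\int g(\nabla h - v_h)\,G_u\,dv$, as claimed. The main obstacle is purely bookkeeping: keeping track of which integrals vanish by the mean-zero constraint $g\in X_u$ versus which cancellations come from the $\nabla\log G_u = -D^{-1}b$ identity, and justifying that all boundary terms at infinity vanish — the latter follows from the $|v|^4$-growth of $\psi_\alpha$ and the $H^1(G_u\,dv)$ regularity, so it is a routine density/approximation argument.
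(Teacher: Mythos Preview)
Your overall strategy---integration by parts using $\nabla G_u = -D^{-1}b\,G_u$---is exactly what the paper has in mind, and your argument for the second identity is correct. However, the computation for the first identity contains a genuine error. You assert that $\int_{\mathbb{R}^d}\nabla h\,G_u\,dv = D\,v_h$ (equivalently $\int_{\mathbb{R}^d} b\,h\,G_u\,dv = D^2 v_h$), calling this ``consistent with definition~\eqref{vg}''. But definition~\eqref{vg} gives $v_h = D^{-1}\int_{\mathbb{R}^d} v\,h\,G_u\,dv$, whereas integration by parts gives $\int_{\mathbb{R}^d}\nabla h\,G_u\,dv = D^{-1}\int_{\mathbb{R}^d} b\,h\,G_u\,dv$ with $b = v-u+\nabla\psi_\alpha$. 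Since $b\neq v$, these differ by $D^{-1}\int_{\mathbb{R}^d} h\,(\nabla\psi_\alpha - u)\,G_u\,dv$, which does not vanish even for $h\in X_u$; moreover the power of $D$ in your formula is already off. The same mistake propagates to your claim $v_{\mathcal{L}_u g}=0$: the correct value is $v_{\mathcal{L}_u g} = v_g - \int_{\mathbb{R}^d}\nabla g\,G_u\,dv$, which is generically nonzero. Even granting your own (incorrect) identities, the final ``rewriting'' step does not close---expanding both sides one finds they match only when $D=1$.

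The repair is minimal: do not attempt to simplify $\int_{\mathbb{R}^d}\nabla h\,G_u\,dv$ any further. Your (correct) intermediate step
\[
D\int_{\mathbb{R}^d}(\mathcal{L}_u g)\,h\,G_u\,dv = -D^2\int_{\mathbb{R}^d}\nabla g\cdot\nabla h\,G_u\,dv + D^2\, v_g\cdot\!\int_{\mathbb{R}^d}\nabla h\,G_u\,dv,
\]
together with the correct $v_{\mathcal{L}_u g} = v_g - \int_{\mathbb{R}^d}\nabla g\,G_u\,dv$, gives
\[
\langle\mathcal{L}_u g,h\rangle_u = -D^2\!\int_{\mathbb{R}^d}\!\nabla g\cdot\nabla h\,G_u\,dv + D^2\, v_g\cdot\!\int_{\mathbb{R}^d}\!\nabla h\,G_u\,dv + D^2\, v_h\cdot\!\int_{\mathbb{R}^d}\!\nabla g\,G_u\,dv - D^2\, v_g\cdot v_h,
\]
which is precisely the expansion of $-D^2\int_{\mathbb{R}^d}(\nabla g - v_g)\cdot(\nabla h - v_h)\,G_u\,dv$.
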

\end{subsection}
\begin{subsection}{A coercivity result} 
\Cref{lemmeXingyu} is proved in \cite[Proposition~3.1]{Li2021} 
 and is a Poincaré type inequality.
\begin{lemma}
\label{lemmeXingyu}
    Consider $\mathbf{u} \in \mathcal{S}$. Assume that $g\in H^1(\mathbb{R}^d,G_\mathbf{u}\,dv)$ and $\int_{\mathbb{R}^d}g\, G_\mathbf{u} \, dv =0$ then
    \begin{align*}
        Q_{2,\mathbf{u}}[g]\geq \beta(D)^2\frac{(\mathbf{v}_g\cdot {\mathbf{u}})^2}{|\mathbf{v}_g|^2|\mathbf{u}|^2} Q_{1,\mathbf{u}}[g]
    \end{align*}
where $\beta(D)^2:= D\Lambda(D)\big(1-\kappa(D)\big)$ and 
\begin{align*}
    \kappa(D):= \frac{1}{r(D)^2} \int_{\mathbb{R}^d}|(v-\mathbf{u})\cdot \mathbf{u}|^2G_\mathbf{u}dv \in (0,1)
\end{align*}
for an arbitrary $\mathbf{u} \in \mathcal{S}$.
\end{lemma}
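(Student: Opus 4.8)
The representation $Q_{2,u}[g]=-\langle\mathcal{L}_u g,g\rangle_u$ and $Q_{1,u}[g]=\langle g,g\rangle_u$ (which follow from \Cref{symmetrie} and \eqref{definitionQ1}) show that the lemma is a \emph{partial} spectral–gap estimate for the self-adjoint operator $\mathcal{L}_u$, weighted by the squared cosine of the angle between the current $v_g$ and $u$. I would prove it as a Poincaré–type bound resting on \eqref{Poincaréinequality}. Since $u\in\mathcal{S}$ satisfies $u=u_{G_u}$, one has $\int_{\mathbb{R}^d}(v-u)\,G_u\,dv=0$, so the function $h:=g-v_g\cdot(v-u)$ lies in $X_u$ and $\nabla h=\nabla g-v_g$; hence $Q_{2,u}[g]=D^2\int_{\mathbb{R}^d}|\nabla h|^2\,G_u\,dv$, and \eqref{Poincaréinequality} gives $Q_{2,u}[g]\ge D^2\Lambda(D)\int_{\mathbb{R}^d}h^2\,G_u\,dv$. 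Using $\int_{\mathbb{R}^d}g\,(v-u)\,G_u\,dv=D\,v_g$ (from \eqref{vg} together with $\int_{\mathbb{R}^d}g\,G_u\,dv=0$), I would expand
\[
\int_{\mathbb{R}^d}h^2\,G_u\,dv=\frac1D\,Q_{1,u}[g]-D\,|v_g|^2+v_g\cdot(\Sigma_u v_g),
\]
where $\Sigma_u=\int_{\mathbb{R}^d}(v-u)\otimes(v-u)\,G_u\,dv$ is the covariance of $G_u$.

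The crux is the explicit form of $\Sigma_u$ on $\mathcal{S}$. By reflection symmetry of $G_u$ across hyperplanes through $u$ and rotational symmetry about the $u$–axis, $\Sigma_u$ preserves $\mathbb{R}u$ and $u^\perp$ and is a scalar on each; on $\mathbb{R}u$ that scalar is $\kappa(D)$ by the very definition of $\kappa(D)$, and on $u^\perp$ it equals $D$. I would prove the last claim by noting that for $b\perp u$ the perturbation $g_b:=b\cdot(v-u)$ is a zero mode of $\mathcal{L}_u$ (it is the linearization of the family of stationary states $G_{u'}$, $u'\in\mathcal{S}$, in a direction tangent to $\mathcal{S}$): inserting $g_b$ into \eqref{Lu} and using $v_{g_b}=\tfrac1D\Sigma_u b$ gives $\mathcal{L}_u g_b=(\nabla\psi_\alpha+v-u)\cdot\big(\tfrac1D\Sigma_u b-b\big)$, which vanishes identically only if $\Sigma_u b=Db$. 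Consequently $v_g\cdot(\Sigma_u v_g)=D\,|v_g|^2-(D-\kappa(D))(v_g\cdot\hat u)^2$ with $\hat u=u/|u|$, whence
\[
\int_{\mathbb{R}^d}h^2\,G_u\,dv=\frac1D\,Q_{1,u}[g]-(D-\kappa(D))(v_g\cdot\hat u)^2,\qquad
Q_{2,u}[g]\ge D\Lambda(D)\,Q_{1,u}[g]-D^2\Lambda(D)(D-\kappa(D))(v_g\cdot\hat u)^2 .
\]

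To conclude I would combine this with $Q_{2,u}[g]\ge0$ and with the non-negativity $\int_{\mathbb{R}^d}h^2\,G_u\,dv\ge0$ (which forces $(v_g\cdot\hat u)^2\le Q_{1,u}[g]/(D(D-\kappa(D)))$), together with $(v_g\cdot u)^2=|u|^2(v_g\cdot\hat u)^2$ and $(v_g\cdot u)^2\le|v_g|^2|u|^2$, by an elementary two–regime interpolation: the Poincaré–type bound is effective when $v_g$ is close to radial, while the bound $Q_{2,u}[g]\ge0$ takes over when $v_g$ is nearly tangential; the stated inequality with $\beta(D)=D\Lambda(D)(1-\kappa(D))$ is what balancing the two regimes yields. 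I expect the two genuine difficulties to be: (i) justifying rigorously that the maps $g_b$, $b\perp u$, are exactly the common degeneracy directions of $Q_{1,u}$ and $Q_{2,u}$ — this is precisely what makes the geometric weight $(v_g\cdot u)^2/(|v_g|^2|u|^2)$ indispensable, since adding a large multiple of a $g_b$ changes neither $Q_{1,u}$ nor $Q_{2,u}$ but can make $|v_g|$ arbitrarily large; and (ii) the bookkeeping in the last step that produces precisely the constant $\beta(D)^2$ rather than merely some positive multiple of $Q_{1,u}[g]$.
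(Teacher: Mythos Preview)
The paper does not actually give a proof of this lemma: it simply cites \cite[Proposition~3.1]{Li2021}. So there is no ``paper's proof'' to compare against, and your outline should be judged on its own. Your strategy is the natural one and almost certainly the one used in \cite{Li2021}: set $h:=g-v_g\cdot(v-u)$ so that $Q_{2,u}[g]=D^2\lVert\nabla h\rVert_{2,u}^2$, apply \eqref{Poincaréinequality}, and compute $\Sigma_u$. Your zero--mode argument for the transverse eigenvalue $D$ of $\Sigma_u$ is correct and elegant.

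The genuine gap is your last paragraph. The bound you derive,
\[
Q_{2,u}[g]\ \ge\ D\Lambda(D)\,Q_{1,u}[g]\ -\ D^2\Lambda(D)\,c\,(v_g\cdot\hat u)^2,
\]
together with $Q_{2,u}[g]\ge0$ and $\int h^2G_u\,dv\ge0$, does \emph{not} yield the result by ``balancing two regimes'': one can check that eliminating $(v_g\cdot\hat u)^2$ this way loses exactly the angular factor $(v_g\cdot u)^2/(|v_g|^2|u|^2)$ you need. The missing ingredient is the Cauchy--Schwarz estimate proved in the very next lemma of the paper (\Cref{lemmenormequi}), whose proof is independent of \Cref{lemmeXingyu}: writing $\theta:=(v_g\cdot\hat u)^2/|v_g|^2$, it gives $(1-\kappa)\theta\,\lVert g\rVert_{2,u}^2\le \tfrac1D Q_{1,u}[g]$. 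Insert the identity $D^2|v_g|^2=D\lVert g\rVert_{2,u}^2-Q_{1,u}[g]$ into your exact formula for $\int h^2G_u\,dv$ to obtain
\[
\int_{\mathbb{R}^d}h^2G_u\,dv=\tfrac1D\,Q_{1,u}[g]\big(1+(1-\kappa)\theta\big)-(1-\kappa)\theta\,\lVert g\rVert_{2,u}^2\ \ge\ \tfrac{(1-\kappa)\theta}{D}\,Q_{1,u}[g],
\]
and therefore $Q_{2,u}[g]\ge D\Lambda(D)(1-\kappa(D))\,\theta\,Q_{1,u}[g]$. Since Poincar\'e tested on $b\cdot(v-u)$ with $b\perp u$ forces $D\Lambda(D)\le1$, this is at least $\beta(D)^2\theta\,Q_{1,u}[g]$, which is the claim. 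So your difficulty (ii) is not mere bookkeeping: it requires \Cref{lemmenormequi} as an input. (A minor point: the parallel eigenvalue of $\Sigma_u$ that you call $\kappa(D)$ appears in the paper's own use of \cite[Corollary~2.2]{Li2021} as $D\kappa(D)$; this is only a normalisation discrepancy and does not affect the argument above.)
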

 \Cref{lemmenormequi} was proved in \cite[Lemma~3.1]{Li2021} for $D>D_*$ and $\mathbf{u}=0$. Here we adapt the proof for $D<D_*$.
\begin{lemma}
\label{lemmenormequi}
    Let $\mathbf{u}\in \mathcal{S}$ and $g \in L^2(\mathbb{R}^d,G_{\mathbf{u}}\,dv)$ such that $\int_{\mathbb{R}^d} g\,G_{\mathbf{u}}\;dv=0$. Then 
    \begin{align*}
         Q_{1,\mathbf{u}}[g]
    & \geq \eta(D)^2 \frac{(\mathbf{v}_g\cdot {\mathbf{u}})^2}{|\mathbf{v}_g|^2|\mathbf{u}|^2} \int_{\mathbb{R}^d}g^2G_{\mathbf{u}}dv,
    \end{align*}
where $\eta(D)^2=D\big(1-\kappa(D)\big)$ and $\kappa(D)$ as in \Cref{lemmeXingyu}.
\end{lemma}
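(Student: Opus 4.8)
The plan is to imitate the argument used in \cite{Li2021} for $u = 0$, isolating the "bad" direction $v_g$ relative to $u$. Recall that $Q_{1,u}[g] = D\int_{\mathbb{R}^d} g^2 G_u\,dv - D^2|v_g|^2$, so the statement to prove is equivalent to
\begin{align*}
    D^2|v_g|^2 \le \left(1 - \eta(D)^2\,\frac{(v_g\cdot u)^2}{|v_g|^2|u|^2}\right) D \int_{\mathbb{R}^d} g^2 G_u\,dv,
\end{align*}
i.e.\ an upper bound on $|v_g|^2$ in terms of the weighted $L^2$ norm of $g$ with a coefficient that is strictly less than $D^{-1}$ whenever $v_g$ has a component along $u$. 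The natural route is to write $v_g = \frac{1}{D}\int vg\,G_u\,dv$; since $\int g\,G_u\,dv = 0$ we may replace $v$ by $v - u$ inside the integral, and then decompose $v - u = \Pi(v-u) + \Pi^\perp(v-u)$ where $\Pi$ is the orthogonal projection onto $\mathrm{span}(u)$. First I would project the identity $D v_g = \int (v-u)g\,G_u\,dv$ onto $u$ to get $D\,(v_g\cdot u) = \int \big((v-u)\cdot u\big)\,g\,G_u\,dv$, and then apply Cauchy–Schwarz with the weight $G_u\,dv$ to obtain
\begin{align*}
    D^2 (v_g\cdot u)^2 \le \left(\int_{\mathbb{R}^d} \big|(v-u)\cdot u\big|^2 G_u\,dv\right)\left(\int_{\mathbb{R}^d} g^2 G_u\,dv\right) = r(D)^2\,\kappa(D)\int_{\mathbb{R}^d} g^2 G_u\,dv,
\end{align*}
using the definition of $\kappa(D)$ from \Cref{lemmeXingyu} and $|u|^2 = r(D)^2$.

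The remaining step is to control $|v_g|^2$ itself, not just its component along $u$. The cleanest way is to estimate $D^2|v_g|^2 = D^2(v_g\cdot u)^2/|u|^2 + D^2|\Pi^\perp v_g|^2$ and bound each piece. For the tangential part $\Pi^\perp v_g$ one cannot do better than a crude bound: $D^2|\Pi^\perp v_g|^2 \le D^2|v_g|^2$ trivially, so instead I would go the other way and directly estimate $|v_g|^2$ by a convexity/interpolation trick. Concretely, set $\lambda := (v_g\cdot u)^2/(|v_g|^2|u|^2) \in [0,1]$, the squared cosine appearing in the statement. Write $D v_g \cdot w = \int ((v-u)\cdot w) g\,G_u\,dv$ for the unit vector $w := v_g/|v_g|$, apply Cauchy–Schwarz to get $D^2|v_g|^2 \le \big(\int |(v-u)\cdot w|^2 G_u\,dv\big)\int g^2 G_u\,dv$, and then compute the moment $\int |(v-u)\cdot w|^2 G_u\,dv$ by decomposing $w = \sqrt\lambda\,\hat u + \sqrt{1-\lambda}\,\hat u^\perp$; since $G_u$ has covariance that is block-diagonal with respect to the $\hat u$-direction (by the rotational symmetry of $\psi_\alpha$ and the structure of the Gaussian-like factor), this equals $\lambda\,r(D)^2\kappa(D) + (1-\lambda)\,\sigma_\perp^2$ where $\sigma_\perp^2 = \int|(v-u)\cdot\hat u^\perp|^2 G_u\,dv$. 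Here the key identity I would need is that $\sigma_\perp^2 = D$, i.e.\ the transverse variance of the Gibbs state equals the noise strength — this should follow from an integration by parts against $G_u$ in a direction orthogonal to $u$, exactly as in the computation of $\kappa(D) \in (0,1)$ in \cite{Li2021}, because in that direction there is no linear drift term breaking the symmetry.

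Putting this together gives $D^2|v_g|^2 \le \big(\lambda\, r(D)^2\kappa(D) + (1-\lambda)D\big)\int g^2 G_u\,dv$, hence
\begin{align*}
    Q_{1,u}[g] \ge \Big(D - \lambda\, r(D)^2\kappa(D) - (1-\lambda)D\Big)\int g^2 G_u\,dv = \lambda\big(D - r(D)^2\kappa(D)\big)\int g^2 G_u\,dv.
\end{align*}
Finally I would check the arithmetic that $D - r(D)^2\kappa(D) \ge D(1-\kappa(D)) = \eta(D)^2$: this holds precisely when $r(D)^2 \le D$, which is a known bound on the polarized speed (it follows from the characterization of $r(D)$ as a zero of $H$, or more directly from $\kappa(D) = r(D)^{-2}\int|(v-u)\cdot u|^2 G_u\,dv < 1$ combined with the transverse identity $\sigma_\perp^2 = D$, since the total second moment in the $u$-direction is $r(D)^2\kappa(D) = \int|(v-u)\cdot\hat u|^2 G_u\,dv$ and one compares longitudinal versus transverse variance). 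Actually the slicker finish is to note $\lambda(D - r(D)^2\kappa(D)) + (1-\lambda)\cdot 0 \ge \lambda \eta(D)^2$ once $D - r(D)^2\kappa(D) \ge \eta(D)^2$, giving the claimed inequality with constant $\eta(D)^2 = D(1-\kappa(D))$.

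The main obstacle I anticipate is the transverse moment computation $\int_{\mathbb{R}^d}|(v-u)\cdot\hat u^\perp|^2 G_u\,dv = D$ and, relatedly, establishing $r(D)^2\kappa(D) \le D$ (equivalently the longitudinal variance is at most the transverse variance $D$). Both are integration-by-parts identities for the explicit Gibbs measure $G_u$, and in dimension $d \ge 2$ one must be careful that the quartic potential $\psi_\alpha$ is radial so that only the $\frac12|v-u|^2$ term is anisotropic; the transverse direction then behaves like a pure Gaussian of variance $D$ after the standard Gaussian integration-by-parts $\int \partial_j(h)\,G_u = D^{-1}\int h\,(v-u)_j\,G_u$ applied with $h = (v-u)\cdot\hat u^\perp$. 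Everything else — the two applications of Cauchy–Schwarz and the convex combination bookkeeping in $\lambda$ — is routine and parallels \cite[Lemma~3.1]{Li2021}.
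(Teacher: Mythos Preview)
Your overall strategy is the paper's: dot $Dv_g=\int(v-u)\,g\,G_u\,dv$ with the unit vector $w=v_g/|v_g|$, apply Cauchy--Schwarz, and evaluate the second moment $\int|(v-u)\cdot w|^2 G_u\,dv$. The paper does this last step in one line by invoking \cite[Corollary~2.2]{Li2021}, which gives
\[
\frac{1}{D}\int_{\mathbb{R}^d}|(v-u)\cdot w|^2\,G_u\,dv=(\kappa(D)-1)(w\cdot e)^2+|w|^2,\qquad e=u/r(D),
\]
and the inequality $Q_{1,u}[g]\ge \lambda\,D(1-\kappa(D))\int g^2 G_u\,dv$ then follows immediately, with no side conditions.

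Your attempt to rederive that moment by longitudinal/transverse decomposition has two genuine problems. First, a bookkeeping slip: $r(D)^2\kappa(D)=\int|(v-u)\cdot u|^2 G_u\,dv$ is the moment along $u$, not along the \emph{unit} vector $\hat u$; after dividing by $|u|^2=r(D)^2$ the longitudinal variance loses the $r(D)^2$ factor. With the correct constant the detour through ``$r(D)^2\le D$'' disappears --- which is fortunate, because that inequality is false for small $D$ (as $D\downarrow 0$ the compatibility condition forces $r(D)\to 1$). Second, your justification that $\sigma_\perp^2=D$ ``because the transverse direction behaves like a pure Gaussian'' is incorrect: $\psi_\alpha$ is radial in $v$, not in $v-u$, so $\partial_{\hat u^\perp}\phi_u$ carries the extra term $\alpha(|v|^2-1)\,v\cdot\hat u^\perp$, and the integration by parts you propose only yields
\[
D=\sigma_\perp^2+\alpha\int_{\mathbb{R}^d}(|v|^2-1)\,|v\cdot\hat u^\perp|^2\,G_u\,dv,
\]
which is not $\sigma_\perp^2=D$ without further work. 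The identity you need for the covariance of $G_u$ is precisely the nontrivial content of \cite[Corollary~2.2]{Li2021}; once you cite it, your decomposition collapses to the paper's one-line computation and the spurious step $r(D)^2\le D$ is no longer required.
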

\begin{proof}
   If $w=  \int_{\mathbb{R^d}}g\,v \,G_{\mathbf{u}}\,dv=D\,\mathbf{v}_g$ for some $\mathbf{u}\in \mathcal{S}$, then 
\begin{align*}
   |w|^2
    &= \int_{\mathbb{R}^d}{g(v-\mathbf{u})\cdot w\, G_{\mathbf{u}}\,dv}
    \leq \lVert g \rVert_{2,\mathbf{u}} \left( \int_{\mathbb{R}^d} |(v-\mathbf{u}).w|^2 G_{\mathbf{u}}\,dv\right)^\frac{1}{2}. \\
\end{align*}
Thanks to \cite[Corollary~2.2]{Li2021}, we have for all $w\in \mathbb{R}^d$,
\begin{align*}
    \frac{1}{D} \int_{\mathbb{R}^d}|v-\mathbf{u}\cdot w|^2G_\mathbf{u}dv= (\kappa(D)-1)(w\cdot e)^2+|w|^2 \text{ where } e=\mathbf{u}/r(D).
\end{align*}
So we obtain
\begin{align*}
   |w|^2 &\leq \lVert g \rVert_{2,\mathbf{u}}\left[ D\left( (\kappa(D)-1)|w.e|^2+ |w|^2\right)\right]^{\frac{1}{2}}\\
    & \leq  \lVert g \rVert_{2,\mathbf{u}}\left[ D\left(1- (1-\kappa(D))\frac{|w\cdot e|^2}{|w|^2}\right)\right]^{\frac{1}{2}} |w|,
\end{align*}

and
\begin{align*}
    \left|\int_{\mathbb{R^d}}{gvG_{\mathbf{u}}dv}\right|^2 \leq D\left(1- (1-\kappa(D))\frac{|w\cdot e|^2}{|w|^2}\right)\lVert g \rVert_{2,\mathbf{u}}^2,
\end{align*}

\begin{align*}
    Q_{1,\mathbf{u}}[g]&= D \int_{\mathbb{R}^d}g^2G_{\mathbf{u}}dv- \left| \int_{\mathbb{R}^d}gvG_{\mathbf{u}}dv \right|^2 \geq D(1- \kappa(D))\frac{|w\cdot e|^2}{|w|^2} \lVert g \rVert_{2,\mathbf{u}}^2.
\end{align*}
\end{proof}
Let us define 
\begin{align}
\label{Wk}
    W_k:= \int_{\mathbb{R}^d}|v|^kG_\mathbf{u}dv
\end{align}
 for $\mathbf{u} \in \mathcal{S}$ and observe that $W_k$ is independent of the chosen $\mathbf{u} \in \mathcal{S}$.

\begin{lemma}
\label{normequiH1}
    There exist positive constants $a(D)$ and $b(D)$ such that for all ~$g\in H^1(\mathbb{R}^d, G_{\mathbf{u}} \,dv)$ ~{satisfying $\int_{\mathbb{R}^d }g G_{\mathbf{u}} \, dv =0$,}
    \begin{align*}
        a(D)Q_{2,\mathbf{u}}[g]^{\frac{1}{2}}\leq \lVert\nabla g \rVert_{2,\mathbf{u}} \leq   b(D) \frac{|\mathbf{v}_g|^2|\mathbf{u}|^2}{(\mathbf{v}_g \cdot \mathbf{u})^2}Q_{2,\mathbf{u}}[g]^{\frac{1}{2}}.
    \end{align*}
\end{lemma}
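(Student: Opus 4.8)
The plan is to prove both inequalities by comparing $\lVert\nabla g\rVert_{2,u}$ with $\lVert\nabla g - v_g\rVert_{2,u}$, using the identity $Q_{2,u}[g]=D^2\int_{\mathbb{R}^d}|\nabla g - v_g|^2G_u\,dv$, so that $Q_{2,u}[g]^{1/2}=D\,\lVert\nabla g - v_g\rVert_{2,u}$. Since the geometric factor satisfies $\tfrac{|v_g|^2|u|^2}{(v_g\cdot u)^2}\ge 1$ by Cauchy--Schwarz, the left-hand inequality is the elementary direction while the right-hand one requires the coercivity of $Q_{1,u}$ and $Q_{2,u}$ transverse to the tangential direction, provided by \Cref{lemmeXingyu} and \Cref{lemmenormequi}. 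The recurring ingredient is the elementary bound $|v_g|\le C_0 D^{-1}\lVert g\rVert_{2,u}$, obtained by Cauchy--Schwarz from $v_g=\tfrac1D\int_{\mathbb{R}^d}(v-u)g\,G_u\,dv$ (using $\int g\,G_u\,dv=0$ in \eqref{vg} to recenter), where $C_0^2=\int_{\mathbb{R}^d}|v-u|^2G_u\,dv=W_2-r(D)^2$ is finite and independent of $u\in\mathcal{S}$ by the compatibility relation \eqref{compatibilityequation} together with \eqref{Wk}.

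For the lower bound I would use the triangle inequality $\lVert\nabla g - v_g\rVert_{2,u}\le\lVert\nabla g\rVert_{2,u}+|v_g|$, bound $|v_g|\le C_0 D^{-1}\lVert g\rVert_{2,u}$, and invoke the Poincaré inequality \eqref{Poincaréinequality} (with constant $\Lambda(D)$, applicable since $\int g\,G_u\,dv=0$) to replace $\lVert g\rVert_{2,u}$ by $\Lambda(D)^{-1/2}\lVert\nabla g\rVert_{2,u}$. This gives $Q_{2,u}[g]^{1/2}\le\bigl(D+C_0\Lambda(D)^{-1/2}\bigr)\lVert\nabla g\rVert_{2,u}$, i.e. the claim with $a(D)=\bigl(D+C_0\Lambda(D)^{-1/2}\bigr)^{-1}$.

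For the upper bound I would start again from $\lVert\nabla g\rVert_{2,u}\le\lVert\nabla g - v_g\rVert_{2,u}+|v_g|=D^{-1}Q_{2,u}[g]^{1/2}+|v_g|$ and control $|v_g|$ by chaining the two coercivity lemmas: \Cref{lemmeXingyu} gives $Q_{2,u}[g]\ge\beta(D)^2\tfrac{(v_g\cdot u)^2}{|v_g|^2|u|^2}Q_{1,u}[g]$ and \Cref{lemmenormequi} gives $Q_{1,u}[g]\ge\eta(D)^2\tfrac{(v_g\cdot u)^2}{|v_g|^2|u|^2}\lVert g\rVert_{2,u}^2$, whence $\lVert g\rVert_{2,u}\le\bigl(\beta(D)\eta(D)\bigr)^{-1}\tfrac{|v_g|^2|u|^2}{(v_g\cdot u)^2}Q_{2,u}[g]^{1/2}$. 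Combining this with $|v_g|\le C_0D^{-1}\lVert g\rVert_{2,u}$ and with $\tfrac{|v_g|^2|u|^2}{(v_g\cdot u)^2}\ge 1$ (to also absorb the $D^{-1}Q_{2,u}^{1/2}$ term) yields the claim with $b(D)=D^{-1}\bigl(1+C_0/(\beta(D)\eta(D))\bigr)$. All the constants are uniform in $u\in\mathcal{S}$ by rotational invariance of $G_u$ along $\mathcal{S}$.

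The only point requiring care, and the mild obstacle here, is the degenerate geometry of $v_g$: if $v_g\cdot u=0$ with $v_g\ne 0$ the right-hand side of the upper bound is $+\infty$ and there is nothing to prove, while if $v_g=0$ one simply has $\lVert\nabla g\rVert_{2,u}=D^{-1}Q_{2,u}[g]^{1/2}$ directly; so the statement should be read with the convention $c/0=+\infty$ and the case $v_g=0$ handled separately. Apart from this, once \Cref{lemmeXingyu} and \Cref{lemmenormequi} are available the argument is routine, the only bookkeeping being to track the constants through the chaining so that $a(D)$ and $b(D)$ depend only on $D$, through $\Lambda(D),\beta(D),\eta(D),r(D)$ and the moments $W_k$.
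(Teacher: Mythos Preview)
Your proof is correct and follows essentially the same route as the paper. Both arguments start from $Q_{2,u}[g]=D^2\lVert\nabla g-v_g\rVert_{2,u}^2$, control $|v_g|$ by $\lVert g\rVert_{2,u}$ via Cauchy--Schwarz, invoke Poincar\'e \eqref{Poincaréinequality} for the lower bound, and chain \Cref{lemmeXingyu} with \Cref{lemmenormequi} (together with $\tfrac{|v_g|^2|u|^2}{(v_g\cdot u)^2}\ge 1$) for the upper bound; the only cosmetic difference is that the paper expands $\lVert\nabla g-v_g\rVert_{2,u}^2$ and reads off $\lVert\nabla g\rVert_{2,u}$ from a quadratic inequality, while you use the triangle inequality directly, which is slightly cleaner and yields marginally different constants (your $C_0=(W_2-r(D)^2)^{1/2}$ versus the paper's $W_2^{1/2}$).
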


\begin{proof}
  \begin{equation}
\label{ptiteq}
\begin{aligned}
Q_{2,\mathbf{u}}[g] &= D^2 \lVert \nabla g -\mathbf{v}_g\rVert_{2,\mathbf{u}}^2\\
&= D^2 \lVert \nabla g \rVert_{2,\mathbf{u}}^2 - 2D^2\, \mathbf{v}_g \cdot \int_{\mathbb{R}^d} \nabla g\, G_\mathbf{u}\,dv + D^2 |\mathbf{v}_g|^2 \\
&\geq D^2 \lVert \nabla g \rVert_{2,\mathbf{u}}^2- 2DW_2^{1/2} \lVert g \rVert_{2,\mathbf{u}} \lVert \nabla g \rVert_{2,\mathbf{u}}
\end{aligned}
\end{equation}

Hence, 
\begin{align*}
    P\left(\lVert \nabla g \rVert_{2,\mathbf{u}}\right) \leq 0,
\end{align*}
where $P $ is the following second degree polynomial:
$$P(X):= D^2X^2-2DW_2^{\frac{1}{2}}\lVert g \rVert_{2,\mathbf{u}}X-Q_{2,\mathbf{u}}[g].$$

Using that for $a>0,b>0,c>0, $
$$ a x^{2}-b x-c \leq 0 \Rightarrow x \leq \frac{b+\sqrt{b^{2}+4 a c}}{2 a} \leq \frac{b}{a}+\sqrt{\frac{c}{a}}
$$
with \Cref{lemmeXingyu} and \Cref{lemmenormequi}, we obtain
\begin{align*}
   \lVert \nabla g \rVert_{2,\mathbf{u}}\leq \frac{1}{D}  \left(2W_2^{\frac{1}{2}}\lVert g \rVert_{2,\mathbf{u}}+Q_{2,\mathbf{u}}[g]^{\frac{1}{2}} \right) \leq b(D) \frac{|\mathbf{v}_g|^2|\mathbf{u}|^2}{(\mathbf{v}_g \cdot \mathbf{u})^2}Q_{2,\mathbf{u}}[g]^{\frac{1}{2}},
\end{align*}
with \begin{align*}
    b(D)=\frac{1}{D}  \left(\frac{2 W_2^{\frac{1}{2}}}{\eta(D) \beta(D)}+1\right).
\end{align*}
Using Cauchy-Schwarz inequality and \eqref{Poincaréinequality} with \eqref{ptiteq}, we obtain 
\begin{align}
    a(D)^2 Q_{2,\mathbf{u}}[g]\leq  \lVert \nabla g \rVert_{2,\mathbf{u}}^2
\end{align} 
with 
\begin{equation*}
    a(D) = \left(D^2 + \frac{2D W_2^{1/2}}{\Lambda(D)^{1/2}} + \frac{W_2}{\Lambda(D)} \right)^{-1/2} \qedhere
\end{equation*}
\end{proof}
For $\epsilon>0$, let 
$$\mathcal{A}_{\epsilon}:= \left\{ g \in L^2(\mathbb{R}^d,G_\mathbf{u}dv) \text{ : } \ \int_{\mathbb{R}^d}g\,G_\mathbf{u}\,dv=0 \ and \  |\mathbf{v}_g \cdot \mathbf{u}| \geq \epsilon |\mathbf{v}_g| \right\}.$$ 
\Cref{lemmenormequi} states that the norms $Q_{1,\mathbf{u}}$ and $\lVert . \rVert_{2,\mathbf{u}}$  are equivalent on $\mathcal{A}_{\epsilon}$. \Cref{normequiH1} states that the norms associated to $Q_{2,\mathbf{u}}$ and the canonical norm on $H^1(\mathbb{R}^d,G_\mathbf{u}\,dv)$ are equivalent on $\mathcal{A}_{\epsilon}$.
Let $\mathbf{u}\in \mathcal{S}$ and $g $ be a solution of \eqref{evoldeg}.
Using  \Cref{symmetrie}, we obtain
\begin{align}
\label{derivéPS}
    \frac{1}{2}\frac{d}{dt}\langle g,g\rangle_{\mathbf{u}}&=\langle  \partial_t g,g\rangle_{\mathbf{u}} 
    = \langle \mathcal{L}g+ \mathcal{R}g,g\rangle_{\mathbf{u}} 
    = - Q_{2,\mathbf{u}_{}}[g] + D^2 \mathbf{v}_g  \int_{\mathbb{R}^d}g(\nabla g- \mathbf{v}_g)G_{\mathbf{u}}dv.
\end{align}
If we forget the nonlinear part of \eqref{evoldeg} and consider that $g$ is a solution of 
\begin{align*}
    \partial_tg= \mathcal{L}g,
\end{align*}
then
\begin{align*}
     \frac{1}{2}\frac{d}{dt} Q_{1,\mathbf{u}}[g]=- \,Q_{2,\mathbf{u}_{}}[g].
\end{align*}
Since we proved $\lim_{t\rightarrow \infty}\mathbf{u}_{f}(t)=\mathbf{u}_{\infty}$ in \Cref{convdeJf}, a natural choice of $\mathbf{u}$ is $\mathbf{u}_{\infty}$.
If $g$ belongs to some $\mathcal{A}_{\epsilon}$ with $\mathbf{u}=\mathbf{u}_{\infty}$ for all time $t\geq 0$, we are able to establish an exponential decay rate of  $Q_{1,\mathbf{u}_\infty}[g]$ thanks to \Cref{lemmenormequi} and Grönwall's lemma. Since $\mathbf{v}_g= \frac{1}{D}(\mathbf{u}_{f}-\mathbf{u}_{\infty})$, this condition is equivalent to ~$|(\mathbf{u}_{f}-\mathbf{u}_{\infty}) \cdot \mathbf{u}_{\infty} | \geq \epsilon  |\mathbf{u}_{f}-\mathbf{u}_{\infty}| $.  It is not satisfied  if $\mathbf{u}_{f}$ converges to $\mathbf{u}_{\infty}$ tangentially to $\mathcal{S}$.

\end{subsection}
\begin{subsection}{Perturbation around the projected state on \texorpdfstring{$\mathcal{S}$}{S}.}

To improve coercivity, we consider a perturbation relative to the projected velocity,
\begin{align}
\label{gstar}
    f=G_{*}(1+g),
\end{align}
 where $\mathbf{u}_*$ defined by \eqref{ustar}
  denotes the projection of $\mathbf{u}_{f}$ on $\mathcal{S}$ and $G_*(t):= G_{\mathbf{u}_*(t)}$. From now on, we take $\mathbf{u}=\mathbf{u}_*$. According to \eqref{vg}, $\mathbf{v}_g= \frac{1}{D}\int_{\mathbb{R}^d}g\,v\,G_*\,dv.$

   \begin{lemma}
 \label{lemmestar}
    Under the assumptions of \Cref{thm3}, if $f$ solves \eqref{PDE} and $g$ and $\mathbf{u}_*$ are defined by \eqref{gstar} and \eqref{ustar}, then, $g(t)$  is a function in $H^1(\mathbb{R}^d,G_{*}(t)dv)$ for all $t>0$ and
     \begin{align}
     \label{evoldegstar}
         \partial_tg=\mathcal{L}_{*}g+\mathcal{R}_{*}g-\frac{v\cdot \mathbf{u}_*'}{D}(1+g)
     \end{align}
      where $\mathcal{L}_*:=\mathcal{L}_{\mathbf{u}_*}$ and $\mathcal{R}_*:=\mathcal{R}_{\mathbf{u}_*}$ are defined by \eqref{Lu} and \eqref{Ru}. Moreover there holds  $\mathbf{u}_*'\cdot \mathbf{v}_g=0$ where $\mathbf{u}_*':= \frac{\mathrm{d} \mathbf{u}_*}{\mathrm{d}t}$
 \end{lemma}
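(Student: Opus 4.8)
The plan is to prove \Cref{lemmestar} by a direct computation starting from the definition \eqref{gstar} of $g$, together with the already-established fact (\Cref{convdeJf}) that $u_f$, hence $u_*$, is differentiable in $t$ with a well-controlled derivative. First I would write $f = G_*(1+g)$ with $G_* = G_{u_*(t)}$ and differentiate in time. The key point is that $G_*$ itself depends on $t$ only through $u_*(t)$, so $\partial_t G_* = (\partial_u G_u)\big|_{u=u_*}\cdot u_*'$. From the explicit form \eqref{Gibbsstate} of the Gibbs state, a direct computation gives $\partial_t G_* = \tfrac{1}{D}(v-u_*)\cdot u_*' \, G_* - \tfrac{1}{D}\langle (v-u_*)\cdot u_*'\rangle_{G_*} G_*$, where the second term comes from differentiating the normalization $\mathcal Z(u_*)$; since $u_* \in \mathcal{S}$ is a stationary point, $\int (v-u_*)G_* dv = 0$, so the correction term vanishes and $\partial_t G_* = \tfrac{1}{D}(v-u_*)\cdot u_*'\, G_*$. (One should double-check the sign and that the $v$-independent piece is exactly the mean of $(v-u_*)\cdot u_*'$ against $G_*$, which is zero.)

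Next, I would substitute into $\partial_t f = \partial_t(G_*(1+g)) = (\partial_t G_*)(1+g) + G_* \partial_t g$ and use the PDE \eqref{PDE} for $\partial_t f$. One already knows from the unprojected computation that $f = G_u(1+g)$ solves \eqref{PDE} iff $g$ solves \eqref{evoldeg}; the only new feature here is that $u=u_*(t)$ moves, which produces exactly the extra term $-(\partial_t G_*/G_*)(1+g)$ relative to the frozen-$u$ identity. Plugging in $\partial_t G_*/G_* = \tfrac{1}{D}(v-u_*)\cdot u_*'$ and rearranging yields
\begin{align*}
\partial_t g = \mathcal{L}_{*}g + \mathcal{R}_{*}g - \frac{(v-u_*)\cdot u_*'}{D}(1+g).
\end{align*}
To match the claimed form \eqref{evoldegstar}, i.e.\ with $v\cdot u_*'$ in place of $(v-u_*)\cdot u_*'$, I would use the orthogonality $u_*' \cdot u_* = 0$: since $u_*(t) = r(D)\,u_f(t)/|u_f(t)|$ has constant norm $r(D)$, differentiating $|u_*|^2 = r(D)^2$ gives $u_*\cdot u_*' = 0$, so $(v-u_*)\cdot u_*' = v\cdot u_*'$. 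This simultaneously gives the last assertion once I also check $u_*'\cdot v_g = 0$: by \eqref{vg} (with $u=u_*$) one has $v_g = \tfrac1D\int g\,v\,G_* dv = \tfrac1D(u_f - u_*)$ (using $\int v\,G_* dv = u_*$ because $u_*\in\mathcal S$), and $u_* = r(D)u_f/|u_f|$ is parallel to $u_f$, so $u_f - u_*$ is parallel to $u_f$, which is orthogonal to $u_*'$; hence $v_g\cdot u_*'=0$.

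The regularity statement — that $g(t)\in H^1(\mathbb{R}^d,G_*(t)\,dv)$ for all $t>0$ — would follow by combining \Cref{exuniciteL2} (applied in a frozen frame, or directly from parabolic smoothing for \eqref{evoldegstar}) with the assumption $f_{\mathrm{ini}}\in\bigcap_{u\in\mathcal S}L^2(\mathbb R^d,G_u^{-1}dv)$, which guarantees $g_{\mathrm{ini}}\in X_{u_*(0)}$ and, more importantly, that the relevant $L^2$-weighted norm stays finite as $u_*(t)$ varies over $\mathcal S$ (this is where the stronger hypothesis of \Cref{thm3}, as opposed to membership in a single $L^2(G_u^{-1}dv)$, is genuinely used). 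The main obstacle I anticipate is not the algebraic manipulation but making the time-differentiation of $G_*(t)$ rigorous: one must know a priori that $t\mapsto u_*(t)$ is $C^1$ with bounded derivative so that $\partial_t G_*$ exists in the appropriate sense and the product rule is legitimate; this is exactly what \Cref{convdeJf} supplies via the bound \eqref{deriv0} on $du_f/dt$ (away from $u_f = 0$, which is ensured for large $t$ since $|u_f|\to r(D)>0$), so the argument should go through cleanly for $t$ large, and the finiteness of moments from \Cref{lemmedesmoments} handles the $v$-weighted integrals that appear.
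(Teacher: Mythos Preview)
Your proposal is correct and follows essentially the same approach as the paper: freeze $u_0=u_*(t_0)$, apply \Cref{exuniciteL2} with initial datum in $L^2(G_{u_0}^{-1}dv)$ (which is exactly where the intersection hypothesis $f_{\mathrm{ini}}\in\bigcap_{u\in\mathcal S}L^2(G_u^{-1}dv)$ is used) to obtain the $H^1$ regularity, then differentiate $f=G_*(1+g)$ using $\nabla_uG_u=\tfrac{v-u}{D}G_u$ at $u=u_*\in\mathcal S$ together with $u_*\cdot u_*'=0$, and deduce $v_g\cdot u_*'=0$ from $v_g=\tfrac1D(u_f-u_*)$ being parallel to $u_f$. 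Your explicit treatment of the normalization term in $\partial_tG_*$ (and why it vanishes on $\mathcal S$) is a welcome clarification the paper leaves implicit.
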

 \begin{proof}
 Let $t_0>0$ and $\mathbf{u}_0:=\mathbf{u}_*(t_0)$. Let $f(t)=G_{\mathbf{u}_0}(1+g^0(t))$ for $t\in \mathbb{R}$. By assumption of \Cref{thm3}, $f_{\mathrm{in}}\in \cap_{\mathbf{u}\in \mathcal{S}} L^2(\mathbb{R}^d,G_{\mathbf{u}}^{-1}dv)\subset L^2(\mathbb{R}^d,G_{\mathbf{u}_0}^{-1}dv)$. Hence, $g^0_{\mathrm{ini}}:=g^0(0)\in L^2(\mathbb{R}^d,G_{\mathbf{u}_0}^{}dv) .$
 Thanks to \Cref{exuniciteL2} we get $g^0 \in X_T$ hence, $$g(t_0)=g^0(t_0)\in H^1(\mathbb{R}^d,G_{\mathbf{u}_0 }dv).$$ 
 On one hand, using \eqref{evoldeg}, 
 \begin{align*}
     \partial_tf=G_{\mathbf{u}_0}\partial_tg^0=G_{\mathbf{u}_0}(\mathcal{L}_{\mathbf{u}_0}g^0+\mathcal{R}_{\mathbf{u}_0}g^0). 
 \end{align*}
 On the other hand, differentiating $f=G_{*}(1+g)$,
     \begin{align*}
         \partial_tf=\mathbf{u}_*'\nabla_{\mathbf{u}_*}(G_{\mathbf{u}_*})(1+g)+G_{*}\partial_tg.
     \end{align*}
    Using these equalities at time $t_0$, we obtain \eqref{evoldegstar} since 
    \begin{align}
     \label{nalblau}
         \nabla_{\mathbf{u}}G_{\mathbf{u}}(v)=\frac{v-\mathbf{u}}{D}G_{\mathbf{u}}(v)
     \end{align}
    and $\mathbf{u}_*'\cdot \mathbf{u}_*=0$. Since $\mathbf{u}_{f}=\mathbf{u}_*+\int_{\mathbb{R}^d}g \, v\, G_{*}\,dv$ and $\mathbf{u}_* \in \mathcal{S}$, we obtain $\mathbf{u}_*'\cdot \int_{\mathbb{R}^d}g\,v \,G_{*}\,dv=0$.
 \end{proof}

  The advantage of considering the perturbation around $G_{\mathbf{u}_*}$ instead of $G_{\mathbf{u}_{\infty}}$ is that $\mathbf{v}_g$ and $\mathbf{u}_*$ are always colinear, hence \Cref{lemmeXingyu}, \Cref{lemmenormequi} and \Cref{normequiH1} are rewritten as 
 \begin{align}
 \label{30}
     Q_{2,\mathbf{u}_*}[g]\geq \beta(D)^2Q_{1,\mathbf{u}_*}[g],
 \end{align}
 \begin{align}
 \label{31}
     Q_{1,\mathbf{u}_*}[g]\geq \eta(D)^2\lVert  g\rVert_{2,\mathbf{u}_*}^2,
 \end{align}
and 
 \begin{align}
 \label{32}
        a(D)Q_{2,\mathbf{u}_*}[g]^{\frac{1}{2}}\leq \lVert \nabla g\rVert_{2,\mathbf{u}_*} \leq   b(D) Q_{2,\mathbf{u}_*}[g]^{\frac{1}{2}}.
    \end{align}

 \begin{lemma}
\label{deriveps}
 Under the assumptions of \Cref{thm3}, if $f$ solves \eqref{PDE} and $g$ and $\mathbf{u}_*$ are defined by \eqref{gstar} and \eqref{ustar}, then 
    \begin{align}
        \frac{1}{2}\frac{d}{dt}\langle g,g\rangle _{\mathbf{u}_*}= -Q_{2,\mathbf{u}_*}[g]+ R[g]
    \end{align}
where 
\begin{align}
\label{expressiondeR}
    R[g]&=D^2 \mathbf{v}_g  \int_{\mathbb{R}^d}g(\nabla g- \mathbf{v}_g)G_{\mathbf{u}_*}\, dv -\frac{\mathbf{u}_*'}{2}\cdot \int_{\mathbb{R}^d}g^2vG_{\mathbf{u}_*}dv.
\end{align}
\end{lemma}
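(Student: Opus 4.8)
The proof is a careful differentiation of $\langle g,g\rangle_{u_*}$; the only novelty compared with the fixed-state computation \eqref{derivéPS} is that the bilinear form $\langle\cdot,\cdot\rangle_{u_*(t)}$ now moves with time through the weight $G_{u_*(t)}$. By the definitions of $\langle\cdot,\cdot\rangle_{u_*}$ and $v_g$,
\[
\langle g,g\rangle_{u_*}=D\int_{\mathbb{R}^d}g^2\,G_{u_*}\,dv-\Bigl|\int_{\mathbb{R}^d}g\,v\,G_{u_*}\,dv\Bigr|^2 ,
\]
and, setting $w:=\int_{\mathbb{R}^d}g\,v\,G_{u_*}\,dv=D v_g=u_f-u_*$, I would use \eqref{nalblau}, i.e. $\partial_t G_{u_*}=\tfrac1D(u_*'\cdot(v-u_*))G_{u_*}$, to obtain
\[
\tfrac{d}{dt}\langle g,g\rangle_{u_*}=2\langle\partial_t g,g\rangle_{u_*}+\int_{\mathbb{R}^d}g^2\,(u_*'\cdot(v-u_*))\,G_{u_*}\,dv-\tfrac2D\,w\cdot\int_{\mathbb{R}^d}g\,(u_*'\cdot(v-u_*))\,v\,G_{u_*}\,dv .
\]

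Next I would substitute the evolution equation \eqref{evoldegstar} for $\partial_t g$ into the first term. By \Cref{symmetrie}, $\langle\mathcal{L}_*g,g\rangle_{u_*}=-Q_{2,u_*}[g]$ and $\langle\mathcal{R}_*g,g\rangle_{u_*}=D^2v_g\cdot\int_{\mathbb{R}^d}g(\nabla g-v_g)\,G_{u_*}\,dv$, so that, after division by $2$, the term $-Q_{2,u_*}[g]$ and the first part of $R[g]$ already appear. What then remains is to show that the contribution $-\langle\tfrac1D(v\cdot u_*')(1+g),g\rangle_{u_*}$ of the last term of \eqref{evoldegstar}, added to the two $\partial_t G_{u_*}$ terms above, collapses to $-\tfrac{u_*'}2\cdot\int_{\mathbb{R}^d}g^2\,v\,G_{u_*}\,dv$.

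This reduction uses three orthogonality facts. (i) $u_*'\cdot u_*=0$, since $|u_*|\equiv r(D)$, which lets me replace $v-u_*$ by $v$ in the two $\partial_t G_{u_*}$ terms. (ii) The last identity of \Cref{lemmestar}, $u_*'\cdot w=u_*'\cdot\int_{\mathbb{R}^d}g\,v\,G_{u_*}\,dv=0$. (iii) Since $u_*\in\mathcal{S}$, the compatibility relation \eqref{compatibilityequation} gives $\int_{\mathbb{R}^d}(v-u_*)\,G_{u_*}\,dv=0$; combining this with \cite[Corollary~2.2]{Li2021}, which evaluates $\int_{\mathbb{R}^d}|(v-u_*)\cdot z|^2\,G_{u_*}\,dv=D\bigl((\kappa(D)-1)(z\cdot e)^2+|z|^2\bigr)$ for $e=u_*/r(D)$, and with (i), yields the key identity $\int_{\mathbb{R}^d}(v\cdot u_*')\,v\,G_{u_*}\,dv=D u_*'$. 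Using (i)--(iii), a direct computation gives $\langle\tfrac1D(v\cdot u_*')(1+g),g\rangle_{u_*}=u_*'\cdot\int_{\mathbb{R}^d}g^2v\,G_{u_*}\,dv-\tfrac1D\,w\cdot\int_{\mathbb{R}^d}g\,(u_*'\cdot v)\,v\,G_{u_*}\,dv$; subtracting twice this from the two $\partial_t G_{u_*}$ terms cancels the $w\cdot\int_{\mathbb{R}^d}g\,(u_*'\cdot v)\,v\,G_{u_*}$ pieces and leaves exactly $-u_*'\cdot\int_{\mathbb{R}^d}g^2v\,G_{u_*}\,dv$. Collecting everything, $\tfrac{d}{dt}\langle g,g\rangle_{u_*}=-2Q_{2,u_*}[g]+2D^2v_g\cdot\int_{\mathbb{R}^d}g(\nabla g-v_g)\,G_{u_*}\,dv-u_*'\cdot\int_{\mathbb{R}^d}g^2v\,G_{u_*}\,dv$, and dividing by $2$ is exactly the asserted identity with $R[g]$ as in \eqref{expressiondeR}.

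I expect fact (iii) to be the main obstacle: one has to notice that the quantity $\bigl(\int_{\mathbb{R}^d}(v\cdot u_*')\,v\,G_{u_*}\,dv\bigr)\cdot w$ is governed by the covariance matrix of $G_{u_*}$, which acts as multiplication by $D$ on the hyperplane orthogonal to $u_*$, so that, together with (i)--(ii), this quantity equals $D\,u_*'\cdot w=0$; without this a spurious term survives and the clean formula breaks. The rest is a careful but routine rearrangement. One also has to justify differentiating under the integral sign and the integrations by parts behind \Cref{symmetrie}; this follows from the regularity $g\in X_T$ of \Cref{exuniciteL2} and the uniform moment bounds of \Cref{lemmedesmoments} applied to $f=G_{u_*}(1+g)$, after a standard truncation/approximation argument.
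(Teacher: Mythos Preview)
Your proof is correct and follows the same overall structure as the paper's: differentiate $\langle g,g\rangle_{u_*}$ taking into account the time dependence of $G_{u_*}$, substitute \eqref{evoldegstar}, apply \Cref{symmetrie}, and reduce the remaining $u_*'$-terms to $-\tfrac{1}{2}u_*'\cdot\int g^2 v\,G_{u_*}\,dv$. The only genuine difference is in how the key vanishing $v_g\cdot\int(v\cdot u_*')\,v\,G_{u_*}\,dv=0$ is established: the paper chooses an orthonormal basis with $e_1=u_*'/|u_*'|$, $e_2=v_g/|v_g|$, and argues by parity that $\int v_1 v_2\,G_*\,dv=0$; you instead compute the second-moment matrix of $G_{u_*}$ via \cite[Corollary~2.2]{Li2021} to obtain the stronger identity $\int(v\cdot u_*')\,v\,G_{u_*}\,dv=D\,u_*'$, which combined with $u_*'\cdot v_g=0$ gives the same conclusion. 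Your route is slightly more informative (it identifies the vector, not just its component along $v_g$) and avoids the coordinate change, while the paper's parity argument is self-contained and does not invoke the external reference; both are short and equally valid.
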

\begin{proof}
We have:
\begin{equation*}
\begin{split}
    \frac{1}{2}\frac{d}{dt}\langle g, g \rangle_{\mathbf{u}_*}
    &= \langle g, \partial_t g \rangle_{\mathbf{u}_*}
    + \frac{1}{2} \nabla_\mathbf{u} \langle g, g \rangle_\mathbf{u} \cdot \mathbf{u}_*'.
\end{split}
\end{equation*}
Using \eqref{evoldegstar}, we obtain,
\begin{equation}
\label{bigequa}
\begin{split}
    \frac{1}{2} \frac{d}{dt} \langle g, g \rangle_{\mathbf{u}_*}
    &= -Q_{2,\mathbf{u}_*}[g] 
    + D^2\, \mathbf{v}_g \cdot \int_{\mathbb{R}^d} g(\nabla g - \mathbf{v}_g) G_*\,dv \\
    &\quad - \frac{1}{D} \langle v \cdot \mathbf{u}_*'(1+g), g \rangle_{\mathbf{u}_*}
    + \frac{1}{2} \nabla_\mathbf{u} \langle g, g \rangle_\mathbf{u} \cdot \mathbf{u}_*'
\end{split}
\end{equation}
Let us start with the computation of $\langle v\cdot \mathbf{u}_*'(1+g),g\rangle_{\mathbf{u}_*}$.
By definition \eqref{definitionQ1},  
\begin{align*}
    \langle v\cdot \mathbf{u}_*',g\rangle_{\mathbf{u}_*}&=D\, \mathbf{u}_*'\cdot \int_{\mathbb{R}^d} v \, g\,G_{*}\,dv- D\int_{\mathbb{R}^d}(v\cdot \mathbf{u}_*') v  \,G_{*}\,dv \cdot \mathbf{v}_g.
\end{align*}
Assume $\mathbf{v}_g\neq 0$ and $\mathbf{u}_*'\neq0$. Since $\mathbf{v}_g \cdot \mathbf{u}_*'=0$, we can consider an orthonormal basis $\{e_1,e_2,..,e_d\}$ with $e_1=\mathbf{u}_*'/|\mathbf{u}_*'|$ and $e_2=\mathbf{v}_g/|\mathbf{v}_g|$. In these coordinates, by writing~$v=\sum_{i=1}^d v_ie_i$, we have $v\cdot \mathbf{u}_*'=v_1 |\mathbf{u}_*'|$ and $v\cdot \mathbf{v}_g=v_2|\mathbf{v}_g|$. Since the function $$v_1\mapsto v_1 \,v_2 \,G_{*} \left(\sum_{i=1}^dv_ie_i\right)=  v_1 \,v_2 \exp \left\{-\frac{1}{D}\left[\frac{\alpha }{4} \left(\sum_{i=1}^d|v_i|^2 \right)^2-\frac{\alpha}{2}\left( \sum_{i=1}^d |v_i^2|\right)+ \sum_{i=1}^d |v_i-\mathbf{u}_{f}\cdot e_i|^2\right] \right\}$$ is odd for all $(v_2,v_3,...v_d) \in \mathbb{R}^{d-1}$ because $\mathbf{u}_{f}\cdot e_i=0$ for $i\geq 2$, we obtain 
\begin{align*}
    |\mathbf{v}_g||\mathbf{u}_*'|\int_{\mathbb{R}^d}(v\cdot \mathbf{u}_*') v  \,G_{*}\,dv \cdot \mathbf{v}_g=\int \cdots \int  \, dv_2 \cdots dv_d
\left(\int_{\mathbb{R}}v_2 v_1  \,G_{*} \left(\sum_{i=1}^dv_ie_i\right)dv_1\right)=0,
\end{align*}
thanks to the change of variable $(v_1,v_2,...v_d)\mapsto \sum_{i=1}^dv_ie_i$.
Since $\mathbf{u}_*'\cdot \mathbf{v}_g=0$ we conclude 
$$\langle v\cdot \mathbf{u}_*',g\rangle_{\mathbf{u}_*}=0.$$
Hence, 

\begin{align}
\label{pop}
    \langle v\cdot \mathbf{u}_*'(1+g),g\rangle_{\mathbf{u}_*}=\langle v\cdot \mathbf{u}_*'g,g\rangle_{\mathbf{u}_*}=D\,\mathbf{u}_*' \cdot \int_{\mathbb{R}^d}v\,g^2G_{*}\,dv- D\mathbf{v}_g \cdot\int_{\mathbb{R}^d}(v\cdot \mathbf{u}_*')\,v\,g\,G_{*}\, dv.
\end{align}
Let us compute $\nabla_\mathbf{u}\langle g,g \rangle_\mathbf{u} $.
Using \eqref{definitionQ1} and the expression \eqref{nalblau}, we obtain:
\begin{align}
\label{bigequa2}
    \nabla_\mathbf{u}\langle g,g \rangle_\mathbf{u}&=D\,\int_{\mathbb{R}^d}g^2\,\nabla_\mathbf{u} G_\mathbf{u}\,dv- \nabla_\mathbf{u}\left| \notag \int_{\mathbb{R}^d}g\,v\,G_\mathbf{u}\,dv\right|^2\\&=\int_{\mathbb{R}^d}g^2(v-\mathbf{u})G_\mathbf{u}\,dv+2  \int_{\mathbb{R}^d}(v \otimes v)\, \mathbf{v}_g \,g\, G_\mathbf{u}\,dv-2\,D |\mathbf{v}_g|^2\mathbf{u},
\end{align}
where $v\otimes v$ denotes the matrix $(v_iv_j)_{1\leq i,j\leq d}$.\\
Finally,
since $\mathbf{u}_*'\cdot \mathbf{u}=0$, replacing \eqref{pop} and \eqref{bigequa2} in \eqref{bigequa},
\begin{align*}
    \frac{1}{2}\frac{d}{dt}\langle g,g\rangle_{\mathbf{u}_*}&=-Q_{2,\mathbf{u}_*}[g] + D^2 \mathbf{v}_g \cdot  \int_{\mathbb{R}^d}g(\nabla g- \mathbf{v}_g)G_{{*}}\,dv\\& - \left[\,\mathbf{u}_*' \cdot \int_{\mathbb{R}^d}v\,g^2G_{*}\,dv - \mathbf{v}_g \cdot\int_{\mathbb{R}^d}(v\cdot \mathbf{u}_*')\,v\,g\,G_{*}\,dv\right]  \\&+\frac{1}{2}  \mathbf{u}_*'\cdot \int_{\mathbb{R}^d}g^2\,v\,G_{*}\,dv +  \mathbf{u}_*'\cdot\int_{\mathbb{R}^d}(v\otimes v) \,\mathbf{v}_g\, g\, G_{*}\, dv.
\end{align*}
Using that $\mathbf{v}_g\cdot (v\cdot \mathbf{u}_*')v=\mathbf{u}_*'\cdot (v\otimes v)\mathbf{v}_g$, we obtain the result.
\end{proof}
\end{subsection}
\begin{subsection}{Control of the nonlinearity.}

\begin{lemma}
\label{lemmemom}
   There exists a constant $C_D>0$ such that for all $\mathbf{u} \in \mathcal{S}$ and $g \in H^1(\mathbb{R}^d, G_\mathbf{u}dv)$, there holds 
   \begin{align*}
       \frac{\alpha^2}{8D^2} \int_{\mathbb{R}^d}g^2|v|^6G_\mathbf{u}dv \leq C_D \int_{\mathbb{R}^d}g^2G_\mathbf{u}dv+ \int_{\mathbb{R}^d}|\nabla g|^2G_\mathbf{u}dv.
   \end{align*}
\end{lemma}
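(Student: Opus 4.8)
The plan is to establish the weighted moment bound $\int g^2 |v|^6 G_u\,dv \lesssim \int g^2 G_u\,dv + \int |\nabla g|^2 G_u\,dv$ by a Gaussian-type integration by parts, exploiting that $G_u$ has a super-Gaussian (quartic) tail so that multiplication by polynomials in $v$ is controlled by the Dirichlet form. Concretely, I would write $\int g^2 |v|^6 G_u\,dv$ and use the identity $|v|^6 G_u = P(v)\cdot \nabla G_u + (\text{lower order}) G_u$, where $P(v)$ is an explicit polynomial vector field chosen so that $\nabla \log G_u = -\tfrac{1}{D}(\nabla\psi_\alpha(v) + v - u) = -\tfrac{1}{D}(\alpha|v|^2 v - \alpha v + v - u)$ has leading term $-\tfrac{\alpha}{D}|v|^2 v$. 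The dominant balance is $|v|^6 \sim \tfrac{D}{\alpha}\,|v|^{-2}\,v\cdot\big(-\nabla\log G_u\big)\,|v|^{?}$; more precisely one takes the vector field $\Phi(v) := \tfrac{D}{\alpha} v$, whose divergence against $G_u$ produces the term $\int g^2\, v\cdot(\alpha|v|^2 v)\tfrac1D G_u\,dv = \tfrac{\alpha}{D}\int g^2 |v|^4\,dv$ — wait, that only gives $|v|^4$. So instead I would iterate: choose $\Phi(v) = \tfrac{D}{\alpha}|v|^2 v$ (or a suitable renormalization), so that $-\Phi\cdot\nabla\log G_u = |v|^2 v\cdot(\alpha|v|^2 v - (\alpha-1)v - u)/1 = \alpha|v|^6 + O(|v|^4) + O(|v|^3)$, giving exactly the leading $|v|^6$ weight.

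The key steps, in order, would be: (1) integrate by parts in $\int g^2 \Phi \cdot \nabla G_u\,dv = -\int \mathrm{div}(g^2\Phi)\,G_u\,dv = -\int 2g\,\nabla g\cdot\Phi\,G_u\,dv - \int g^2\,\mathrm{div}\,\Phi\,G_u\,dv$, with $\Phi(v) = \tfrac{1}{\alpha}|v|^2 v$ (up to constants); (2) use $\Phi\cdot\nabla\log G_u = -\tfrac1D|v|^2 v\cdot(\nabla\psi_\alpha + v - u) = -|v|^6 + (\tfrac1D - \tfrac\alpha D + \tfrac\alpha D)\,(\cdots)$, isolating $\int g^2|v|^6 G_u\,dv$ on the left with coefficient a positive multiple of $\tfrac{\alpha^2}{D^2}$ after tracking constants; (3) bound the cross term $\int g\,\nabla g\cdot\Phi\,G_u\,dv$ by Cauchy–Schwarz as $\le \epsilon\int |\nabla g|^2 G_u\,dv + C_\epsilon\int g^2|v|^6 G_u\,dv$ — but note $|\Phi|^2 \sim |v|^6$, so the absorbed term is again $\int g^2|v|^6 G_u\,dv$ and must be reabsorbed with a small constant; (4) bound the lower-order terms $\int g^2|v|^k G_u\,dv$ for $k \le 5$ (coming from $\mathrm{div}\,\Phi = (d+2)|v|^2$ and from the subleading parts of $\nabla\log G_u$) by $\epsilon\int g^2|v|^6 G_u\,dv + C_\epsilon\int g^2 G_u\,dv$ using Young's inequality, since $|v|^k \le \epsilon|v|^6 + C_\epsilon$ for $k < 6$; (5) collect all $\epsilon$-terms on the left and choose $\epsilon$ small enough (depending only on $D$, $\alpha$, $d$) to absorb them, yielding the claimed inequality with a constant $C_D$ depending only on $D$ (and $\alpha$, $d$, which are fixed).

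The main obstacle is bookkeeping of constants so that the coefficient on the left is exactly $\tfrac{\alpha^2}{8D^2}$ as stated: one must choose the vector field $\Phi$ with the right normalization and verify that after the Cauchy–Schwarz split of the gradient cross-term one still retains at least $\tfrac{\alpha^2}{8D^2}$ (the factor $8$ presumably leaves room for the $\epsilon$-absorptions). A secondary subtlety is that $g \in H^1(\mathbb{R}^d, G_u\,dv)$ alone does not a priori guarantee $\int g^2|v|^6 G_u\,dv < \infty$, so the integration by parts should first be justified on a truncation $g_R = g\,\chi(v/R)$ (or by a density/approximation argument), deriving the inequality with uniform constants and then passing to the limit by monotone convergence; if the right-hand side is infinite there is nothing to prove, and if it is finite the bound on $\int g^2|v|^6 G_u\,dv$ follows. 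The uniformity in $u \in \mathcal{S}$ is automatic because all the relevant quantities ($\|\nabla\log G_u\|$, moments $W_k$) depend on $u$ only through $|u| = r(D)$, which is fixed, and the Poincaré constant $\Lambda(D)$ from \eqref{Poincaréinequality} is the common value $\Lambda_u$ on $\mathcal{S}$.
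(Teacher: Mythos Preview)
Your approach is correct and will yield the lemma, but it differs from the paper's route. You use a vector-field multiplier $\Phi(v)\sim |v|^2 v$ and integrate by parts in $\int g^2\,\Phi\cdot\nabla G_u\,dv$, then absorb the cross term $2\int g\,\nabla g\cdot\Phi\,G_u\,dv$ via Cauchy--Schwarz (noting $|\Phi|^2\sim|v|^6$) and the lower-order moments via Young's inequality. The paper instead performs the ground-state substitution $h=g\,G_u^{1/2}$ and expands
\[
\int_{\mathbb{R}^d}|\nabla g|^2 G_u\,dv
=\int_{\mathbb{R}^d}|\nabla h|^2\,dv
+\int_{\mathbb{R}^d}h^2\Bigl(\tfrac14|\nabla\phi_u|^2-\tfrac12\Delta\phi_u\Bigr)dv,
\]
with $\phi_u=-\log G_u+\mathrm{const}$. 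Since $|\nabla\phi_u|^2=\tfrac{\alpha^2}{D^2}|v|^6+P_u(v)$ with $\deg P_u\le 5$, the weight $\tfrac{\alpha^2}{4D^2}|v|^6$ appears immediately, and one only has to bound the degree-$\le 5$ polynomial $S_u=\tfrac14 P_u-\tfrac12\Delta\phi_u$ by $\tfrac{\alpha^2}{8D^2}|v|^6$ outside a fixed ball. This avoids your Cauchy--Schwarz reabsorption step entirely and produces the stated constant $\tfrac{\alpha^2}{8D^2}$ without any $\epsilon$-tracking; the price is that the ground-state identity must be known. Your method is more hands-on and equally valid, but the leading coefficient you obtain before absorption is of order $\alpha/D$ rather than $\alpha^2/D^2$ (a minor bookkeeping slip in your step~(2)); after the $\epsilon$-splits you still recover a constant at least $\tfrac{\alpha^2}{8D^2}$ provided $\epsilon$ is chosen suitably, so the lemma follows either way.
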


\begin{proof} 
     Let $h=g\,G_\mathbf{u}^{\frac{1}{2}}$. We recall that $G_\mathbf{u}$ is given by \eqref{Gibbsstate}. Let us denote ~${\phi_\mathbf{u}(v)= \frac{1}{D}(\frac{1}{2}|v-\mathbf{u}|^2+ \frac{\alpha}{4}|v|^4-\frac{\alpha}{2}|v|^2)}.$ We have,

    \begin{align*}
        \int_{\mathbb{R}^d}|\nabla g|^2 G_\mathbf{u}dv 
        &=  \int_{\mathbb{R}^d}\left(|\nabla h |^2+ \nabla h \cdot \nabla \phi_{\mathbf{u}} h + \frac{1}{4} |\nabla \phi_\mathbf{u}|^2h^2\right)dv \\
         &= \int_{\mathbb{R}^d}|\nabla h |^2dv + \int_{\mathbb{R}^d} h^2\left( \frac{1}{4} |\nabla \phi_\mathbf{u}|^2- \frac{1}{2} \Delta  \phi_\mathbf{u}\right)dv.
    \end{align*}
We can see that
\begin{align*}
    |\nabla \phi_\mathbf{u}|^2= \frac{\alpha^2}{D^2}|v|^6+ P_\mathbf{u}(v) \text{ and } \Delta  \phi_\mathbf{u}(v)= Q_\mathbf{u}(v)
\end{align*}
with $P_\mathbf{u} $ and $Q_\mathbf{u} $ respectively two polynomials (with $d$ variables) of degree at most than $5$ and $2$. Hence we can write
\begin{align*}
     \int_{\mathbb{R}^d}|\nabla g|^2 G_\mathbf{u}dv \geq \frac{\alpha^2}{4D^2}\int_{\mathbb{R}^d}g^2|v|^6 G_\mathbf{u}dv+\int_{\mathbb{R}^d}g^2S_\mathbf{u}(v) G_\mathbf{u}dv
\end{align*}
where $S_\mathbf{u}(v)= \frac{1}{4}P_\mathbf{u}(v)-\frac{1}{2}Q_\mathbf{u}(v)$ is a polynomial of degree at most than $5$.
\\
Let us take $R=R_D$ big enough in order that, for all $\mathbf{u} \in \mathcal{S}$ and all $|v| \geq R$, there holds
\[|S_\mathbf{u}(v)|\leq \frac{\alpha^2}{8D^2}|v|^6.  \]
In the end, we obtain 
\begin{align*}
     \int_{\mathbb{R}^d}|\nabla g|^2 G_\mathbf{u}dv 
     &\geq \frac{\alpha^2}{8D^2}\int_{\mathbb{R}^d}g^2|v|^6 G_\mathbf{u}dv - \sup_{v\in B_R}S_\mathbf{u}(v)\int_{\mathbb{R}^d}g^2 G_\mathbf{u}dv.
\end{align*}
Here $B_R$ denotes the ball of radius $R$ centered in $0$.
 Hence we obtain the result with the constant \mbox{~$C_D= \sup_{\mathbf{u}\in \mathcal{S}} \sup_{v\in B_{R_D} }| S_\mathbf{u}(v)|$.}
\end{proof}

\begin{lemma}
\label{contoleJ_*}
 Under the assumptions of  \Cref{thm3}. Assume that for all $t\geq 0$, we have $\mathbf{u}_{f}(t) \in \mathcal{N}$ , where $\mathcal{N}$ is defined by \eqref{N}, then
    \begin{align*}
        |\mathbf{u}_*'(t)| \leq \frac{2 \alpha }{r(D)\eta(D)}W_6^\frac{1}{2} Q_{1,\mathbf{u}_*}[g(t,.)]^\frac{1}{2} \text{ where } W_k  \text{ is defined by \eqref{Wk}}
    \end{align*}
\end{lemma}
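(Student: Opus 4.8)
The goal is to bound $|u_*'|$ by $Q_{1,u_*}[g]^{1/2}$ up to an explicit constant. The starting point is \Cref{lemmestar}, which gives the evolution equation \eqref{evoldegstar} and, crucially, the orthogonality $u_*'\cdot v_g=0$. The plan is first to obtain an explicit expression for $u_*'$ in terms of $g$, then to estimate it using the moment control from \Cref{lemmemom} and the coercivity inequality \eqref{31} (i.e. \Cref{lemmenormequi} in the colinear form), exploiting the orthogonality to kill the troublesome terms.

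\emph{Step 1: an explicit formula for $u_*'$.} Recall $u_f = u_* + D\,v_g$ and $u_* = r(D)\,u_f/|u_f|$, so $u_*$ is the radial projection of $u_f$ onto $\mathcal{S}$. Differentiating $u_* = r(D)u_f/|u_f|$ gives $u_*' = \frac{r(D)}{|u_f|}\big(I - \tfrac{u_f\otimes u_f}{|u_f|^2}\big)u_f'$, i.e. $u_*'$ is $\frac{r(D)}{|u_f|}$ times the component of $u_f'$ orthogonal to $u_f$ (equivalently, orthogonal to $u_*$). Now use \eqref{derivéuf}, $u_f' = \alpha u_f - \alpha\int v|v|^2 f\,dv$, together with the identity $\int v|v|^2 G_{u_*}\,dv = W_2$-type moments — more precisely, for $u_*\in\mathcal S$ one has $\int |v|^2 v\,G_{u_*}\,dv = u_*$ (this is the compatibility relation $u_{G_{u_*}}=u_*$ combined with the structure of the drift, as used already in the proof of \Cref{convdeJf}). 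Writing $f = G_*(1+g)$ gives $u_f' = \alpha(u_f - u_*) - \alpha\int v|v|^2 g\,G_*\,dv = \alpha D v_g - \alpha\int v|v|^2 g\,G_*\,dv$. Since $u_*'$ is the projection of $u_f'$ orthogonal to $u_*$ (scaled by $r(D)/|u_f|$), and $v_g$ is colinear with $u_*$ (hence $v_g\perp u_*'$ direction is the relevant decomposition), the term $\alpha D v_g$ drops out of the orthogonal component, leaving
\[
u_*' = -\,\frac{\alpha\, r(D)}{|u_f|}\Big(I - \tfrac{u_*\otimes u_*}{r(D)^2}\Big)\!\int_{\mathbb{R}^d} v\,|v|^2\, g\,G_*\,dv.
\]

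\emph{Step 2: estimate the integral.} Taking norms and using that the projection is a contraction, $|u_*'| \le \frac{\alpha r(D)}{|u_f|}\big|\int v|v|^2 g\,G_*\,dv\big| \le \frac{\alpha r(D)}{|u_f|}\int |v|^3 |g|\,G_*\,dv$. By Cauchy–Schwarz with respect to $G_*\,dv$, $\int |v|^3|g|\,G_*\,dv \le \big(\int |v|^6 G_*\,dv\big)^{1/2}\big(\int g^2 G_*\,dv\big)^{1/2} = W_6^{1/2}\,\lVert g\rVert_{2,u_*}$. Since $u_f(t)\in\mathcal N$ and $\mathcal N$ is contained in a $\delta$-neighborhood of $\mathcal S$ with $\delta < r(D)/2$, we have $|u_f|\ge r(D)/2$, so $\frac{r(D)}{|u_f|}\le 2$. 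This yields $|u_*'|\le 2\alpha\, W_6^{1/2}\,\lVert g\rVert_{2,u_*}$. Finally apply \eqref{31}, which gives $\lVert g\rVert_{2,u_*}^2 \le \eta(D)^{-2} Q_{1,u_*}[g]$, hence $|u_*'|\le \frac{2\alpha}{\eta(D)}W_6^{1/2}Q_{1,u_*}[g]^{1/2}$. (The stated constant carries an extra factor $1/r(D)$; this comes from not throwing away the factor $r(D)/|u_f|\le 2/r(D)\cdot (r(D)/2)$ — in fact the cleanest bookkeeping is $|u_*'|\le \frac{\alpha r(D)}{|u_f|}W_6^{1/2}\lVert g\rVert_{2,u_*}$ and then bounding $r(D)/|u_f|$ differently; one should match the paper's normalization by tracking constants carefully, but the structure is exactly as above.)

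\emph{Main obstacle.} The real work is Step 1: identifying that the $v_g$ term disappears. This relies on two facts that must be assembled carefully — that $u_*'$ is (a rescaling of) the orthogonal-to-$u_*$ projection of $u_f'$, and that $v_g \parallel u_*$ so that $v_g$ contributes nothing to that orthogonal component. The orthogonality $u_*'\cdot v_g = 0$ proved in \Cref{lemmestar} is the key input, and one must be scrupulous about which vector ($u_*$, $u_f$, or $v_g$) is colinear with which. Everything after that — Cauchy–Schwarz, the moment $W_6$, the lower bound $|u_f|\ge r(D)/2$ from $u_f\in\mathcal N$, and the Poincaré-type inequality \eqref{31} — is routine. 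I do not expect to need \Cref{lemmemom} for this particular lemma (it controls $\int g^2|v|^6 G_u$ by the $H^1$ norm and will be used later to absorb nonlinear remainder terms $R[g]$), though it could serve as an alternative route if one preferred to bound $\int|v|^3|g|G_*$ via an $H^1$ norm rather than directly.
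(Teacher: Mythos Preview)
Your proposal is correct and follows essentially the same route as the paper: differentiate $u_*=\pi(u_f)$ with $\pi(v)=r(D)v/|v|$, use \eqref{derivéuf} together with $\int |v|^2 v\,G_*\,dv=u_*$ to reduce $u_*'$ to $d\pi_{u_f}$ applied to $\alpha\int |v|^2 v\,g\,G_*\,dv$ (the terms parallel to $u_f$, namely $u_f$ itself and $u_*$, or equivalently your $v_g$, are annihilated by the projection), then bound the operator norm of $d\pi_{u_f}$ using $|u_f|\ge r(D)/2$, apply Cauchy--Schwarz to produce $W_6^{1/2}\lVert g\rVert_{2,u_*}$, and finish with \eqref{31}. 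Your observation about the extra $1/r(D)$ in the stated constant is also consistent with the paper's bookkeeping, which writes $\lVert d\pi_{u_f}\rVert\le 1/|u_f|\le 2/r(D)$.
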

\begin{proof}
    We have that $\mathbf{u}_*(t)= \pi(\mathbf{u}_{f}(t))$ where $\pi(v):= r(D) \frac{v}{|v|}$ is the projection on $\mathcal{S}$. Using the expression \eqref{derivéuf}, 
\begin{align*}
    \mathbf{u}_*'(t)&= d\pi_{\mathbf{u}_{f}(t)}\mathbf{u}_{f}'(t)
    = d\pi_{\mathbf{u}_{f}(t)}\left(- \alpha \mathbf{u}_{f}(t)+ \alpha \int_{\mathbb{R}^d}|v|^2vfdv \right)
    = d\pi_{\mathbf{u}_{f}(t)}\left(\alpha \int_{\mathbb{R}^d}|v|^2vgG_{*}dv\right),
\end{align*}
where $d\pi_v(w)=\frac{w}{|v|}-\frac{v\cdot w}{|v|^3}v$ denotes the differential of $\pi$. Since $\lVert d\pi_{\mathbf{u}_{f}}\rVert\leq \frac{1}{|\mathbf{u}_{f}|} \leq \frac{2}{r(D)}$, we obtain
\begin{align*}
    |\mathbf{u}_*'(t)| \leq \frac{2 \alpha }{r(D)} \int_{\mathbb{R}^d}|v|^3gG_{*}dv \leq  \frac{2 \alpha }{r(D)} \left(\int_{\mathbb{R}^d}|v|^6G_{*}dv\right)^\frac{1}{2} \lVert g \rVert_{2,\mathbf{u}_*}
    &\leq  \frac{2 \alpha }{r(D)\eta(D)}W_6^\frac{1}{2} Q_{1,\mathbf{u}_*}[g]^\frac{1}{2},
\end{align*}
by \Cref{lemmenormequi}.
\end{proof}
\begin{lemma}
\label{controlereste1} Under the assumptions of \Cref{thm3}, there exists a positive constant $B_D$ such that,
    \begin{align}
    \label{eq45}
        R[g(t,.)] \leq B_D \max(|\mathbf{v}_g(t)|,|\mathbf{u}_*'(t)|)Q_{2,\mathbf{u}_*}[g(t,.)] \,\, \, \forall t\geq0 \text{  where  } R[g(t,.)] \text{ is defined by } \eqref{expressiondeR} 
    \end{align}
Moreover, if $\mathbf{u}_{f}(t) \in \mathcal{N}$ for all $t\geq 0$, 
\begin{align}
    \label{controlereste} 
        R[g(t,.)] \leq K_D Q_{2,\mathbf{u}_*}[g(t,.)]Q_{1,\mathbf{u}_*}^{\frac{1}{2}}[g(t,.)]  \,\, \forall t\geq 0,
    \end{align}
with $K_D= B_D\left(\frac{W_2^{\frac{1}{2}}}{D \eta(D) }+ \frac{2 \alpha W_6^{\frac{1}{2}}}{r(D)\eta(D)}\right).$
\end{lemma}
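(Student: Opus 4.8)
The plan is to bound the two terms of $R[g]$ in \eqref{expressiondeR} separately using Cauchy--Schwarz, the moment estimate of \Cref{lemmemom}, and the norm equivalences \eqref{30}--\eqref{32}. Recall that $v_g = \frac{1}{D}\int_{\mathbb{R}^d} g\,v\,G_{u_*}\,dv$, so $|v_g| \le \frac{1}{D}W_2^{1/2}\lVert g\rVert_{2,u_*}$.

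\textbf{First term.} For $T_1 := D^2 v_g\cdot\int_{\mathbb{R}^d}g(\nabla g - v_g)G_{u_*}\,dv$, I would write $T_1 = D^2 v_g\cdot\int_{\mathbb{R}^d}g(\nabla g - v_g)G_{u_*}\,dv = D\,v_g\cdot\big(D\int_{\mathbb{R}^d}g\nabla g\,G_{u_*}\,dv - D\,v_g\lVert g\rVert_{2,u_*}^2 \cdot(\text{mass term})\big)$ --- more simply, bound $|T_1| \le D^2|v_g|\,\lVert g\rVert_{2,u_*}\,\lVert \nabla g - v_g\rVert_{2,u_*} = |v_g|\,\lVert g\rVert_{2,u_*}\,D\cdot D\lVert \nabla g - v_g\rVert_{2,u_*} = |v_g|\,\lVert g\rVert_{2,u_*}\,D\,Q_{2,u_*}[g]^{1/2}$. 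Wait, more carefully: $D^2\lVert\nabla g - v_g\rVert_{2,u_*}^2 = Q_{2,u_*}[g]$, so $D\lVert\nabla g - v_g\rVert_{2,u_*} = Q_{2,u_*}[g]^{1/2}$, giving $|T_1| \le D\,|v_g|\,\lVert g\rVert_{2,u_*}\,Q_{2,u_*}[g]^{1/2}$. Since $Q_{2,u_*}[g]^{1/2} \le (\text{const})\,Q_{2,u_*}[g]^{1/2}$ is not enough on its own, I instead use $\lVert g\rVert_{2,u_*}\,Q_{2,u_*}[g]^{1/2} \le (\text{const})\,Q_{2,u_*}[g]$ via \eqref{30} and \eqref{31} (which give $\lVert g\rVert_{2,u_*}^2 \le \eta(D)^{-2}Q_{1,u_*}[g] \le \eta(D)^{-2}\beta(D)^{-2}Q_{2,u_*}[g]$), so that $|T_1| \le \frac{D}{\eta(D)\beta(D)}|v_g|\,Q_{2,u_*}[g]$.

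\textbf{Second term.} For $T_2 := -\frac{u_*'}{2}\cdot\int_{\mathbb{R}^d}g^2 v\,G_{u_*}\,dv$, I bound $|T_2| \le \frac12|u_*'|\int_{\mathbb{R}^d}g^2|v|\,G_{u_*}\,dv$. Here the moment lemma \Cref{lemmemom} enters: $\int_{\mathbb{R}^d}g^2|v|\,G_{u_*}\,dv \le \int_{\mathbb{R}^d}g^2(1+|v|^6)G_{u_*}\,dv \le (1 + \tfrac{8D^2}{\alpha^2}C_D)\lVert g\rVert_{2,u_*}^2 + \tfrac{8D^2}{\alpha^2}\lVert\nabla g\rVert_{2,u_*}^2$, and then $\lVert\nabla g\rVert_{2,u_*}^2 \le b(D)^2 Q_{2,u_*}[g]$ by \eqref{32} while $\lVert g\rVert_{2,u_*}^2 \le \eta(D)^{-2}\beta(D)^{-2}Q_{2,u_*}[g]$ as above, so $|T_2| \le (\text{const})\,|u_*'|\,Q_{2,u_*}[g]$. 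Combining the two bounds and taking $B_D$ to be the maximum of the two constants (adjusted by a factor $2$) yields \eqref{eq45}. For \eqref{controlereste}, when $u_f(t)\in\mathcal N$ for all $t$, I substitute: $|v_g| = \frac1D|u_f - u_*| \le \frac1D\cdot D\lVert g\rVert_{2,u_*}\,(\dots)$ --- more directly $|v_g| \le \frac{1}{D}W_2^{1/2}\lVert g\rVert_{2,u_*} \le \frac{W_2^{1/2}}{D\eta(D)}Q_{1,u_*}[g]^{1/2}$ by \eqref{31}, and $|u_*'| \le \frac{2\alpha W_6^{1/2}}{r(D)\eta(D)}Q_{1,u_*}[g]^{1/2}$ by \Cref{contoleJ_*}; hence $\max(|v_g|,|u_*'|) \le \big(\frac{W_2^{1/2}}{D\eta(D)} + \frac{2\alpha W_6^{1/2}}{r(D)\eta(D)}\big)Q_{1,u_*}[g]^{1/2}$, and multiplying through by $B_D$ gives $K_D$.

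The main obstacle is the second term: controlling $\int g^2|v|\,G_{u_*}\,dv$ requires the weighted moment bound \Cref{lemmemom}, which is why that lemma was stated first; without it there is no way to absorb the $|v|$ weight into a combination of $\lVert g\rVert_{2,u_*}^2$ and $\lVert\nabla g\rVert_{2,u_*}^2$. One must also be a little careful that $\lVert\nabla g\rVert_{2,u_*}$ (rather than $\lVert\nabla g - v_g\rVert_{2,u_*}$) is what appears, so the upper bound in \eqref{32} --- valid precisely because $v_g$ and $u_*$ are colinear here --- is essential; this is the payoff of working with the projected state $G_{u_*}$ rather than $G_{u_\infty}$. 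The remaining steps are routine applications of Cauchy--Schwarz and collecting constants.
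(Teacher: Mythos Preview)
Your argument is correct and follows essentially the same route as the paper. The only notable difference is in the second term: the paper controls $\left|\int g^2 v\,G_{u_*}\,dv\right|$ via a H\"older $\tfrac{1}{6}$--$\tfrac{5}{6}$ split before invoking \Cref{lemmemom}, whereas you use the cruder pointwise bound $|v|\le 1+|v|^6$; both yield a constant times $Q_{2,u_*}[g]$ after applying \eqref{30}--\eqref{32}, so the difference is only in the value of $B_D$.
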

\begin{proof}
        We will control both of the two terms of 
        \begin{align*}
              R[g]&=D^2 \mathbf{v}_g  \cdot \int_{\mathbb{R}^d}g(\nabla g- \mathbf{v}_g)G_{*}\,dv -\frac{\mathbf{u}_*'}{2}\cdot \int_{\mathbb{R}^d}g^2\,v\,G_{*}\,dv.
        \end{align*}
Let us compute  $ \mathbf{v}_g \cdot \int_{\mathbb{R}^d}\,g\,(\nabla g- \mathbf{v}_g)\,G_{{*}}$. 
Using \eqref{30} and \eqref{31},
\begin{align*}
        \left |D^2 \mathbf{v}_g \cdot \int_{\mathbb{R}^d}g(\nabla g- \mathbf{v}_g)G_{{*}}\,dv \right|&\leq D^2|\mathbf{v}_g| \lVert g \rVert_{2,\mathbf{u}_*} \lVert \nabla g -\mathbf{v}_g  \rVert_{2,\mathbf{u}_*}\\
        &  \leq \frac{D^2}{\eta(D) \beta(D)} |\mathbf{v}_g|Q_{2,\mathbf{u}_{*}}[g].\\
    \end{align*}
Let us compute $\int_{\mathbb{R}^d}g^2vG_{\mathbf{u}_*}dv.$
\begin{align}
\label{eenormeq}
   \left| \int_{\mathbb{R}^d}g^2vG_{{*}}dv \right | &\leq  \left( \int_{\mathbb{R}^d}g^2 |v|^6 G_{{*}}\,dv\right)^\frac{1}{6} \left( \int_{\mathbb{R}^d}g^2 G_{{*}}\,dv\right)^\frac{5}{6} \notag \\
   &\leq \left( \frac{8D^2}{\alpha}\right)^{\frac{1}{6}} \left( C_D\int_{\mathbb{R}^d}g^2  G_{{*}}dv+ \int_{\mathbb{R}^d}|\nabla g|^2  G_{{*}}dv\right)^\frac{1}{6} \left( \int_{\mathbb{R}^d}g^2 G_{{*}} dv\right)^\frac{5}{6}  \notag \\
   & \leq \gamma(D)Q_{2,\mathbf{u}_*}[g]
\end{align}
where 
\begin{align*}
    \gamma(D)=\left( \frac{8D^2}{\alpha}\right)^{\frac{1}{6}} \left[\left(\frac{C_D}{\eta(D)^2 \beta(D)^2}+b(D)^2\right)^{\frac{1}{6}} \left(\frac{1}{\eta(D)^2 \beta(D)^2} \right)^{\frac{5}{6}}\right].
\end{align*}
Here, we used \eqref{30}, \eqref{31}, \eqref{32} and  \Cref{lemmemom}.
Hence, for the second term of $R[g]$, we have
\begin{align*}
    \left|\frac{\mathbf{u}_*'}{2}\cdot \int_{\mathbb{R}^d}g^2vG_{*}dv\right| \leq \frac{\gamma(D) }{2} |\mathbf{u}_*'|Q_{2,\mathbf{u}_*}[g].
\end{align*}
This completes the proof of \eqref{eq45} with constant 
$$B_D:= \frac{D^2}{\eta(D) \beta(D)}+ \frac{\gamma(D)}{2}.$$
\Cref{controlereste} follows from \eqref{eq45}, using  \Cref{contoleJ_*} and the inequality 
\begin{equation*}
    |\mathbf{v}_g| \leq \frac{1}{D\eta(D)} W_2^{1/2} Q_{1,\mathbf{u}_*}[g]^{1/2}. \qedhere
\end{equation*}
\end{proof}

\end{subsection}

\begin{subsection}{Proof of  \texorpdfstring{\Cref{thme1}}{thme1} and \texorpdfstring{\Cref{thm3}}{thm3}.}
\label{subsec: preuvethme2}
Let us start by proving the local exponential decay of the solution $g$ of \eqref{evoldegstar} when $g$ is small.

 \begin{proposition}
\label{thme1}
Under the assumptions of \Cref{thm3} there exist some constants ~$A_D>0$ and \mbox{~$K_D>0$} such that if \mbox{$Q_{1,\mathbf{u}_*(0)}[g_{\mathrm{in}}]<A_D$}, then
\begin{align*}
    {Q_{1,\mathbf{u}_*}[g]}\leq\frac{Q_{1,\mathbf{u}_*}[g_{\mathrm{in}}]}{\left(1-K_D Q_{1,\mathbf{u}_*}[g_{\mathrm{in}}]^{\frac{1}{2}}\right)^2} e^{-2\beta(D)^2t} \, \, \, \forall t \geq 0
\end{align*}
 where $\beta(D)$ is defined in \Cref{lemmeXingyu}.
\end{proposition}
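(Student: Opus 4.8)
The plan is to run a Grönwall argument on the quantity $Q_{1,u_*}[g(t,\cdot)]$, using the differential identity from \Cref{deriveps} together with the coercivity and nonlinearity estimates established above. Starting from
\[
\frac{1}{2}\frac{d}{dt}\langle g,g\rangle_{u_*} = -Q_{2,u_*}[g] + R[g],
\]
I would first need a preliminary step that is glossed over in the statement but is essential: one must check that $u_f(t)\in\mathcal{N}$ for \emph{all} $t\ge 0$ (not just large $t$), so that \eqref{controlereste} from \Cref{controlereste1} applies. This is where the smallness hypothesis $Q_{1,u_*(0)}[g_{\mathrm{ini}}]<A_D$ enters. A continuity/bootstrap argument: as long as $Q_{1,u_*}[g]$ stays below a fixed threshold, \eqref{31} controls $\lVert g\rVert_{2,u_*}^2$ and hence $|u_f-u_*|$ and $\mathrm{dist}(u_f,\mathcal{S})$, keeping $u_f$ inside $\mathcal{N}$; one then shows $Q_{1,u_*}[g]$ is in fact \emph{decreasing}, so it never leaves the region, closing the bootstrap. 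Choosing $A_D$ small enough makes this work.

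Granting that, the core estimate is
\[
\frac{1}{2}\frac{d}{dt}\langle g,g\rangle_{u_*} \le -Q_{2,u_*}[g] + K_D\, Q_{2,u_*}[g]\, Q_{1,u_*}[g]^{1/2} = -Q_{2,u_*}[g]\bigl(1-K_D\, Q_{1,u_*}[g]^{1/2}\bigr)
\]
by \eqref{controlereste}. Now there is a subtlety: $\langle g,g\rangle_{u_*}=Q_{1,u_*}[g]$ but $u_*$ depends on $t$, so $\frac{d}{dt}Q_{1,u_*}[g]$ is not simply $\frac{d}{dt}\langle g,g\rangle_{u_*}$ unless the $u_*$-derivative terms have already been absorbed — but they have, precisely in the derivation of \Cref{deriveps}, whose left-hand side is the \emph{total} derivative $\frac{1}{2}\frac{d}{dt}\langle g,g\rangle_{u_*}$ already including the $\nabla_u\langle g,g\rangle_u\cdot u_*'$ contribution. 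So I can write $y(t):=Q_{1,u_*(t)}[g(t,\cdot)]$ and use $\frac12 y' \le -Q_{2,u_*}[g](1-K_D y^{1/2})$. Applying the coercivity inequality \eqref{30}, $Q_{2,u_*}[g]\ge \beta(D)^2 Q_{1,u_*}[g]=\beta(D)^2 y$, and noting $1-K_D y^{1/2}>0$ on the bootstrap region (shrink $A_D$ so that $K_D A_D^{1/2}<1$), we get
\[
y'(t) \le -2\beta(D)^2\bigl(1-K_D\, y(t)^{1/2}\bigr)\, y(t).
\]

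The final step is to integrate this scalar ODE inequality. Since $1-K_D y^{1/2}\ge 1-K_D y(0)^{1/2}$ when $y$ is decreasing, one gets the crude bound $y'\le -2\beta(D)^2(1-K_D y(0)^{1/2})y$, giving exponential decay with a rate slightly worse than $2\beta(D)^2$; to recover the sharp constant $2\beta(D)^2$ in the exponent with the stated prefactor $(1-K_D Q_{1,u_*}[g_{\mathrm{ini}}]^{1/2})^{-2}$, I would instead substitute $z=y^{1/2}$ (valid while $y>0$), obtaining $z' \le -\beta(D)^2(1-K_D z)z$, then separate variables: $\frac{d}{dt}\log\frac{z}{1-K_D z}\le -\beta(D)^2$, which integrates cleanly to $\frac{z(t)}{1-K_D z(t)}\le \frac{z(0)}{1-K_D z(0)}e^{-\beta(D)^2 t}$, and solving back for $z(t)$ and squaring yields exactly the claimed bound. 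The main obstacle is the bootstrap in the first step — making the constants $A_D$ consistent across the requirements that $u_f$ stay in $\mathcal{N}$, that $K_D y^{1/2}<1$, and that $y$ be monotone — but this is routine once one fixes the order of the smallness choices.
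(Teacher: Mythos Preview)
Your proposal is correct and follows essentially the same approach as the paper: the bootstrap to keep $u_f\in\mathcal{N}$, the differential inequality from \Cref{deriveps} combined with \eqref{controlereste} and \eqref{30}, and the integration of $y'\le -2\beta(D)^2 y(1-K_D y^{1/2})$. In fact your write-up is slightly more explicit than the paper's, which simply sets $A_D=\min\{\delta\,\eta(D)W_2^{-1/2},\,K_D^{-2}\}$ and asserts that the final ODE inequality ``can be integrated''; your substitution $z=y^{1/2}$ and the logarithmic separation of variables are exactly what makes that step rigorous.
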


\begin{proof}[Proof of \Cref{thme1}]
Let $\epsilon>0$ and $\delta $ as in  \Cref{P-L}.
  Let \begin{align*}
      A_D:= \min\left\{\delta \,\eta(D)W_2^{-\frac{1}{2}},K_D^{-2}\right\} .
  \end{align*} 
First of all, let us check that $g$ satisfies the assumption under which   \eqref{controlereste} applies, that is 
\begin{align*}
    |\mathbf{u}_{f}(t)-\mathbf{u}_*(t)| \leq \delta  \, \,\forall t \geq 0.
\end{align*}
We have
\begin{align*}
    |\mathbf{u}_{f_{\mathrm{in}}}-\mathbf{u}_*|&= \left | \int_{\mathbb{R}^d }v\,g_{\mathrm{in}}\,G_{*}\,dv\right|
    \leq \frac{W_2^{\frac{1}{2}}}{\eta(D)}\,Q_{1,\mathbf{u}_*}[g_{\mathrm{in}}]<\delta.
    \end{align*}
 \\
For all  $t \geq 0$ such that $|\mathbf{u}_{f}(t)-\mathbf{u}_*(t)|<\delta $,
using \Cref{deriveps} and \eqref{controlereste}, we have:
 \begin{align*}
     \frac{d}{dt} Q_{1,\mathbf{u}_*}[g(t,.)]&= -\,2 Q_{2,\mathbf{u}_*}[g(t,.)]+2 R[g(t,.)] \\
     &\leq -\,2 Q_{2,\mathbf{u}_*}[g]+2 K_D Q_{2,\mathbf{u}_*}[g]  Q_{1,\mathbf{u}_*}[g]^{\frac{1}{2}} 
     \leq -\,2 Q_{2,\mathbf{u}_*}[g]\left(1-K_D  Q_{1,\mathbf{u}_*}[g]^{\frac{1}{2}} \right).
 \end{align*}
 Since $  Q_{1,\mathbf{u}_*}[g_{\mathrm{in}}]<A_D \leq \frac{1}{K_D^2}$,
 we obtain that $Q_{1,\mathbf{u}_*}[g]$ is nonincreasing hence, for $t\geq0$,
 \begin{align*}
    |\mathbf{u}_{f_{}}(t)-\mathbf{u}_*(t)|&= \left | \int_{\mathbb{R}^d }vg_{}G_{\mathbf{u}}dv\right|
    \leq \frac{W_2^{\frac{1}{2}}}{\eta(D)}Q_{1,\mathbf{u}_*}[g_{}] \leq\frac{W_2^{\frac{1}{2}}}{\eta(D)}Q_{1,\mathbf{u}_*}[g_{\mathrm{in}}]<\delta .
    \end{align*}
We conclude that $\mathbf{u}_{f}(t) \in \mathcal{N}$ for all time $t\geq 0$. Hence,
 for all $t\geq 0 $ we have
\begin{align*}
    \frac{d}{dt} Q_{1,\mathbf{u}_*}[g(t,.)]\leq -\,2 \beta(D)^2\,Q_{1,\mathbf{u}_*}[g(t,.)]\left(1-K_D  Q_{1,\mathbf{u}_*}[g(t,.)]^{\frac{1}{2}}\right),
\end{align*}
where we used \Cref{lemmeXingyu}. This differential inequality can be integrated with respect to $t$, which shows that 
\begin{equation*}
    Q_{1,\mathbf{u}_*}[g] \leq \frac{Q_{1,\mathbf{u}_*}[g_{\mathrm{in}}]}{\left(1 - K_D\, Q_{1,\mathbf{u}_*}[g_{\mathrm{in}}]^{1/2} \right)^2} \, e^{-2\beta(D)^2 t} \,\,\, \forall t \geq 0 \qedhere
\end{equation*}
\end{proof}
   \begin{proof}[Proof of \Cref{thm3}.]
   We recall that $g(t) \in H^1(\mathbb{R}^d,G_{*}(t))$ for all $t > 0$ thanks to \Cref{lemmestar}.
     First of all, let us prove that: 
   \begin{align*}
       \lim_{t\rightarrow \infty} Q_{1,\mathbf{u}_{*}(t)}[g(t)]=0.
   \end{align*}
 We first have, by \Cref{lemmejfvois}
\begin{align}
\label{limvg}
    \lim_{t\rightarrow \infty}D|\,\mathbf{v}_g(t)|=\lim_{t \rightarrow \infty}\left|\mathbf{u}_{f}(t)-\mathbf{u}_*(t)\right|=\lim_{t\rightarrow \infty}\mathrm{dist}(\mathbf{u}_{f}, \mathcal{S})  =0.
\end{align}
For $t$ big enough, $|\mathbf{u}_{f}(t)| \geq r(D)/2$ and we know from \eqref{deriv0} that $\lim_{t\rightarrow}\mathbf{u}_{f}'(t)=0$. Hence, using the inequality $ |\mathbf{u}_*'(t)| \leq 2{|\mathbf{u}_{f}'|}/{|\mathbf{u}_{f}|}$, we obtain by \Cref{lemmejfvois}
 \begin{align}
 \label{limuprime}
     \lim_{t\rightarrow \infty }|\mathbf{u}_*'(t)| =0.
 \end{align}
 Thanks to \eqref{limvg}, \eqref{limuprime} and \eqref{eq45}, we conclude that
  for all $\epsilon \in  (0,1)$, there exists a time $t_0>0$ such that for all  $t\geq t_0$, we have $R[g(t,.)]\leq \epsilon \, Q_{2,\mathbf{u}_*}[g(t,.)] $, and 
\begin{align*}
     \frac{d}{dt} Q_{1,\mathbf{u}_*}[g(t,.)]&= -\,2 Q_{2,\mathbf{u}_*}[g(t,.)]+2 R[g(t,.)] \\& \leq  -\,2(1-\epsilon )  Q_{2,\mathbf{u}_*}[g(t,.)]
     \leq -\,2\beta(D)^2(1 -\epsilon )Q_{1,\mathbf{u}_*}[g(t,.)].
\end{align*}
As a consequence, $Q_{1,\mathbf{u}_*}[g] $ converges to $0$. Hence for $t_0$ big enough, we can apply \Cref{thme1} to~$Q_{1,\mathbf{u}_*(t_0)}[g(t_0+.)]$ and we obtain
\begin{align*}
    {Q_{1,\mathbf{u}_*}[g(t)]}\leq\frac{Q_{1,\mathbf{u}_*}[g(t_0)]}{\left(1-K_D \,Q_{1,\mathbf{u}_*}[g{(t_0)}]^{\frac{1}{2}}\right)^2} \,e^{-2\beta(D)^2(t-t_0)} \text{ for all } t\geq t_0.
\end{align*}
   Using the inequality
   \begin{align*}
        f\log\left(\frac{f}{G_\mathbf{u}}\right)-(f-G_\mathbf{u}) \leq \frac{(f-G_\mathbf{u})^2}{G_\mathbf{u}},
   \end{align*}
   we obtain that for all $t\geq 0$
\begin{align}
\label{eq25}
    \mathcal{H}[f(t,.)|G_{*}(t)] \leq \int_{\mathbb{R}^d}|f-G_{*}|^2G_{*}^{-1}dv=\lVert g \rVert_{2,\mathbf{u}_*}^2 \leq \frac{1}{\eta(D)^2} Q_{1,\mathbf{u}_*}[g(t,.)] .
\end{align}
Let $\epsilon>0$ and $\mathcal{N}$ a neighborhood of $\mathcal{S}$ where \eqref{P_L_M_B} applies.  Using \Cref{lemmejfvois}, there exists a time $t_0 >0$, such that $\mathbf{u}_{f}(t) \in\mathcal{N}$ for all $t \geq t_0$. Using \eqref{equtile}, \eqref{eq22}  and \eqref{eq25} we have for $t\geq t_0$:
\begin{align}
\label{eq2}
    |\mathbf{u}_{f}(t)-\mathbf{u}_*(t)|^2 \leq \frac{2}{\mu}(\mathcal{V}(\mathbf{u}_{f})-\mathcal{V}_*)  \leq \frac{2}{\mu}(\mathcal{F}[f]-\mathcal{F}_*) \leq \frac{2D}{\mu}\mathcal{H}[f|G_{\mathbf{u}_*}] \leq \frac{2D}{\mu\eta(D)^2} Q_{1,\mathbf{u}_*}[g(t,.)].
\end{align}
Using \Cref{contoleJ_*}, we obtain for $t\geq t_0$
 \begin{align}
    \label{eq1}
         |\mathbf{u}_*(t)-\mathbf{u}_{\infty}|^2&\leq \left|\int_{t}^{\infty}\mathbf{u}_*'(s)ds\right|^2 \leq \frac{ 2\alpha W_6^{\frac{1}{2}}}{\beta(D)^2r(D)  \eta(D)} \frac{Q_{1,\mathbf{u}_*}[g(t_0)]}{\left(1-K_D Q_{1,\mathbf{u}_*}[g(t_0)]^{\frac{1}{2}}\right)^2} e^{-2\beta(D)^2(t-t_0)}.
    \end{align}
 Using \eqref{eq1} and  \eqref{eq2}, we obtain for $ t\geq t_0$
\begin{align}
\label{eqfin}
    |\mathbf{u}_{f}(t)-\mathbf{u}_{\infty}|^2 & \leq 2\left(|\mathbf{u}_{f}-\mathbf{u}_*|^2+|\mathbf{u}_*-\mathbf{u}_{\infty}|^2\right) \notag\\
    & \leq 2\left(\frac{2D}{\mu \eta(D)^2}+\frac{ 2\alpha W_6^{\frac{1}{2}}}{\beta(D)^2r(D)  \eta(D)} \right) \frac{Q_{1,\mathbf{u}_*}[g(t_0)]}{\left(1-K_D Q_{1,\mathbf{u}_*}[g(t_0)]^{\frac{1}{2}}\right)^2} e^{-2\beta(D)^2(t-t_0)}.
\end{align}
After taking into account \eqref{eq22} with $\mathbf{u}=\mathbf{u}_{\infty}$, \eqref{eqfin} and \eqref{eq2}, we obtain for $t\geq t_0$, 
\begin{align*}
    \mathcal{H}[f(t,.)|G_{\mathbf{u}_{\infty}}] &\leq \frac{1}{D}\left(\mathcal{F}[f]-\mathcal{F}_* +\frac{1}{2}|\mathbf{u}_{f}-\mathbf{u}_{\infty}|^2 \right)\\
    &\leq \left(\frac{1}{ \eta(D)^2}+\left(\frac{2}{\mu \eta(D)^2}+\frac{ 2\alpha W_6^{\frac{1}{2}}}{D\beta(D)^2r(D)  \eta(D)} \right) \right) \frac{Q_{1,\mathbf{u}_*}[g(t_0)]}{\left(1-K_D Q_{1,\mathbf{u}_*}[g(t_0)]^{\frac{1}{2}}\right)^2} e^{-2\beta(D)^2(t-t_0)}.
\end{align*}
This concludes the proof of  \Cref{thm3}.

\end{proof}
\end{subsection}
\end{section}
\bigskip\noindent{\bf Acknowledgements.}
This work has been supported by the Project~\emph{Conviviality} (ANR-23-CE40-0003) of the French National Research Agency. The author would like to thank Jean Dolbeault and Amic Frouvelle for their guidance and helpful discussions throughout this research, as well as the anonymous referee for valuable comments and suggestions.\\[4pt]

{\scriptsize\copyright\,\the\year\ by the authors. This paper may be reproduced, in its entirety, for non-commercial purposes. }

\bibliographystyle{plain}         
\bibliography{bibliography}

@article{Li2021,
	author = {Li, Xingyu},
	doi = {10.1137/21M1399877},
	fjournal = {Multiscale Modeling \& Simulation. A SIAM Interdisciplinary Journal},
	issn = {1540-3459,1540-3467},
	journal = {Multiscale Model. Simul.},
	mrclass = {35B40 (35P15 35Q92)},
	mrnumber = {4344428},
	number = {4},
	pages = {1760--1783},
	title = {Phase transition and asymptotic behavior of flocking {C}ucker-{S}male model},
	url = {https://doi.org/10.1137/21M1399877},
	volume = {19},
	year = {2021},
	bdsk-url-1 = {https://doi.org/10.1137/21M1399877}}

@incollection{Frouvelle2021,
	author = {Frouvelle, Amic},
	booktitle = {Recent advances in kinetic equations and applications},
	doi = {10.1007/978-3-030-82946-9\_7},
	isbn = {978-3-030-82945-2; 978-3-030-82946-9},
	mrclass = {70E55},
	mrnumber = {4437189},
	pages = {147--181},
	publisher = {Springer, Cham},
	series = {Springer INdAM Ser.},
	title = {Body-attitude alignment: first order phase transition, link with rodlike polymers through quaternions, and stability},
	url = {https://doi.org/10.1007/978-3-030-82946-9_7},
	volume = {48},
	year = {[2021] \copyright 2021},
	bdsk-url-1 = {https://doi.org/10.1007/978-3-030-82946-9_7},
	bdsk-url-2 = {https://doi.org/10.1007/978-3-030-82946-9%5C_7}}

@article{carrillo2003kinetic,
	author = {Carrillo, Jos\'e{} A. and McCann, Robert J. and Villani, C\'edric},
	doi = {10.4171/RMI/376},
	fjournal = {Revista Matem\'atica Iberoamericana},
	issn = {0213-2230},
	journal = {Rev. Mat. Iberoamericana},
	mrclass = {35K55 (35B40 35K65 76T25)},
	mrnumber = {2053570},
	mrreviewer = {Thomas\ P.\ Witelski},
	number = {3},
	pages = {971--1018},
	title = {Kinetic equilibration rates for granular media and related equations: entropy dissipation and mass transportation estimates},
	url = {https://doi.org/10.4171/RMI/376},
	volume = {19},
	year = {2003},
	bdsk-url-1 = {https://doi.org/10.4171/RMI/376}}

@article {bolley2010stochastic,
    AUTHOR = {Bolley, Fran\c cois and Ca\~nizo, Jos\'e{} A. and Carrillo,
              Jos\'e{} A.},
     TITLE = {Stochastic mean-field limit: non-{L}ipschitz forces and
              swarming},
   JOURNAL = {Math. Models Methods Appl. Sci.},
  FJOURNAL = {Mathematical Models and Methods in Applied Sciences},
    VOLUME = {21},
      YEAR = {2011},
    NUMBER = {11},
     PAGES = {2179--2210},
      ISSN = {0218-2025,1793-6314},
   MRCLASS = {82C40 (82C22)},
  MRNUMBER = {2860672},
MRREVIEWER = {Giuseppe\ Maria\ Coclite},
       DOI = {10.1142/S0218202511005702},
       URL = {https://doi.org/10.1142/S0218202511005702},
}

@incollection{bakry1985diffusions,
	author = {Bakry, D. and \'Emery, Michel},
	booktitle = {S\'eminaire de probabilit\'es, {XIX}, 1983/84},
	doi = {10.1007/BFb0075847},
	isbn = {3-540-15230-X},
	mrclass = {60J60 (58C40 58G32)},
	mrnumber = {889476},
	mrreviewer = {Jacques\ Vauthier},
	pages = {177--206},
	publisher = {Springer, Berlin},
	series = {Lecture Notes in Math.},
	title = {Diffusions hypercontractives},
	url = {https://doi.org/10.1007/BFb0075847},
	volume = {1123},
	year = {1985},
	bdsk-url-1 = {https://doi.org/10.1007/BFb0075847}}

@article{holley1987logarithmic,
	author = {Holley, Richard and Stroock, Daniel},
	doi = {10.1007/BF01011161},
	fjournal = {Journal of Statistical Physics},
	issn = {0022-4715,1572-9613},
	journal = {J. Statist. Phys.},
	mrclass = {82A68 (82A31)},
	mrnumber = {893137},
	number = {5-6},
	pages = {1159--1194},
	title = {Logarithmic {S}obolev inequalities and stochastic {I}sing models},
	url = {https://doi.org/10.1007/BF01011161},
	volume = {46},
	year = {1987},
	bdsk-url-1 = {https://doi.org/10.1007/BF01011161}}

@article{BarbaroCanizoDegondphasetransi,
	author = {Barbaro, Alethea B. T. and Ca\~nizo, Jos\'e{} A. and Carrillo, Jos\'e{} A. and Degond, Pierre},
	doi = {10.1137/15M1043637},
	fjournal = {Multiscale Modeling \& Simulation. A SIAM Interdisciplinary Journal},
	issn = {1540-3459,1540-3467},
	journal = {Multiscale Model. Simul.},
	mrclass = {35F20 (35F25 35Q92)},
	mrnumber = {3541988},
	mrreviewer = {Jana\ Kopfova},
	number = {3},
	pages = {1063--1088},
	title = {Phase transitions in a kinetic flocking model of {C}ucker-{S}male type},
	url = {https://doi.org/10.1137/15M1043637},
	volume = {14},
	year = {2016},
	bdsk-url-1 = {https://doi.org/10.1137/15M1043637}}

@article{Pareschinumerique,
	author = {Pareschi, Lorenzo and Zanella, Mattia},
	doi = {10.1007/s10915-017-0510-z},
	fjournal = {Journal of Scientific Computing},
	issn = {0885-7474,1573-7691},
	journal = {J. Sci. Comput.},
	mrclass = {65M06 (35Q84 65M12)},
	mrnumber = {3767820},
	mrreviewer = {Jose\ Alberto\ Cuminato},
	number = {3},
	pages = {1575--1600},
	title = {Structure preserving schemes for nonlinear {F}okker-{P}lanck equations and applications},
	url = {https://doi.org/10.1007/s10915-017-0510-z},
	volume = {74},
	year = {2018},
	bdsk-url-1 = {https://doi.org/10.1007/s10915-017-0510-z}}

@article{FrouvelleLIU2012,
	author = {Frouvelle, Amic and Liu, Jian-Guo},
	doi = {10.1137/110823912},
	fjournal = {SIAM Journal on Mathematical Analysis},
	issn = {0036-1410,1095-7154},
	journal = {SIAM J. Math. Anal.},
	mrclass = {82C26 (35K55 35Q84)},
	mrnumber = {2914250},
	mrreviewer = {Madalina\ Deaconu},
	number = {2},
	pages = {791--826},
	title = {Dynamics in a kinetic model of oriented particles with phase transition},
	url = {https://doi.org/10.1137/110823912},
	volume = {44},
	year = {2012},
	bdsk-url-1 = {https://doi.org/10.1137/110823912}}

@article{CuckerSmale2007,
	author = {Cucker, Felipe and Smale, Steve},
	doi = {10.1109/TAC.2007.895842},
	fjournal = {Institute of Electrical and Electronics Engineers. Transactions on Automatic Control},
	issn = {0018-9286,1558-2523},
	journal = {IEEE Trans. Automat. Control},
	mrclass = {91C20 (34C60 91D99 92D50)},
	mrnumber = {2324245},
	mrreviewer = {Vadim\ Komkov},
	number = {5},
	pages = {852--862},
	title = {Emergent behavior in flocks},
	url = {https://doi.org/10.1109/TAC.2007.895842},
	volume = {52},
	year = {2007},
	bdsk-url-1 = {https://doi.org/10.1109/TAC.2007.895842}}

@article{Vicsek1995,
	author = {Vicsek, Tam\'as and Czir\'ok, Andr\'as and Ben-Jacob, Eshel and Cohen, Inon and Shochet, Ofer},
	doi = {10.1103/PhysRevLett.75.1226},
	fjournal = {Physical Review Letters},
	issn = {0031-9007,1079-7114},
	journal = {Phys. Rev. Lett.},
	mrclass = {82C22 (39A60 82C26 82C27 82C70 92D50)},
	mrnumber = {3363421},
	number = {6},
	pages = {1226--1229},
	title = {Novel type of phase transition in a system of self-driven particles},
	url = {https://doi.org/10.1103/PhysRevLett.75.1226},
	volume = {75},
	year = {1995},
	bdsk-url-1 = {https://doi.org/10.1103/PhysRevLett.75.1226}}

@article{JordanOttoKinder,
	author = {Jordan, Richard and Kinderlehrer, David and Otto, Felix},
	doi = {10.1137/S0036141096303359},
	fjournal = {SIAM Journal on Mathematical Analysis},
	issn = {0036-1410,1095-7154},
	journal = {SIAM J. Math. Anal.},
	mrclass = {35Q99 (35A15 49J99 60J60 82C31)},
	mrnumber = {1617171},
	mrreviewer = {Thierry\ Goudon},
	number = {1},
	pages = {1--17},
	title = {The variational formulation of the {F}okker-{P}lanck equation},
	url = {https://doi.org/10.1137/S0036141096303359},
	volume = {29},
	year = {1998},
	bdsk-url-1 = {https://doi.org/10.1137/S0036141096303359}}

@book{Villaniopttranspbook,
	author = {Villani, C\'edric},
	doi = {10.1090/gsm/058},
	isbn = {0-8218-3312-X},
	mrclass = {90-02 (28D05 35B65 35J60 49N90 49Q20 90B20)},
	mrnumber = {1964483},
	pages = {xvi+370},
	publisher = {American Mathematical Society, Providence, RI},
	series = {Graduate Studies in Mathematics},
	title = {Topics in optimal transportation},
	url = {https://doi.org/10.1090/gsm/058},
	volume = {58},
	year = {2003},
	bdsk-url-1 = {https://doi.org/10.1090/gsm/058}}

@article {bostan2013asymptotic,
    AUTHOR = {Bostan, Mihai and Carrillo, Jose Antonio},
     TITLE = {Asymptotic fixed-speed reduced dynamics for kinetic equations
              in swarming},
   JOURNAL = {Math. Models Methods Appl. Sci.},
  FJOURNAL = {Mathematical Models and Methods in Applied Sciences},
    VOLUME = {23},
      YEAR = {2013},
    NUMBER = {13},
     PAGES = {2353--2393},
      ISSN = {0218-2025,1793-6314},
   MRCLASS = {35Q83 (35B25 82C40 92C10 92D50)},
  MRNUMBER = {3109433},
       DOI = {10.1142/S0218202513500346},
       URL = {https://doi.org/10.1142/S0218202513500346},
}

@article {Tugaut2014,
    AUTHOR = {Tugaut, Julian},
     TITLE = {Phase transitions of {M}c{K}ean-{V}lasov processes in
              double-wells landscape},
   JOURNAL = {Stochastics},
  FJOURNAL = {Stochastics. An International Journal of Probability and
              Stochastic Processes},
    VOLUME = {86},
      YEAR = {2014},
    NUMBER = {2},
     PAGES = {257--284},
      ISSN = {1744-2508,1744-2516},
   MRCLASS = {60H10 (60G10 60J60 65C05 82C22)},
  MRNUMBER = {3180036},
MRREVIEWER = {Patr\'icia\ Gon\c calves},
       DOI = {10.1080/17442508.2013.775287},
       URL = {https://doi.org/10.1080/17442508.2013.775287},
}

%\printbibliography  % Affiche la bibliographie numérotée à la fin

\end{section}
\end{document}